\documentclass[11pt,a4paper,reqno]{amsart}
\usepackage[a4paper,top=3cm,bottom=3cm,inner=3cm,outer=3cm,includehead,includefoot]{geometry}
\usepackage{amsmath,amssymb,amsthm,calc,verbatim,mathrsfs,paralist,tikz}
\usetikzlibrary{shapes.misc,calc,intersections,patterns}

\newtheorem{theorem}{Theorem}
\newtheorem{lemma}[theorem]{Lemma}

\newtheorem{corollary}[theorem]{Corollary}

\def\E{\mathbb{E}}
\def\N{\mathbb{N}}
\def\P{\mathbb{P}}
\def\R{\mathbb{R}}
\def\T{\mathbb{T}}
\def\Z{\mathbb{Z}}
\def\ex{\mathrm{ex}}
\renewcommand{\leq}{\leqslant}
\renewcommand{\geq}{\geqslant}
\renewcommand{\to}{\rightarrow}
\newcommand{\pos}{\mathcal{P}}
\renewcommand{\neg}{\mathcal{N}}
\newcommand{\fixed}{\mathcal{Z}}
\newcommand{\free}{\mathcal{F}}
\DeclareMathOperator{\Po}{Po}

\title[The time of b.p. with dense initial sets for all thresholds]{The time of bootstrap percolation with dense initial sets for all thresholds}

\author[B. Bollob\'as]{B\'ela Bollob\'as}
\address{Trinity College, Cambridge, CB2 1TQ, UK, and Department of Mathematical Sciences, University of Memphis, Memphis, TN 38152, USA}
\email{b.bollobas@dpmms.cam.ac.uk}

\author[P.J. Smith]{Paul Smith}
\address{Department of Pure Mathematics and Mathematical Statistics, Wilberforce Road, Cambridge, CB3 0WA, UK}
\email{p.j.smith@dpmms.cam.ac.uk}

\author[A.J. Uzzell]{Andrew J. Uzzell}
\address{Department of Mathematical Sciences, University of Memphis, Memphis, TN 38152, USA}
\email{ajuzzell@memphis.edu}

\date{\today}

\thanks{The first and third authors were partially supported by ARO grant W911NF-06-1-0076 and by NSF grant DMS-0906634. The authors are grateful to Yuval Peres and other members of the Theory Group at Microsoft Research, Redmond, where this research was carried out.}

\subjclass[2010]{Primary 60K35; Secondary 60C05}
\keywords{Bootstrap percolation, concentration of measure}

\begin{document}

\begin{abstract}
We study the percolation time of the $r$-neighbour bootstrap percolation model on the discrete torus $(\Z/n\Z)^d$. For $t$ at most a polylog function of $n$ and initial infection probabilities within certain ranges depending on $t$, we prove that the percolation time of a random subset of the torus is exactly equal to $t$ with high probability as $n$ tends to infinity. Our proof rests crucially on three new extremal theorems that together establish an almost complete understanding of the geometric behaviour of the $r$-neighbour bootstrap process in the dense setting. The special case $d-r=0$ of our result was proved recently by Bollob\'as, Holmgren, Smith and Uzzell.
\end{abstract}

\maketitle

\section{Introduction}

Under \emph{$r$-neighbour bootstrap percolation} on a graph $G$, some of the vertices of $G$ are initially infected, and at each step, infected vertices stay infected, and uninfected vertices become infected if they have at least $r$ infected neighbours. Making this formal, there is a set $A=A_0\subset V(G)$, and for $t\geq 0$,
\[
A_{t+1} = A_t \cup \{v : |\Gamma(v)\cap A_t| \geq r\}.
\]
where $\Gamma(v)$ denotes the set of neighbours of $v$ in the graph $G$. We write $[A]:=\cup_{t=0}^\infty A_t$ for the \emph{closure} of $A$, and say that $A$ \emph{percolates} $G$ (or that \emph{percolation occurs}) if eventually every vertex of $G$ becomes infected; that is, if $[A] = V(G)$. The set $A$ is \emph{closed} if $[A]=A$.

Bootstrap percolation was introduced by Chalupa, Leath and Reich \cite{CLR} as a model for certain interacting particle systems in physics. Since then it has found applications in crack formation, clustering phenomena, dynamics of glasses \cite{GST}, sandpiles \cite{FLP}, the Ising model for ferromagnetism \cite{MorrisGlauber}, jamming \cite{DGLBD}, and many other areas of statistical mechanics and physics, as well as in neural networks \cite{TEneur,Amini}, computer science \cite{DRcomp,FLLPS}, and sociology \cite{Gran,Watts}.

There are two broad classes of questions one can ask about bootstrap percolation. The first, and the most extensively studied, is what happens when the initial configuration $A$ is chosen randomly? Fix a probability $p$ and let $A$ be a random subset of $V(G)$ in which vertices are included independently with probability $p$. One would like to know, for example, how likely percolation is to occur, and if it does occur, how long it takes.

The answer to the first of these questions is now well understood: on the lattice graph $[n]^d$, in which $d$ is fixed and $n$ tends to infinity, the probability of percolation under the $r$-neighbour model displays a sharp threshold between no percolation with high probability and percolation with high probability, meaning that there exists $p_c=p_c(n,d,r)$ such that for all $\epsilon>0$, if $p\geq(1+\epsilon)p_c$ then there is percolation with high probability, while if $p\leq (1-\epsilon)p_c$ then there is no percolation with high probability. The existence of thresholds in a certain weaker sense was proved in papers by Aizenman, Lebowitz, Cerf, Cirillo and Manzo \cite{AL,CerfCir,CerfManzo}, and in the strong sense just described in papers by Holroyd, Balogh, Bollob\'as, Duminil-Copin and Morris \cite{Hol,BBM3D,BBDCM}. Sharp thresholds have also been proved for the hypercube (Balogh and Bollob\'as \cite{BBhyp}, and Balogh, Bollob\'as and Morris \cite{BBMhigh}) and for several other bootstrap models on $\Z^d$ (Duminil-Copin and Holroyd \cite{DCH}, and Duminil-Copin and van Enter \cite{DCvE}).

If $p$ is large enough for percolation to occur with high probability, one would like to know how long percolation takes. In other words, what can one say about the random variable
\[
T = \min \{t \, : \, A_t=V(G) \}?
\]
(If percolation does not occur then we define $T=\infty$.) Asymptotics for $T$ have been obtained by Janson, {\L}uczak, Turova and Vallier \cite{JLTV} on the Erd\H{o}s-R\'enyi random graph $G(n,p)$. In a recent preprint, Balister, Bollob\'as and Smith \cite{BBS} study the question on $[n]^2$ with $r=2$. They prove concentration of $T$ up to a constant factor for all $p$ greater than the critical probability $p_c(n,2,2)$, and asymptotic concentration of $T$ for all $p$ above a larger threshold. Bollob\'as, Holmgren, Smith and Uzzell \cite{BHSU} study the question on the discrete torus $\T_n^d = (\Z/n\Z)^d$ under the $d$-neighbour model. They show that when the initial configuration is dense (in particular requiring $p=1-o(1)$), one can show considerably more than just asymptotic concentration. For $t=t(n)$ up to $o(\log n/\log\log n)$ (including constant $t$) they determine that for certain ranges of the initial probability $p=p(n)$ one can say that $T=t$ with high probability, while for other ranges one obtains $T\in\{t,t+1\}$ with high probability. In this paper we vastly extend these results: we show that there are corresponding one- and two-point concentration theorems for every threshold $2\leq r\leq 2d$. The previous results are the special case $r=d$ of the new results.

Before we state the new results, we need a small amount of notation. Let us write $q_n$ for $1-p_n$ and $q$ for $1-p$. Let
\begin{equation}\label{eq:mt}
m_{d,r}(t) = \sum_{i_0=0}^t \sum_{i_1=0}^{i_0} \dots \sum_{i_{d-r+1}=0}^{i_{d-r}} \binom{d}{i_{d-r+1}}
\end{equation}
for each $1\leq r\leq d$ and arbitrary $t$. This iterated sum, which will appear frequently throughout the paper, is the size of a certain set that is naturally associated with the geometry of the bootstrap process. We shall be more precise about what we mean by this later in the introduction. The following two theorems are our main results.

\begin{theorem}\label{th:main}
Let $d\geq r\geq 2$, let $t = t(n) = o((\log n/\log\log n)^{1/(d-r+1)})$, let $(p_n)_{n=1}^\infty$ be a sequence of probabilities, and let $\omega(n)\to\infty$. Under the standard $r$-neighbour rule on $\T_n^d$,
\begin{enumerate}
\item if $q_n \leq (n^{-d}/\omega(n))^{1/m_{d,r}(t)}$, then $\P_{p_n}(T\leq t)\to 1$ as $n\to\infty$;
\item if $q_n \geq (n^{-d}\omega(n))^{1/m_{d,r}(t)}$, then $\P_{p_n}(T\leq t)\to 0$ as $n\to\infty$.
\end{enumerate}
\end{theorem}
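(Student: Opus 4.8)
The plan is to reduce the theorem to a first/second‑moment computation that is driven entirely by the three extremal theorems, which supply all of the geometric input. For a site $v$, call a set $O$ an \emph{obstruction for $v$ at time $t$} if infecting the complement of $O$ (and nothing else) still leaves $v$ uninfected at time $t$; since infection spreads at unit speed and the process is monotone in the initial set, $\{v\notin A_t\}$ depends only on $U:=\T_n^d\setminus A$ restricted to the ball $B_t(v)$ of radius $t$ about $v$, and it is exactly the union, over the \emph{minimal} obstructions $O\subseteq B_t(v)$ for $v$, of the events $\{O\subseteq U\}$. The extremal theorems, applied to this situation, will tell us that: (E1) every obstruction for $v$ has at least $m_{d,r}(t)$ sites, where $m_{d,r}(t)$ is the iterated sum of~\eqref{eq:mt} (the size of the extremal ``hierarchical'' configuration); (E2) there is a minimal obstruction of size exactly $m_{d,r}(t)$, and only a bounded number $C=C(d,r)$ of them; and (E3) for each $j\ge0$ the number of minimal obstructions of size $m_{d,r}(t)+j$ is at most $C\kappa(t)^j$ for some $\kappa(t)$ polynomial in $t$. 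Finally, since $\{T\le t\}$ is decreasing in $U$, in each part we may assume $q_n$ is \emph{extremal} — $q_n=(n^{-d}/\omega(n))^{1/m_{d,r}(t)}$ in (i) and $q_n=(n^{-d}\omega(n))^{1/m_{d,r}(t)}$ in (ii) — and then, using $m_{d,r}(t)=\Theta_{d,r}(t^{d-r+1})$ together with $t^{d-r+1}=o(\log n/\log\log n)$, one checks that $q_n\to0$ so fast that $\kappa(t)q_n\to0$ and $m_{d,r}(t)^2q_n\to0$.

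For part~(i), write $m=m_{d,r}(t)$. Two nested union bounds, followed by (E1) and (E3), give
\[
\P(T>t)\ \le\ \sum_{v}\ \sum_{O\text{ minimal for }v}q_n^{|O|}\ \le\ n^d\sum_{j\ge0}C\kappa(t)^j q_n^{m+j}\ =\ \frac{C\,n^d q_n^{m}}{1-\kappa(t)q_n}.
\]
Since $n^d q_n^{m}=1/\omega(n)$ and $\kappa(t)q_n\to0$, the right‑hand side is $(1+o(1))C/\omega(n)\to0$, which is the claim.

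For part~(ii) we build obstructions rather than rule them out. Let $O^*$ be one fixed minimal obstruction of size $m=m_{d,r}(t)$ (its existence is part of (E2)), and for each $v$ let $A_v$ be the event $\{v+O^*\subseteq U\}$, so that $A_v\subseteq\{v\notin A_t\}$. Put $Y=\sum_v\mathbf 1_{A_v}$, so $\mu:=\E Y=n^d q_n^{m}=\omega(n)\to\infty$. For $v\ne w$ the events $A_v,A_w$ are independent unless $(v+O^*)\cap(w+O^*)\ne\emptyset$; in that case, since $O^*$ has diameter $o(n)$ it is not invariant under any nonzero translation, so $|(v+O^*)\cup(w+O^*)|\ge m+1$ and hence $\P(A_v\cap A_w)\le q_n^{m+1}$, while the number of such $w$ for a given $v$ is at most $|O^*-O^*|\le m^2$. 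Therefore $\operatorname{Var}(Y)\le\mu+n^d m^2 q_n^{m+1}=\mu(1+m^2q_n)$, and Chebyshev's inequality gives $\P(Y=0)\le\operatorname{Var}(Y)/\mu^2\le(1+m^2q_n)/\mu\to0$. Since $\{Y\ge1\}\subseteq\{T>t\}$, this yields $\P(T\le t)\to0$.

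Thus the entire difficulty sits in the extremal statements (E1)--(E3), proved in the sections that follow by a delicate induction — the nested summation in~\eqref{eq:mt} reflecting how a near‑extremal set is filled one coordinate direction at a time, with the innermost $\binom{d}{\,\cdot\,}$ coming from a one‑dimensional base case. The lower bound (E1) and the identification of the extremal configuration are already substantial, but the real obstacle is the stability statement (E3): without knowing that near‑optimal configurations are, up to a bounded choice, controlled perturbations of the extremal one — so that their number grows only polynomially per extra site — the crude bound $\binom{|B_t(v)|}{m+j}$ is far too large for the union bound in part~(i) to close over the full permitted range of $t$. We note finally that, just as in the case $r=d$ of~\cite{BHSU}, the second‑moment scheme above can be upgraded via the Chen--Stein method to a Poisson approximation giving the exact law of $T$ in the two critical windows; for Theorem~\ref{th:main} itself, however, the first and second moments computed above are enough.
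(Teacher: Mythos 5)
Your proof is correct, but it takes a genuinely different route from the paper's. The paper passes directly through the Stein--Chen method (Theorem~\ref{th:steinchen}): it estimates $\rho_1$ and $\rho_2$ in Lemma~\ref{le:rho} using exactly the same extremal input as your (E1)--(E3), feeds these into Barbour--Eagleson to conclude that $F(t,n)$ is approximately $\Po(\lambda(t,n))$, and then reads off $\P(T\leq t)=\P(F(t,n)=0)\sim e^{-\lambda(t,n)}$, which yields both directions of Theorem~\ref{th:main} at once since $\lambda(t,n)=\Theta(n^dq_n^{m_{d,r}(t)})$. You instead split the theorem: for (i) you use a union bound over minimal obstructions (a first-moment/Markov bound, overcounting configurations containing several minimal obstructions, which is harmless), and for (ii) you anchor a second-moment argument on a single fixed minimal obstruction $O^*$, getting $\E Y=n^dq_n^{m}\to\infty$ and $\operatorname{Var}(Y)/\mu^2\to0$ via Chebyshev. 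Two remarks on the bookkeeping: (a) your use of monotonicity to reduce to the extremal $q_n$ in each part is exactly what licenses the application of the size estimates (the paper's Lemma~\ref{le:tbound} plays the role of your ``$\kappa(t)q_n\to0$'' claim and is proved under condition \eqref{eq:qbound}, which the extremal $q_n$ satisfies); and (b) your (E3) for \emph{minimal} obstructions is a special case of the paper's count $g_{d,r}(t,k)=t^{O(k)}$ of \emph{all} protecting configurations of size $m+k$, so it does follow from Theorem~\ref{th:stability} as you assert. What each route buys: yours is more elementary and self-contained for Theorem~\ref{th:main} alone; the paper's does more or less the same computation once and gets the distributional approximation (Theorem~\ref{th:dist}) as a by-product, which is exactly what is needed for the one-point/two-point concentration and the explicit $\exp(-g_{d,r}c)$ asymptotics of Theorem~\ref{th:conc}, as your final sentence correctly anticipates.
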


In addition to Theorem \ref{th:main}, we prove that if the sequence $(q_n)_{n=1}^\infty$ satisfies certain bounds, then the percolation time $T$ is determined exactly, or that it takes one of two values, in each case with high probability as $n$ tends to infinity.

\begin{theorem}\label{th:conc}
Let $d\geq r\geq 2$, let $t=o((\log n/\log\log n)^{1/(d-r+1)})$, and let $(p_n)_{n=1}^\infty$ be a sequence of probabilities.
\begin{enumerate}
\item Suppose there exists $\omega(n)\to\infty$ such that
\[
(n^{-d}\omega(n))^{1/m_{d,r}(t-1)} \leq q_n \leq (n^{-d}/\omega(n))^{1/m_{d,r}(t)}.
\]
Then $T=t$ with high probability.
\item Suppose instead there exists a constant $C>0$ such that
\[
(n^{-d}/C)^{1/m_{d,r}(t)} \leq q_n \leq (Cn^{-d})^{1/m_{d,r}(t)}
\]
for all sufficiently large $n$. Then $T\in\{t,t+1\}$ with high probability. If, moreover, there exists a constant $c>0$ such that $q_n^{m_{d,r}(t)}n^d \to c$ as $n\to\infty$, then
\[
\P_{p_n}(T=t) \sim 1-\P_{p_n}(T=t+1) \sim \exp(-g_{d,r}c),
\]
where
\[
g_{d,r} = \binom{d}{d-r+1} 2^{r-1} d^{2(d-r+1)}.
\]
\end{enumerate}
\end{theorem}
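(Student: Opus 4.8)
The plan is to read parts~(i) and~(ii) — apart from the Poisson asymptotics — off Theorem~\ref{th:main} applied at the three levels $t-1$, $t$, $t+1$, and then to obtain the Poisson limit by sharpening the obstruction count underlying the proof of Theorem~\ref{th:main}. Write $M(s):=m_{d,r}(s)$; from \eqref{eq:mt}, $M$ is strictly increasing, $M(s)=\Theta(s^{d-r+1})$, and $\bigl(M(s)-M(s-1)\bigr)/M(s)=\Theta(1/s)$ as $s\to\infty$ (and is bounded below by a positive constant when $s$ is bounded). For part~(i), the claim $T=t$ with high probability is the conjunction of $\P_{p_n}(T\le t)\to1$ and $\P_{p_n}(T\le t-1)\to0$: the first is Theorem~\ref{th:main}(i) at level $t$ (the hypothesis supplies $q_n\le(n^{-d}/\omega(n))^{1/M(t)}$ directly), and the second is Theorem~\ref{th:main}(ii) at level $t-1$ (the hypothesis supplies $q_n\ge(n^{-d}\omega(n))^{1/M(t-1)}$, and $t-1$ also satisfies the polylogarithmic bound). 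For the first assertion of part~(ii), the window hypothesis forces $q_n=\Theta\!\bigl(n^{-d/M(t)}\bigr)$, whence
\[
q_n^{M(t+1)}n^d=\bigl(q_n^{M(t)}n^d\bigr)\,q_n^{M(t+1)-M(t)}=\Theta\!\left(n^{-d(M(t+1)-M(t))/M(t)}\right)=\exp\!\bigl(-\Theta(\log n/t)\bigr)\to0,
\]
using $t=o(\log n)$; symmetrically $q_n^{M(t-1)}n^d=\bigl(q_n^{M(t)}n^d\bigr)\,q_n^{-(M(t)-M(t-1))}\to\infty$. Choosing $\omega(n)\to\infty$ growing slowly enough and applying Theorem~\ref{th:main}(i) at level $t+1$ and Theorem~\ref{th:main}(ii) at level $t-1$ yields $\P_{p_n}(T\le t+1)\to1$ and $\P_{p_n}(T\le t-1)\to0$, i.e.\ $T\in\{t,t+1\}$ with high probability. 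The second of these estimates also gives, in the Poisson regime, $\P_{p_n}(T=t)=\P_{p_n}(T\le t)-o(1)$ and $\P_{p_n}(T=t+1)=1-\P_{p_n}(T\le t)+o(1)$, so everything reduces to showing $\P_{p_n}(T\le t)\to\exp(-g_{d,r}c)$ when $q_n^{M(t)}n^d\to c$.

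For that I would reopen the proof of Theorem~\ref{th:main}, whose geometric core (the three extremal theorems) should describe $\{T>t\}$ as follows: up to an event of probability $o(1)$ — the extremal theorems ruling out all ``exotic'' slow configurations — $\{T>t\}$ is the event that the initial set $A$ misses the support of at least one \emph{minimal obstruction}, a translate of one of finitely many geometric normal forms each of which prescribes a specific set of exactly $M(t)$ sites of $\T_n^d$ that must avoid $A$. Counting these normal forms up to translation should yield exactly $g_{d,r}=\binom{d}{d-r+1}2^{r-1}d^{2(d-r+1)}$ of them — the binomial coefficient choosing the $d-r+1$ blocked directions, the factor $2^{r-1}$ an orientation, and $d^{2(d-r+1)}$ the residual local freedom along the blocked directions — so, writing $Z_n$ for the number of minimal obstructions missed by $A$, one gets $\E_{p_n}Z_n=g_{d,r}\,n^d q_n^{M(t)}(1+o(1))\to g_{d,r}c$. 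I would then prove $Z_n\xrightarrow{d}\Po(g_{d,r}c)$ by the Chen--Stein method (equivalently, by checking convergence of all factorial moments): two minimal obstructions supported on disjoint site-sets are independent, two whose pinning points are far apart are automatically so supported, and the expected number of pairs of distinct obstructions sharing a site is $O\!\bigl(n^d q_n^{M(t)+1}\bigr)=o(1)$ because, by minimality, two distinct overlapping normal forms jointly forbid strictly more than $M(t)$ sites. Hence $\P_{p_n}(T\le t)=\P_{p_n}(Z_n=0)+o(1)\to\exp(-g_{d,r}c)$, completing part~(ii).

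Everything resting directly on Theorem~\ref{th:main} — the three-levels argument and the reduction above, and also the clustering estimate in the Poisson step (routine once the normal forms are available, since minimal obstructions are ``thin'' and any two sharing a site must agree over much of their support) — is bookkeeping. The genuine obstacle is the geometric input: extracting from the proof of Theorem~\ref{th:main} the precise list of minimal obstructions, verifying that \emph{every} configuration with $T>t$ contains one of them (not merely that slow percolation is rare), and confirming that the enumeration of normal forms up to translation is exactly $g_{d,r}$ — all of which is a matter of the extremal theory rather than of fresh probabilistic estimates, and is where essentially all the work beyond Theorem~\ref{th:main} lies.
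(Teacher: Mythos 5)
Your plan is essentially the paper's: part (i) and the two-point concentration follow from Theorem~\ref{th:main} applied at adjacent levels (the paper chooses $\omega(n)=\exp(c_0\log n/t)$ for small $c_0$, which is equivalent to your $\Theta(\log n/t)$ computation), and the Poisson limit comes from a Chen--Stein argument whose first- and second-moment inputs are exactly the extremal theorems (the constant $g_{d,r}$ counts semi-canonical sets via Theorem~\ref{th:extremal}, the near-minimal correction is controlled by Theorem~\ref{th:stability}). The only real difference is which random variable you Poissonize: you propose to count \emph{minimal obstructions} $Z_n$ and then argue that $\{T>t\}\setminus\{Z_n>0\}$ has probability $o(1)$, whereas the paper Poissonizes $F(t,n)$, the number of sites uninfected at time $t$, so that all protecting configurations (minimal and near-minimal) are absorbed directly into $\rho_1=\P(x\notin A_t)=(1+o(1))\,g_{d,r}\,q_n^{m_{d,r}(t)}$ via the sum $\sum_k g_{d,r}(t,k)\,q_n^{m(t)+k}$ of Lemma~\ref{le:rho}. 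The paper's choice is slightly cleaner since it avoids the separate coupling step between $Z_n=0$ and $T\le t$ and since $\{T\le t\}=\{F(t,n)=0\}$ holds exactly rather than up to an $o(1)$ event; but both routes rest on the identical geometric facts, and yours would go through with minor bookkeeping (your overlap bound should carry the $t^{O(1)}$ factor from the number of nearby positions, which is harmless by Lemma~\ref{le:tbound}).
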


Why should one expect such a sharp result as Theorem \ref{th:conc} when $p$ is close to $1$? The key is that with $p$ so large, and therefore the expected percolation time so small, the infection times of almost all pairs of sites in $\T_n^d$ behave independently. This is because if two sites are at $\ell_1$ distance greater than $t$ then their states cannot affect each other by time $t$. This allows one to show with the help of some probabilistic machinery that the number of sites uninfected at time $t$ converges in distribution to a Poisson distribution. We make this statement precise in Theorem \ref{th:dist}. 

When $r>d$, the situation is very different. For $r$ in this range, which is called the \emph{subcritical} range, there exist cofinite sets in $\Z^d$ that are closed under the $r$-neighbour model. This greatly simplifies the analysis of the model, and we shall see in Theorem \ref{th:sub} that it reduces the range of possible percolation times to the finite set $\{0,1,\dots,d,\infty\}$ with high probability.

The second broad class of questions one can ask about bootstrap percolation is the class of extremal questions: for example, what is the minimum or maximum size of $A$ such that a certain property holds, or what is the minimum or maximum time that percolation can take, possibly given certain properties of $A$? The first significant theorems in extremal bootstrap percolation were due to Morris \cite{MorrisMin} and Riedl \cite{Riedl}, who studied the sizes of minimal percolating sets on the square grid and the hypercube $\{0,1\}^d$ respectively. Later, Benevides and Przykucki \cite{BenPrz} determined the maximal percolating time on the square grid, and Przykucki \cite{PrzHyp} did the same for the hypercube.

Our reason for mentioning extremal bootstrap percolation in the introduction to a paper about probabilistic bootstrap percolation is that the proofs of our probabilistic results rest almost entirely on a series of three new extremal theorems that together establish an almost complete analysis of the geometry of the $r$-neighbour bootstrap process in the dense setting. Special cases of these extremal theorems are used in the proofs of the main results in \cite{BHSU}, but the generalizations here are far from straightforward. The key reason for the additional challenges in the general setting is that the geometry of the $r$-neighbour bootstrap process in $\Z^d$ with dense initial sets is essentially $(d-r+1)$-dimensional. Thus, in the special case $r=d$, the geometry of the process is essentially one-dimensional, so it is not surprising that the analysis presents far fewer difficulties.

In order to put the three extremal results into context, first we describe the overall approach of the proofs of Theorems \ref{th:main} and \ref{th:conc}. The Poisson approximation property we described briefly several paragraphs ago is achieved through use of our main probabilistic tool, the Stein-Chen method, which is a means of proving bounds between two probability distributions. Typically one of the distributions is either the normal distribution, as in the original work of Stein \cite{Stein}, or, as here, the Poisson distribution, as later developed by Chen \cite{Chen}. In order to use the Stein-Chen method in our context we need tight control of the first two moments of the number of uninfected sites at time $t$. It is here that the three extremal theorems are needed, and we describe them now.

The first, Theorem \ref{th:min} in the present paper, supposes that a given site $x\in\T_n^d$ is uninfected at time $t$ and ainswers the question: what is the maximum possible size of the set of initially infected sites $A$? A preliminary observation that we have already made is that the states of sites at $\ell_1$ distance greater than $t$ from $x$ cannot affect whether or not $x$ is infected at time $t$, so we may restrict our attention to the $\ell_1$ ball
\[
B_t(x) = \{y\in\T_n^d \, : \, \|x-y\| \leq t\}.
\]
Later we shall need the $\ell_1$ sphere
\[
S_t(x) = \{y\in\T_n^d \, : \, \|y-x\| = t\},
\]
and we write $B_t$ for $B_t(0)$ and $S_t$ for $S_t(0)$. The question now is to determine the quantity
\begin{equation}\label{eq:extrdef}
\ex_{d,r}(t) := \min \{ |B_t \setminus A| \, : \, 0\notin A_t \}.
\end{equation}
If a set $P\subset V(G)$ of vertices has the property that, provided no element of $P$ is in $A$, then no matter what the initial states of the other vertices in $V(G)$ are it can never be the case that $x\in A_t$, then we say that $P$ \emph{protects} $x$ (the time $t$ is implicit). We also say that a site $x$ is \emph{protected} if $x\notin A_{t-\|x\|_1}$. Given $x\in\T_n^d$, let $x_i$ be the $i$th coordinate of $x$ with respect to the standard basis vectors $e_1,\dots,e_d$ in $\R^d$, so $x=(x_1,\dots,x_d)$. A natural example of a set of sites that protects $0$ under the $r$-neighbour process is
\begin{equation}\label{eq:extrset}
P_{d,r}(t) = \{ x\in B_t \, : \, x_{d-r+2},\dots,x_d \in \{0,1\} \},
\end{equation}
which we can think of as the intersection of $B_t(0)$ with a (disjoint) union of $2^{r-1}$ translates of the $(d-r+1)$-dimensional `subspace' $\{x: x_{d-r+2}=\dots=x_d=0\}$ in $\Z^d$, or informally as a $(d-r+1)$-dimensional set in $\Z^d$ with `thickness' $2$. (Of course $\Z^d$ is not a vector space, so it does not make sense to talk about subspaces, but we shall often do so, unambiguously, always meaning the intersection of $\Z^d$ with the corresponding subspace of $\R^d$. The purpose of this slight abuse of nomenclature is to make clear the graph structure of the object under consideration, and in particular the degrees of the sites.) The first extremal theorem (Theorem \ref{th:min}) says that this is best possible: $\ex_{d,r}(t)$ is at least the size of the set $P_{d,r}(t)$ for every $2\leq r\leq d$. Moreover, we verify that the size of $P_{d,r}(t)$ is precisely the quantity $m_{d,r}(t)$ defined in \eqref{eq:mt}, so the full content of the theorem is the statement that
\begin{equation}\label{eq:min}
\ex_{d,r}(t) = |P_{d,r}(t)| = m_{d,r}(t).
\end{equation}

The second of the three extremal theorems, Theorem \ref{th:extremal}, says that sets of the form of $P_{d,r}(t)$ are essentially the only extremal sets. By `essentially' here we mean that there do exist other extremal sets, but there are a constant number of them, and apart from rotations and reflections, they only differ from $P_{d,r}(t)$ in the positions of at most $2(d-r+1)$ sites. The constant number of them that there are is the quantity $g_{d,r}$ defined in the statement of Theorem \ref{th:conc}.

Now suppose instead that the number of initially infected sites is not the minimal number, but is close to the minimum. What can we say about the positions of these sites now? The third extremal theorem, Theorem \ref{th:stability}, is a stability theorem that says that under these conditions the sites must look a lot like a set of the form of $P_{d,r}(t)$, in a certain specific sense. As well as being interesting in its own right, this theorem immediately implies that the number of sets of size $m_{d,r}(t)+a$ uninfected sites in $B_t$ that protect the origin is $t^{O(a)}$, which is much smaller than the trivial bound of $t^{O(ta)}$ when $a$ is small.

As we mentioned earlier, the proofs of the three extremal theorems present significant new difficulties over the results of \cite{BHSU} because the geometry of the bootstrap process in the dense setting is no longer one-dimensional, but $(d-r+1)$-dimensional. (The result of Theorem \ref{th:extremal}, that sets like $P_{d,r}(t)$ are essentially the only extremal sets, makes this claim a little more substantive.) We now expand on the key difficulties in generalizing the results of \cite{BHSU} to other thresholds $r$ and some of the ideas we use to overcome those difficulties. 

The proof of the first extremal theorem in \cite{BHSU} proceeds by induction on two variables: the time parameter $t$ and the number $f$ of so-called free coordinates (see Section \ref{se:min} for a definition). Owing to the complications introduced by the higher-dimensional geometry of the process in the general $r$ setting, the proof of the first extremal theorem in the present paper proceeds by induction on three variables; the new variable is the difference $d-r$. During the course of this induction we have to keep track of a large collection of disjoint subsets of $B_t$ of different dimensions (Figure \ref{fi:case2} illustrates the sets in the case $d=3$, $r=2$). This is achieved through the use of several new ideas, including compatibility functions and restrictions, which are defined early in Section \ref{se:min}.

For the second extremal theorem, the simplest of the three, the proof in \cite{BHSU} splits into three cases according to the relative sizes of $t$ and $d$. That proof would not have generalized to a bounded number of cases for general thresholds, so here we present a new, considerably streamlined proof.

The proof of the stability theorem in \cite{BHSU} uses a lemma (Lemma $3.7$ in that paper) about connected components of uninfected sites. The corresponding statement for thresholds other than $r=d$ is false, which has meant that we have had to introduce a completely new approach to the proof of the stability theorem in this paper. This new approach makes several uses of a technique that allows one to show the following. Suppose the origin is protected and, for some $k$, the ball $B_k$ contains the minimum number of protected sites $\ex_{d,r}(k)$. (Actually the assumption is allowed to be much weaker than that, but we do not go into the details here.) Then provided the sites $-e_1$ and $e_1$ are both protected, one can show that the intersection of $B_k$ with the hyperplane $\{x\in\Z^d:x_1=0\}$ cannot contain too many protected sites: in fact, they must also be minimal. The power of this technique, which we call the \emph{hyperplane restriction principle}, is that it allows one to pass to a lower dimensional space where we may apply induction. An extended sketch of the proof of Theorem \ref{th:stability} is given in Section \ref{se:stab}.

There are also versions of our results for the \emph{modified} $r$-neighbour bootstrap percolation model. In this model, on the graph $\T_n^d$, there is again an initial set $A$ of infected sites, and for $t\geq 0$ we set
\[
A_{t+1} = A_t \cup \{v : |\{v+e_i,v-e_i\} \cap A_t| \geq 1 \text{ for at least $r$ distinct choices of $i\in[d]$}\}.
\]

With some slight simplifications, our arguments can be used to prove the following result.

\begin{theorem}\label{th:mod}
Under the modified $r$-neighbour rule on $\T_n^d$, Theorems \ref{th:main} and \ref{th:conc} hold, with $m_{d,r}(t)$ replaced by $m_{d,r}'(t)$ and $g_{d,r}$ replaced by $g_{d,r}'$, where
\[
m_{d,r}'(t) = \sum_{i_0=0}^{d-r+1} \binom{d-r+1}{i_0} \sum_{i_1=0}^{t-i_0} \sum_{i_2=0}^{i_1} \dots \sum_{i_{d-r+1}=0}^{i_{d-r}} 1
\]
is the volume of a $(d-r+1)$-dimensional $\ell_1$ ball of radius $t$, and
\[
g_{d,r}' = \binom{d}{d-r+1}.
\]
\end{theorem}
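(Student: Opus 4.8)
The plan is to re-run the entire argument behind Theorems~\ref{th:main} and~\ref{th:conc} with the modified rule in place of the standard one. At the top level nothing changes: one first proves the three extremal theorems --- the minimum size of a set protecting the origin inside $B_t$, the characterisation of the extremal sets, and the stability statement --- for the modified process, and then feeds the resulting first- and second-moment estimates into the Stein-Chen method exactly as in the standard case. So the task reduces to identifying the modified analogues of the sets $P_{d,r}(t)$ and of the constants $m_{d,r}(t)$ and $g_{d,r}$, and checking that the three extremal arguments survive those substitutions, with the promised simplifications.

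The combinatorial heart of the matter is the modified extremal set. Put
\[
P_{d,r}'(t) = \{x\in B_t : x_{d-r+2} = \dots = x_d = 0\},
\]
the $\ell_1$ ball of radius $t$ inside the $(d-r+1)$-dimensional subspace $H = \{x : x_{d-r+2} = \dots = x_d = 0\}$; a routine summation identifies $|P_{d,r}'(t)|$ with $m_{d,r}'(t)$, the volume of that ball. I claim $P_{d,r}'(t)$ protects the origin under the modified rule. A site $v\in H$ has exactly $r-1$ coordinate directions pointing out of $H$, so even if all $2(r-1)$ of its out-of-$H$ neighbours are infected for all time, $v$ still needs an infected neighbour \emph{inside} $H$ before it can become infected; hence the restriction of the modified $r$-neighbour process to $H$ is dominated in speed by $1$-neighbour bootstrap percolation on the $(d-r+1)$-dimensional sub-torus. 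If $A$ misses $P_{d,r}'(t)$, then every initially infected site of $H$ is at $\ell_1$ distance at least $t+1$ from the origin, so under $1$-neighbour percolation --- and therefore under the modified process --- the origin is uninfected at time $t$. This is where the simplification enters: $P_{d,r}'(t)$ has thickness one rather than two, and there is essentially no perturbation freedom beyond the choice of which $d-r+1$ coordinates span $H$; this is precisely what collapses the extremal count from $g_{d,r}$ to $g_{d,r}' = \binom{d}{d-r+1}$.

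Given this construction, the lower bound $|B_t\setminus A| \geq m_{d,r}'(t)$ whenever $0\notin A_t$ should be obtained by the same induction as in Theorem~\ref{th:min} --- on $t$, on the number of free coordinates, and on $d-r$ --- with lighter bookkeeping of the nested families of subsets of $B_t$ because the objects in play now have genuine thickness one. The characterisation of the extremal sets is then obtained by the streamlined argument behind Theorem~\ref{th:extremal}, and the stability theorem by the method of Theorem~\ref{th:stability}, again via the hyperplane restriction principle, which passes to a lower-dimensional slice where the modified induction applies. On the probabilistic side there is nothing new: the expected number of sites uninfected at time $t$ is asymptotically $g_{d,r}' q_n^{m_{d,r}'(t)} n^d$, near-extremal local configurations are counted using the modified stability theorem just as before, and the Stein-Chen estimate delivers the Poisson limit and hence the one- and two-point concentration conclusions, with $m_{d,r}$ and $g_{d,r}$ replaced throughout by $m_{d,r}'$ and $g_{d,r}'$.

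The main obstacle, as in the original development, will be the stability theorem for the modified rule: although the thickness-one geometry genuinely eases the first two extremal steps, re-establishing the hyperplane restriction principle and carrying the stability induction through for the per-direction infection rule still requires real care, and this is where most of the work of adapting the arguments is concentrated. A secondary point to be checked carefully is that the domination argument above is not defeated by infection chains that leave $H$ and return; this holds because the origin can be infected only through an infected neighbour in $H$, and inductively the same is true of every site of $H$ lying on such a chain, but the verification should be written out in full.
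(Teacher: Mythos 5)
Your proposal follows the same route the paper implicitly prescribes: adapt the three extremal theorems to the modified rule and plug the resulting moment estimates into the Stein--Chen argument (the paper itself offers no proof of Theorem~\ref{th:mod} beyond the remark that the existing arguments simplify). Your fleshing-out is correct --- in particular the domination-by-one-neighbour argument cleanly shows that the thickness-one slab $P_{d,r}'(t)$ protects, and the collapse to $g_{d,r}' = \binom{d}{d-r+1}$ is right because under the modified rule there are no sign choices $\epsilon_i$ and (as one checks directly) no extreme-site perturbation survives the per-direction threshold condition.
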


The rest of this paper is organized as follows. In Section \ref{se:comb} we prove two basic results about binomial coefficients, which are needed in the proofs of our main extremal results. In Sections \ref{se:min} and \ref{se:stab} we study minimal and near-minimal protecting sets respectively, and prove the three extremal theorems. We bring these results together in Section \ref{se:main}, and with the help of some standard probabilistic tools, use them to prove Theorems \ref{th:main} and \ref{th:conc}. Finally, in Section \ref{se:sub}, we prove corresponding results for subcritical models.

\section{Combinatorial preliminaries}\label{se:comb}

The purpose of this section is to prove two easy identities concerning sums of binomial coefficients; we shall use these repeatedly throughout the next two sections.

\begin{lemma}\label{le:minibinom}
Let $d\geq r\geq 2$, let $f\geq 0$, and let $k\geq 0$. Then
\begin{multline}\label{eq:minibinom}
\sum_{i_1=0}^k \sum_{i_2=0}^{i_1} \cdots \sum_{i_{d-r+1}=0}^{i_{d-r}} \binom{f}{i_{d-r+1}} \\
= 2\sum_{i_1=0}^{k-1} \sum_{i_2=0}^{i_1} \cdots \sum_{i_{d-r+1}=0}^{i_{d-r}} \binom{f-1}{i_{d-r+1}} + \sum_{i_2=0}^k \sum_{i_3=0}^{i_2} \cdots \sum_{i_{d-r+1}=0}^{i_{d-r}} \binom{f-1}{i_{d-r+1}}.
\end{multline}
\end{lemma}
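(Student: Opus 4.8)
The plan is to prove the identity by splitting the outermost sum on the left-hand side according to the value of $i_1$. Specifically, I would isolate the $i_1 = 0$ term from the rest, and handle the two pieces separately using the standard Pascal-type recursion $\binom{f}{j} = \binom{f-1}{j} + \binom{f-1}{j-1}$, or rather its summed form.

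First I would observe that
\[
\sum_{i_1=0}^k \sum_{i_2=0}^{i_1} \cdots \sum_{i_{d-r+1}=0}^{i_{d-r}} \binom{f}{i_{d-r+1}}
= \sum_{i_2=0}^0 \cdots \sum_{i_{d-r+1}=0}^{i_{d-r}} \binom{f}{i_{d-r+1}}
+ \sum_{i_1=1}^k \sum_{i_2=0}^{i_1} \cdots \sum_{i_{d-r+1}=0}^{i_{d-r}} \binom{f}{i_{d-r+1}},
\]
where the first term is just $\binom{f}{0} = 1$ (all inner indices are forced to $0$) — except this needs care when $d - r + 1 = 1$, in which case the whole left side is $\sum_{i_1=0}^k \binom{f}{i_1}$ and the nested structure degenerates; I would either treat $d-r+1=1$ as a base case handled directly by the elementary identity $\sum_{i=0}^k \binom{f}{i} = 2\sum_{i=0}^{k-1}\binom{f-1}{i} + \binom{f-1}{k}$ (itself from Pascal), or set up the bookkeeping so the general manipulation covers it. For the main sum $\sum_{i_1=1}^k$, I would reindex $i_1 \mapsto i_1 + 1$ to get $\sum_{i_1=0}^{k-1}$ with the inner sum now running $\sum_{i_2=0}^{i_1+1}$, and then split that inner sum again at $i_2 = i_1 + 1$ versus $i_2 \le i_1$. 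The $i_2 \le i_1$ part, after applying Pascal to $\binom{f}{i_{d-r+1}}$, should reproduce (twice, once from each of the two sums we are building) the first term on the right, $2\sum_{i_1=0}^{k-1}\cdots\binom{f-1}{i_{d-r+1}}$; the leftover pieces (the $i_1=0$ term and the $i_2 = i_1+1$ boundary terms telescoping through the layers) should collapse to the second term on the right, $\sum_{i_2=0}^k \cdots \binom{f-1}{i_{d-r+1}}$.

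Alternatively — and this may be cleaner to write up — I would interpret both sides combinatorially. The quantity $m_{d,r}(t)$ and these partial sums count lattice points in a certain staircase region intersected with a cube $\{0,1\}^f$ in the last coordinate, i.e. points $x$ with $x_{d-r+2},\dots,x_d \in \{0,1\}$ and the remaining coordinates satisfying a nested inequality chain. The factor of $2$ and the shift $f \to f-1$ on the right then correspond to conditioning on whether the coordinate $x_{d-r+1}$ equals $0$ or is "large", and the split between the two right-hand terms corresponds to whether or not the first free coordinate $x_1$ attains its maximum. I would set up a bijection realizing exactly this decomposition. This geometric viewpoint also matches the narrative of the paper (the sets $P_{d,r}(t)$, "thickness $2$"), so it fits the exposition, but the induction-free algebraic splitting is likely shorter.

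The main obstacle I anticipate is purely organizational: keeping the nested summation indices straight through the reindexing and the repeated "peel off the boundary term" steps, and correctly handling the degenerate endpoints — the case $d-r+1 = 1$ (no nesting at all), the case $k = 0$ (where the left side is $\binom{f}{0}^{\text{nested}} = 1$ and the right side should also evaluate to $1$, with the first right-hand sum empty), and the case $f = 0$. None of these is deep, but a clean write-up should either dispatch them as explicit base cases or choose conventions (empty sums are $0$, $\binom{f}{j} = 0$ for $j < 0$ or $j > f$) under which the general identity subsumes them. Once the conventions are fixed, the computation is a routine, if slightly tedious, rearrangement of finite sums, so I would present it compactly and leave the most mechanical verification to the reader.
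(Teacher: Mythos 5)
Your first (algebraic) approach is genuinely different from the paper's, which instead proceeds by induction on $d-r$: the paper takes the single-level identity $\sum_{i=0}^k\binom{f}{i} = 2\sum_{i=0}^{k-1}\binom{f-1}{i} + \binom{f-1}{k}$ as the base case, and in the induction step rearranges the right-hand side of \eqref{eq:minibinom} into a form where the induction hypothesis (at level $d-r-1$) can be applied twice and the results recombined into the left-hand side. Your plan avoids induction on $d-r$ entirely, and a cleaned-up version of it does work and is arguably tidier than the paper's argument; but I would reorganize it. Rather than isolating the $i_1=0$ term at the outermost level and then reindexing and splitting at $i_2=i_1+1$ (which, as you anticipate, throws off a boundary term at every nesting level that you then have to collect), it is cleaner to start from the innermost binomial: apply Pascal, $\binom{f}{i_{d-r+1}} = \binom{f-1}{i_{d-r+1}} + \binom{f-1}{i_{d-r+1}-1}$, so the left-hand side splits into two nested sums over $\binom{f-1}{\cdot}$. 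In the second one, reindex the innermost sum $\sum_{i_{d-r+1}=0}^{i_{d-r}}\binom{f-1}{i_{d-r+1}-1} = \sum_{i_{d-r+1}=0}^{i_{d-r}-1}\binom{f-1}{i_{d-r+1}}$ and then push the resulting $-1$ outward one level at a time using $\sum_{a=0}^{b}\sum_{c=0}^{a-1} = \sum_{a=0}^{b-1}\sum_{c=0}^{a}$; this leaves a single $-1$ on the outermost bound, giving exactly $\sum_{i_1=0}^{k-1}\cdots\binom{f-1}{i_{d-r+1}}$. In the first one, split off just the $i_1=k$ term, which is precisely $\sum_{i_2=0}^{k}\cdots\binom{f-1}{i_{d-r+1}}$. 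Adding these up gives the right-hand side of \eqref{eq:minibinom} directly, with only one boundary term to handle. The induction in the paper is perhaps easier to audit step by step; your direct route is shorter once set up, and it also makes the $d-r+1=1$ case a special instance rather than a separate base case. Your combinatorial bijection idea is the geometry the paper has in mind for interpreting the identity, and it would also work, but making it precise would take longer than either algebraic argument. The one thing you should actually verify before writing it up is that your "telescoping through the layers" really does collapse cleanly to the second term on the right — working through $d-r+1=2$ and $3$ explicitly, as you would for the reorganized version above, will catch any off-by-one errors your hedged description is currently leaving to the reader.
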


In Section \ref{se:min} we show that the left-hand side of \eqref{eq:minibinom} is the volume of the surface of an extremal set of radius $k$, as in \eqref{eq:extrset}. The lemma can be thought of as saying that this volume is equal to the volume of that part of the surface that lies in a codimension $1$ subspace (the second term on the right-hand side of \eqref{eq:minibinom}) plus the volume of the surface that lies in each hyperplane parallel to (but distinct from) the subspace.

\begin{proof}
We shall prove the identity by induction on $d-r$. When $d-r=0$, \eqref{eq:minibinom} is equivalent to
\[
\sum_{i_1=0}^k \binom{f}{i_1} = 2\sum_{i_1=0}^{k-1} \binom{f-1}{i_1} + \binom{f-1}{k}.
\]
Rewriting the right-hand side as
\[
\binom{f-1}{0} + \left(\binom{f-1}{0}+\binom{f-1}{1}\right) + \dots + \left(\binom{f-1}{k-1}+\binom{f-1}{k}\right),
\]
we see that the identity holds.

Suppose the lemma holds for $d-r-1$. After re-indexing the second expression, the right-hand side of \eqref{eq:minibinom} is equal to
\begin{equation}\label{eq:microbinom}
2\sum_{i_1=0}^{k-1} \sum_{i_2=0}^{i_1} \cdots \sum_{i_{d-r+1}=0}^{i_{d-r}} \binom{f-1}{i_{d-r+1}} + \sum_{i_1=0}^k \sum_{i_2=0}^{i_1} \cdots \sum_{i_{d-r}=0}^{i_{d-r-1}} \binom{f-1}{i_{d-r}}.
\end{equation}
We can rewrite the first expression here as
\begin{align}\label{eq:nanobinom1}
2&\sum_{i_1=0}^{k-1} \left( \sum_{i_2=0}^{i_1-1} \cdots \sum_{i_{d-r+1}=0}^{i_{d-r}} \binom{f-1}{i_{d-r+1}} + \sum_{i_3=0}^{i_1} \cdots \sum_{i_{d-r+1}=0}^{i_{d-r}} \binom{f-1}{i_{d-r+1}} \right) \notag \\
=&\sum_{i_1=0}^{k-1} 2 \sum_{i_2=0}^{i_1-1} \cdots \sum_{i_{d-r+1}=0}^{i_{d-r}} \binom{f-1}{i_{d-r+1}} + 2\sum_{i_1=0}^{k-1} \sum_{i_2=0}^{i_1} \cdots \sum_{i_{d-r}=0}^{i_{d-r-1}} \binom{f-1}{i_{d-r}},
\end{align}
where in the second line we have just moved the factor of $2$ inside one sum and re-indexed the second sum. We can also rewrite the second expression in \eqref{eq:microbinom} as
\begin{equation}\label{eq:nanobinom2}
\sum_{i_2=0}^k \sum_{i_3=0}^{i_2} \cdots \sum_{i_{d-r}=0}^{i_{d-r-1}} \binom{f-1}{i_{d-r}} + \sum_{i_1=0}^{k-1} \sum_{i_2=0}^{i_1} \cdots \sum_{i_{d-r}=0}^{i_{d-r-1}} \binom{f-1}{i_{d-r}}.
\end{equation}
Summing \eqref{eq:nanobinom1} and \eqref{eq:nanobinom2} we obtain that \eqref{eq:microbinom} is equal to
\begin{multline*}
\sum_{i_1=0}^{k-1} \left( 2\sum_{i_2=0}^{i_1-1} \cdots \sum_{i_{d-r+1}=0}^{i_{d-r}} \binom{f-1}{i_{d-r+1}} + \sum_{i_2=0}^{i_1} \cdots \sum_{i_{d-r}=0}^{i_{d-r-1}} \binom{f-1}{i_{d-r}} \right) \\
+ \left( 2\sum_{i_1=0}^{k-1} \sum_{i_2=0}^{i_1} \cdots \sum_{i_{d-r}=0}^{i_{d-r-1}} \binom{f-1}{i_{d-r}} + \sum_{i_2=0}^k \sum_{i_3=0}^{i_2} \cdots \sum_{i_{d-r}=0}^{i_{d-r-1}} \binom{f-1}{i_{d-r}} \right).
\end{multline*}
Applying induction twice, once to each of the expressions inside the two sets of large brackets, we find this is equal to
\[
\sum_{i_1=0}^{k-1} \sum_{i_2=0}^{i_1} \cdots \sum_{i_{d-r+1}=0}^{i_{d-r}} \binom{f}{i_{d-r+1}} + \sum_{i_1=0}^k \sum_{i_2=0}^{i_1} \cdots \sum_{i_{d-r}=0}^{i_{d-r-1}} \binom{f}{i_{d-r}},
\]
which, upon re-indexing the second expression one last time and combining the sums, proves the lemma.
\end{proof}

The next lemma is an iterated version of the previous one, and has a similar iterated interpretation.

\begin{lemma}\label{le:megabinom}
Let $d>r\geq 2$, let $0\leq f\leq d$, and let $k\geq 0$. Then
\begin{align}\label{eq:megabinom}
\sum_{i_1=0}^k \sum_{i_2=0}^{i_1} \cdots \sum_{i_{d-r+1}=0}^{i_{d-r}} \binom{f}{i_{d-r+1}} = \; &2\sum_{i_1=0}^{k-1} \sum_{i_2=0}^{i_1} \cdots \sum_{i_{d-r+1}=0}^{i_{d-r}} \binom{f-1}{i_{d-r+1}} \notag \\
&+ 2\sum_{i_2=0}^{k-1} \sum_{i_3=0}^{i_1} \cdots \sum_{i_{d-r+1}=0}^{i_{d-r}} \binom{f-2}{i_{d-r+1}} + \dots \notag \\
&+ 2\sum_{i_{d-r+1}=0}^{k-1} \binom{f-d+r-1}{i_{d-r+1}} + \binom{f-d+r-1}{k}.
\end{align}
\end{lemma}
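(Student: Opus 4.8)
The plan is to obtain \eqref{eq:megabinom} by iterating Lemma \ref{le:minibinom}, applying it once at each of the $d-r+1$ levels of nesting and peeling off one summand of the right-hand side of \eqref{eq:megabinom} at each step. One small preliminary point makes the iteration clean: both sides of \eqref{eq:megabinom} are polynomials in $f$, since each is a $\Z$-linear combination of binomial coefficients $\binom{f-j}{i}$ with $0\leq i\leq k$, and each such coefficient is a polynomial in $f$. Hence it suffices to prove the identity for all sufficiently large integers $f$, and we may assume $f\geq d-r+1$ from now on; this guarantees that every binomial coefficient met below has nonnegative top entry, so that each appeal to Lemma \ref{le:minibinom} falls within its stated range of validity. (Without this reduction an unguarded iteration would call for Lemma \ref{le:minibinom} at negative values of its parameter $f$.)

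Now the iteration. Write $B_0$ for the left-hand side of \eqref{eq:megabinom}, and for $1\leq j\leq d-r$ put
\[
B_j = \sum_{i_{j+1}=0}^k \sum_{i_{j+2}=0}^{i_{j+1}} \cdots \sum_{i_{d-r+1}=0}^{i_{d-r}} \binom{f-j}{i_{d-r+1}},
\]
a nested sum with $d-r+1-j$ summations; also set $B_{d-r+1}=\binom{f-d+r-1}{k}$. Directly from these definitions one has $B_{j-1}=\sum_{i_j=0}^k\sum_{i_{j+1}=0}^{i_j}\cdots\sum_{i_{d-r+1}=0}^{i_{d-r}}\binom{f-j+1}{i_{d-r+1}}$ for every $1\leq j\leq d-r$, and $B_{d-r}=\sum_{i_{d-r+1}=0}^k\binom{f-d+r}{i_{d-r+1}}$. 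Apply Lemma \ref{le:minibinom} to $B_{j-1}$, with the roles of $d-r$, $f$ and $k$ in that lemma played by $d-r+1-j$, $f-j+1$ and $k$ respectively; for $j=d-r+1$ this is the base case $d-r=0$ of the lemma applied to the single sum $B_{d-r}$. In every case this yields $B_{j-1}=2C_j+B_j$, where
\[
C_j = \sum_{i_j=0}^{k-1} \sum_{i_{j+1}=0}^{i_j} \cdots \sum_{i_{d-r+1}=0}^{i_{d-r}} \binom{f-j}{i_{d-r+1}}
\]
is precisely the $j$th of the terms on the right-hand side of \eqref{eq:megabinom} (using $f-(d-r+1)=f-d+r-1$ for the last one). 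Telescoping the relations $B_{j-1}=2C_j+B_j$ over $1\leq j\leq d-r+1$ gives $B_0=\sum_{j=1}^{d-r+1}2C_j+B_{d-r+1}$, which is exactly \eqref{eq:megabinom}.

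I expect essentially all of the work to be bookkeeping: checking at each step that the remainder $B_j$ really has the advertised shape — one summation fewer than $B_{j-1}$, and the binomial's top entry one smaller — and that the summand $C_j$ split off at step $j$ matches the $j$th term of \eqref{eq:megabinom} term for term, including the precise nesting $\sum_{i_j=0}^{k-1}\sum_{i_{j+1}=0}^{i_j}\cdots$ of its indices. There is no genuine mathematical obstacle beyond Lemma \ref{le:minibinom} itself. If one prefers not to carry a long chain of intermediate sums, the same argument can be packaged as an induction on $d-r\geq 1$: the base case $d-r=1$ uses Lemma \ref{le:minibinom} twice (once in general form, once in the form $d-r=0$), and the inductive step applies Lemma \ref{le:minibinom} once to express the left-hand side as the $\ell=1$ term plus a $(d-r-1)$-fold nested sum, to which the inductive hypothesis applies.
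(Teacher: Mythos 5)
Your proof is correct and is essentially the same argument as the paper's: the paper also reduces to Lemma \ref{le:minibinom} by induction on $d-r$, and your telescoping chain $B_{j-1}=2C_j+B_j$ is precisely that induction unrolled, as you yourself observe at the end. The polynomial-in-$f$ preliminary is a tidy way to legitimise invoking Lemma \ref{le:minibinom} with top entries below its stated range $f\geq 0$; the paper glosses over this point, and your care here is warranted and costs nothing.
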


\begin{proof}
Again, we shall prove the identity by induction on $d-r$. When $d-r=1$, the claim is precisely the same as Lemma \ref{le:minibinom}.

Suppose the lemma holds for $d-r-1$. It follows that the right-hand side of \eqref{eq:megabinom} is equal to
\[
2\sum_{i_1=0}^{k-1} \sum_{i_2=0}^{i_1} \cdots \sum_{i_{d-r+1}=0}^{i_{d-r}} \binom{f-1}{i_{d-r+1}} + \sum_{i_2=0}^k \sum_{i_3=0}^{i_2} \cdots \sum_{i_{d-r+1}=0}^{i_{d-r}} \binom{f-1}{i_{d-r+1}}.
\]
This is equal to the left-hand side of \eqref{eq:megabinom} by Lemma \ref{le:minibinom}.
\end{proof}

\section{Minimal configurations}\label{se:min}

In this section we prove two extremal theorems about sets of uninfected sites with certain properties. The first theorem determines the extremal number of sites defined in \eqref{eq:extrdef} and shows that it is equal to the size of the natural example of a protecting set defined in \eqref{eq:extrset}. The second says that sets like the one in \eqref{eq:extrset} are essentially the only extremal sets.

We need quite a lot of notation before we can state the results of this section in the level of generality that we need later in the paper.

First, define a partial order on sites in $\Z^d$ as follows. For $x,y\in\Z^d$, we say that $x\leq y$, or that $y\geq x$, or that $y$ is \emph{above} $x$, if $y_i\geq x_i$ for each $i$ such that $x_i>0$, and $y_i\leq x_i$ for each $i$ such that $x_i<0$.

Given a graph $G$, a vertex $x\in V(G)$, and a subgraph $H$ of $G$, we use the standard graph-theoretic notation $d_H(x)$ to mean the degree of $x$ in $H$. Thus, $d_H(x):=\big|\big\{y\in V(H):\{x,y\}\in E(G)\big\}\big|$.

Throughout this section and the next, in which we focus on extremal rather than probabilistic questions, the time $t$ will be fixed, and because it is fixed we shall only rarely mention it explicitly. Recall that a site $x\in\T_n^d$ is said to be \emph{protected} if $x\notin A_{t-\|x\|_1}$. This is a natural definition: one should think of it as saying that $x$ is protected if it is still uninfected at the last time its state could affect the state of the origin at time $t$. Clearly this a stronger statement than saying that $x$ is initially uninfected. We also write $P(X)$ for the set of protected sites in a set $X$.

Two neighbours $y_1$ and $y_2$ of a site $x$ are said to be \emph{opposing} if there exists $i$ such that $y_1=x+e_i$ and $y_2=x-e_i$, or vice-versa.

The next few definitions allow us to be more specific about where we are looking for sites protecting a given site. The extra control we gain over the positions of these sites is not needed in this section, but it will be necessary for the stability result in the next section.

A function $C:[d]\to\{-1,0,1,\ast\}$ is called a \emph{compatibility function}. A site $y$ is \emph{$C$-compatible} with a site $x$ if the following three conditions hold:
\begin{enumerate}
\item $y_i-x_i \geq 0$ if $C(i)=1$;
\item $y_i-x_i \leq 0$ if $C(i)=-1$;
\item $y_i=x_i$ if $C(i)=0$.
\end{enumerate}
If $C(i)=\ast$ then there is no restriction on $y_i$. Given a compatibility function $C$, we write $\pos(C)$ for the set $\{i:C(i)=1\}$ of \emph{positive coordinates}, $\neg(C)$ for the set $\{i:C(i)=-1\}$ of \emph{negative coordinates}, $\fixed(C)$ for the set $\{i:C(i)=0\}$ of \emph{fixed coordinates}, and $\free(C)$ for the set $\{i:C(i)=\ast\}$ of \emph{free coordinates}. For a site $x$, an integer $k\geq 0$ and a compatibility function $C$, let $P_k^C(x)$ denote the set of all protected sites in $S_k(x)$ that are $C$-compatible with $x$. The \emph{$i$-restriction} of a compatibility function $C$ is the compatibility function $C'$ that satisfies $C'(i)=0$ and $C'(j)=C(j)$ for all $j\neq i$.

The following lemma for subcritical $(d+1)$-neighbour bootstrap percolation is proved in \cite{BHSU}, although it is not explicitly stated, and it is also a special case of Lemma \ref{le:sub} in this paper.

\begin{lemma}\label{le:d+1}
Let $k\in\N$ and let $d\geq 2$. Suppose that $x \in B_k$ is protected under $(d+1)$-neighbour bootstrap percolation. Let $C$ be a compatibility function with no fixed coordinates and let $f = |\free(C)|$ be the number of free coordinates of $C$. Then
\begin{equation}
|P_k^C(x)| \geq \binom{f}{k}. \tag*{\qed}
\end{equation}
\end{lemma}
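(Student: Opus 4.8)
The plan is to prove Lemma~\ref{le:d+1} by induction on $k$, tracking the free coordinates through the process. The base case $k=0$ is immediate: $S_0(x)=\{x\}$, and $x$ is protected by hypothesis, so $|P_0^C(x)|\geq 1=\binom{f}{0}$. For the inductive step, fix $k\geq 1$ and a compatibility function $C$ with $\fixed(C)=\emptyset$ and $|\free(C)|=f$. The key observation is that if $x$ is protected under $(d+1)$-neighbour bootstrap percolation, then in particular $x\notin A_{t-\|x\|_1}$, which means $x$ has at most $d$ infected neighbours at the relevant earlier time; equivalently, $x$ has at least one neighbour $y$ with $\|y\|_1=\|x\|_1-1$ \emph{or} at least one pair of opposing neighbours is not fully infected in a way that forces a protected neighbour strictly closer to the origin. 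More carefully, since $(d+1)>d$, the site $x$ cannot be infected by its $2d$ neighbours plus itself unless all $2d$ neighbours are infected by the prior time; hence there exists a neighbour $y$ of $x$ with $\|y\|_1<\|x\|_1$ that is protected.

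The heart of the argument is then to set up the induction so that this protected neighbour lives in a sphere of radius $k-1$ around a shifted centre, with a compatibility function that has gained one free coordinate. Concretely, I would argue that for each choice of a free coordinate direction $i\in\free(C)$, and each sign $\varepsilon\in\{+1,-1\}$, one can either find a protected site in $P_k^C(x)$ sitting one step further out in direction $\varepsilon e_i$, or else one is forced (via the ``at least one protected neighbour strictly closer'' fact applied iteratively) to produce protected sites in a copy of $S_{k-1}$ with $f-1$ free coordinates, to which induction applies giving $\binom{f-1}{k-1}$ such sites; one then also accounts for the ``stay in the same hyperplane'' contribution giving a further $\binom{f-1}{k}$ sites, and the Pascal identity $\binom{f-1}{k-1}+\binom{f-1}{k}=\binom{f}{k}$ closes the induction. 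The precise bookkeeping — showing these two families of protected sites are disjoint and both genuinely lie in $P_k^C(x)$ — is where the compatibility-function and $i$-restriction machinery introduced just before the lemma does its work: the $i$-restriction $C'$ of $C$ is exactly the compatibility function one passes to when one fixes coordinate $i$, and it satisfies $|\free(C')|=f-1$.

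I expect the main obstacle to be proving that the protected neighbour closer to the origin can be chosen to remain $C$-compatible with $x$ (or $C'$-compatible after restriction) — that is, controlling \emph{which} direction the closer protected neighbour lies in. Naively one only knows \emph{some} neighbour at distance $\|x\|_1-1$ is protected, but to feed the induction we need it in a prescribed sign pattern relative to the free coordinates. Resolving this likely requires exploiting that $x$ has at most $d$ infected neighbours out of $2d$, so among the $d$ opposing pairs at least one pair contributes a protected (closer) site, and then using $C$-compatibility on the non-free coordinates (which pin $y_j$ relative to $x_j$) to force the defect into a free direction. This is exactly the kind of localized combinatorial argument that, in the $r=d$ case of \cite{BHSU}, is one-dimensional and nearly trivial, but here must be carried out with the extra coordinate directions in play; fortunately, since this is only the subcritical $(d+1)$-neighbour statement (the genuine dimension of the geometry being $d-(d+1)+1=0$), the induction stays essentially one-dimensional in $k$ and the $\binom{f}{k}$ bound is clean.
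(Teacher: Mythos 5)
Your proposal takes a genuinely different route from the paper, and it has real gaps.

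The paper does not prove Lemma~\ref{le:d+1} via a Pascal-identity/hyperplane decomposition at all; it instead points to the double-counting argument in the proof of Lemma~\ref{le:sub}, specialized to $r=d+1$ and adapted to track $C$-compatibility. Concretely: build the bipartite graph with parts $P_{k-1}^C(x)$ and $P_k^C(x)$. Any $z\in P_k^C(x)$ has at most $k$ nonzero coordinates of $z-x$ and hence at most $k$ neighbours in $S_{k-1}(x)$. Any $y\in P_{k-1}^C(x)$, being protected under the $(d+1)$-neighbour rule, has at least $2d-(d+1)+1=d$ protected neighbours; at most $k-1$ of these lie in $S_{k-2}(x)$ (at most $k-1$ nonzero coordinates of $y-x$), and at most $d-f$ of the remainder fail to be $C$-compatible (there are only $d-f$ non-free directions, and a $C$-compatible $y$ has a non-$C$-compatible outward neighbour only in such a direction when the relevant coordinate of $y-x$ is zero). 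So $y$ has at least $f-k+1$ neighbours in $P_k^C(x)$, whence $k\,|P_k^C(x)|\geq(f-k+1)\,|P_{k-1}^C(x)|$; iterating from $|P_0^C(x)|=1$ gives $|P_k^C(x)|\geq\binom{f}{k}$. This sidesteps all the case analysis you are worried about.

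By contrast, your sketch has two substantive problems. First, the step ``since $(d+1)>d$, \dots hence there exists a neighbour $y$ of $x$ with $\|y\|_1<\|x\|_1$ that is protected'' is simply false and also conflates the centre $x$ with the origin: $x$ being protected only says it has at most $d$ infected neighbours, which gives no information about where they sit relative to the origin, and the conclusion of the lemma concerns the sphere $S_k(x)$, not the origin. Second, and more fundamentally, your Pascal decomposition requires that for some free coordinate $i$ at least one of $x\pm e_i$ is protected; this can fail. If all $2f$ free-direction neighbours of $x$ are infected (possible whenever $f\leq d/2$), there is no free direction to shift the centre into, and your ``$\binom{f-1}{k-1}$ from shifting plus $\binom{f-1}{k}$ from the hyperplane'' bookkeeping never gets off the ground. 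Shifting in a non-free direction keeps $f$ the same and only gives $\binom{f}{k-1}$, which is not enough when $k<(f+1)/2$. There is also a subsidiary issue with the hyperplane term: identifying $\{z:z_i=x_i\}$ with $\Z^{d-1}$ turns the $(d+1)$-neighbour rule into the $(d'+2)$-neighbour rule for $d'=d-1$, so the lemma as you want to invoke it does not directly apply to the restricted configuration; one would need a version allowing fixed coordinates, which is exactly what the double-counting approach avoids needing.
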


We shall use Lemma \ref{le:d+1} in the proof of the next lemma, which will be the key lemma in the proof of the first extremal theorem.

\begin{lemma}\label{le:key}
Let $k \in \N$ and let $2\leq r\leq d$. Suppose that $x \in B_k$ is protected under $r$-neighbour bootstrap percolation. Let $C$ be a compatibility function with no fixed coordinates and let $f = |\free(C)|$ be the number of free coordinates of $C$. Then
\[
|P_k^C(x)| \geq \sum_{i_1=0}^k \sum_{i_2=0}^{i_1} \cdots \sum_{i_{d-r+1}=0}^{i_{d-r}} \binom{f}{i_{d-r+1}}.
\]
\end{lemma}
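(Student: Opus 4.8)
The plan is to prove the bound by induction on the pair $(k, d-r)$, mirroring the structure of the combinatorial identity in Lemma~\ref{le:minibinom} and reducing to the subcritical case of Lemma~\ref{le:d+1} when $d-r$ hits zero (more precisely, via the $(d+1)$-neighbour statement after appropriate reindexing). The base case $k=0$ is immediate since $P_0^C(x) = \{x\}$ has size $1 = \binom{f}{0}$, matching the empty iterated sum. For the base case $d-r=0$, the bound reads $|P_k^C(x)| \geq \sum_{i_1=0}^k \binom{f}{i_1}$; here the process is the ordinary $d$-neighbour ($=r$-neighbour) rule, and one must show that at each "level" $i_1$ between $0$ and $k$ the sphere $S_k(x)$ contains at least $\binom{f}{i_1}$ protected $C$-compatible sites. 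This should follow by the same argument used in \cite{BHSU} for the $r=d$ case: peel off one free coordinate, use that a protected site must have "enough" protected neighbours closer to the origin direction, and invoke Lemma~\ref{le:d+1} on the resulting lower-dimensional configuration.

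For the induction step, suppose $x \in B_k$ is protected under the $r$-neighbour rule with $d-r \geq 1$ and $k \geq 1$. Since $x$ is protected it is uninfected at time $t - \|x\|_1$; because it never becomes infected by that time, fewer than $r$ of its $2d$ neighbours are protected — equivalently, at least $2d - r + 1$ neighbours are protected. The key geometric observation is that among the coordinate directions $i \in \free(C)$, in at least one direction $i_0$ \emph{both} opposing neighbours $x + e_{i_0}$ and $x - e_{i_0}$ are protected: indeed, at most $r-1$ of the $2d$ neighbours fail to be protected, and since $2(r-1) < 2d$ when... more carefully, one argues via a counting/pigeonhole argument on which of the $d$ opposing pairs can contain a non-protected site, combined with the fact that $x$ has distance structure forcing protectedness to propagate inward. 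I would then split $S_k(x)$ according to the $i_0$-coordinate: the "slab" $\{y : y_{i_0} = x_{i_0}\}$ contributes, by applying the induction hypothesis in dimension with one fewer free coordinate handled via the $i_0$-restriction of $C$ (this lowers the relevant sum by passing from the $k$-sum to... ), while the two half-spaces $y_{i_0} > x_{i_0}$ and $y_{i_0} < x_{i_0}$ each contribute a copy of the radius-$(k-1)$, one-fewer-free-coordinate bound by projecting onto the neighbour of $x$ in that direction and using that it too is protected. Summing these three contributions and applying Lemma~\ref{le:minibinom} (with the roles of $k$, $f$ played here) recovers exactly the claimed iterated sum for $(k, d-r)$.

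The main obstacle I anticipate is making the bookkeeping of disjoint subsets of $S_k(x)$ rigorous: one must verify that the slab and the two half-spheres are genuinely disjoint (clear), that they partition $P_k^C(x)$ with nothing lost (clear), but crucially that the sites used in the induction for the half-spheres are the protected, $C$-compatible points of $S_{k-1}$ relative to the neighbour $x \pm e_{i_0}$ — this requires checking that $C$-compatibility with $x$ restricted to a half-space corresponds correctly to $C$-compatibility (with the appropriate modified compatibility function, which becomes $C$ with $i_0$ turned into a $+1$ or $-1$ coordinate) with $x \pm e_{i_0}$, and that the neighbour is itself protected under the $r$-neighbour rule with the same $d, r$. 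The other delicate point is the existence of the direction $i_0 \in \free(C)$ with both opposing neighbours protected: this needs the hypothesis $r \geq 2$ (so that at most $2d - 2 \leq 2d-r$ ... rather, at least $2d-r+1 \geq d+1$ neighbours are protected, forcing a full opposing pair among any $d$ pairs) together with $C$ having no fixed coordinates so that all $d$ directions are available for the pigeonhole. Handling the case where the forced opposing pair lies in a positive or negative coordinate of $C$ rather than a free one — which can happen — requires a slightly more careful argument and is, I expect, where the proof in the paper will spend most of its effort.
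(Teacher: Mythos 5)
Your proposed three--way decomposition (two half-spaces around $x\pm e_{i_0}$ plus the slab $\{y:y_{i_0}=x_{i_0}\}$, summed via Lemma~\ref{le:minibinom}) is in fact sound whenever a free opposing protected pair exists: the arithmetic exactly matches Lemma~\ref{le:minibinom} after the slab is reinterpreted as a $(d-1,r)$ problem with $f-1$ free coordinates. This is a clean one-step version of what the paper does all at once in its ``many opposing pairs'' case, where it recursively peels off $d-r+1$ opposing pairs and closes with Lemma~\ref{le:d+1}, summing via Lemma~\ref{le:megabinom} instead of Lemma~\ref{le:minibinom}.

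The genuine gap is the existence of the direction $i_0$. Your pigeonhole establishes only that $x$ has at least $d-r+1$ pairs of opposing protected neighbours; it does \emph{not} put any of them in a free coordinate. Since $x-e_i$ fails $C$-compatibility whenever $i\in\pos(C)$, a $C$-compatible opposing pair can only sit in a free coordinate, and when $f$ is small (roughly $f<r$) it is entirely possible that no free coordinate supplies such a pair even though plenty of non-free coordinates do. You flag this possibility as a ``slightly more careful argument,'' but in fact it is a structurally distinct case requiring a different decomposition: the paper counts $C$-compatible protected neighbours and shows that, when there are at most $d-r$ $C$-compatible opposing pairs, at least one protected $C$-compatible neighbour must lie in a \emph{positive} direction, say $x+e_1$, and then performs a two-way split $P_{k-1}^C(x+e_1)\sqcup P_k^{C'}(x)$ where $C'$ is the $1$-restriction of $C$. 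Note this slab piece keeps $f$ free coordinates (since the restricted coordinate was positive, not free), and the two-piece arithmetic still closes via Lemma~\ref{le:minibinom}. Without this second branch of the induction, your argument does not cover all inputs, and the proof is incomplete.

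A secondary point worth flagging: you claim the ``forced opposing pair'' step needs only $r\geq 2$ together with $C$ having no fixed coordinates; neither is the relevant constraint. What matters is the interaction between the number of free coordinates $f$ and the threshold $r$, and in particular the dichotomy the paper draws is on the number of $C$-compatible opposing protected pairs (at most $d-r$ versus at least $d-r+1$), which is not the same as the raw opposing-pair count from your pigeonhole.
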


In the applications of Lemma \ref{le:key} in this section (but not in the next) we shall always take $\pos(C)=\{i:x_i>0\}$, $\neg(C)=\{i:x_i<0\}$, $\free(C)=\{i:x_i=0\}$, and $\fixed(C)=\emptyset$. It may be helpful to have this in mind during the proof.

Here is an outline of the proof. Without loss of generality we assume that $\pos(C)=[d-f]=[d]\setminus\free(C)$. The argument runs by induction on the sum $(d-r)+k+f$. The base cases and the case $r=d$ are proved in \cite{BHSU}. There are two possibilities for the induction step. First, suppose that $x$ has at least $d-r+1$ pairs of opposing protected neighbours. We may assume that they are $x\pm e_{d-f+1},\dots,x\pm e_{2d-f-r+1}$. Divide up the sites above $x$ in $B_k$ as follows (thinking of $C$ as in the previous paragraph, and `above' now as being relative to the partial order imposed on sites by $C$). First, take the sites above either $x+e_{d-f+1}$ or $x-e_{d-f+1}$. These sets look like half spaces, in the sense that the only restriction is an inequality on $(d-f+1)$th coordinate. Next, take the sites not in either of those sets (so their $(d-f+1)$th coordinate is equal to that of $x$), but which are above either $x+e_{d-f+2}$ or $x-e_{d-f+2}$. These sets look like half-spaces inside hyperplanes, because of the restriction that the $(d-f+1)$th coordinate is equal to that of $x$ and that the next coordinate must satisfy an inequality. We continue, looking at smaller sets each time. Inside each of these sets, which are disjoint, we apply the induction hypothesis separately, and we use Lemma \ref{le:megabinom} to show that we have found the right number of protected sites. This completes the case when $x$ has lots of opposing protected neighbours. Now suppose $x$ has at most $d-r$ pairs of opposing protected neighbours. In this case, which is the easier of the two, $x$ must have a protected neighbour in a direction among the first $d-f$ coordinates, which we may assume is $x+e_1$. By induction, there are lots of protected sites above $x+e_1$, and also by induction we can find lots of protected sites above $x$ but with first coordinate equal to that of $x$. It turns out that the total number of protected sites we get is the right number.

\begin{proof}[Proof of Lemma \ref{le:key}]
Without loss of generality, let $\pos(C)=[d]\setminus\free(C)=[d-f]$. Let $x=(x_1,\dots,x_d)$ and suppose that $x_i\geq 0$ for all $i\in[d]$.

The proof is by induction on $q:=(d-r)+f+k$. If $q=0$ then we must have $k=0$ and the claim is simply that $x$ itself is protected, which is trivial. The lemma is proved for all $k$ and $f$ when $r=d$ in \cite{BHSU}. We divide the remainder of the proof into two cases, according to whether $x$ has many or few pairs of opposing $C$-compatible protected neighbours.

\begin{figure}[ht]
  \centering
  \begin{tikzpicture}[scale=0.5,>=latex]
    \fill[gray!30] (30:11) ++(0,1) -- ++(-20:1) -- ++(0,-1) -- ++($ (210:10) +(-20:8) $) -- ++(210:1) -- ++($ (160:8) +(210:10) $) -- ++(160:1) -- ++(0,1) -- ++($ (30:10) +(160:8) $) -- ++(30:1) -- ++($ (-20:8) +(30:10) $);
    \fill[gray!90] (0,0) -- ++(-20:1) -- ++(30:1) -- ++(0,1) -- ++(160:1) -- ++(210:1) -- ++(0,-1);

    \draw (0,8) -- ++(-20:1) -- ++(30:1) -- ++(160:1) -- ++(210:1);
    \draw[densely dashed] (0,0) -- ++(-20:1) -- ++(30:1) -- ++(160:1) -- ++(210:1) -- ++(0,1) -- ++(-20:1) -- ++(30:1) -- ++(160:1) -- ++(210:1) (-20:1) -- ++(0,1) ++(30:1) -- ++(0,-1) ++(160:1) -- ++(0,1);
    \draw[densely dashed] (210:10) -- ++($ (30:10) +(160:8) $) -- ++(0,1) ++(0,-1) -- ++(30:1) -- ++(0,1) ++(0,-1) -- ++($ (-20:8) +(30:10) $) -- ++(0,1) ++(0,-1) -- ++(-20:1) ++(160:1) ++(0,1) -- ++($ (210:10) +(160:8) $);
    \draw ++(210:10) -- ++(0,1) -- ++($ (30:10) +(0,7) $) -- ++(30:1) -- ++($ (0,-7) +(30:10) $);
    \draw (-20:1) ++(210:10) -- ++(0,1) -- ++($ (30:10) +(0,7) $) -- ++(30:1) -- ++($ (0,-7) +(30:10) $) -- ++(0,-1);
    \draw (30:11) ++(0,1) -- ++(-20:1) -- ++($ (210:10) +(-20:8) $) -- ++(210:1) -- ++($ (160:8) +(210:10) $) -- ++(160:1) -- ++($ (30:10) +(160:8) $);
    \draw (30:11) ++(-20:1) -- ++($ (210:10) +(-20:8) $) -- ++(210:1) -- ++($ (160:8) +(210:10) $) -- ++(160:1);
    \draw (160:8) ++(0,1) -- ++($ (-20:8) +(0,7) $) ++(-20:1) -- ++($ (0,-7) +(-20:8) $) -- ++(0,-1) ++(30:1) -- ++(0,1) -- ++($ (160:8) +(0,7) $) ++(160:1) ++($ (0,-8) +(160:8) $);
  \end{tikzpicture}
  \caption{Both this figure and Figure \ref{fi:case2} depict the set $B:=\{(x,y,z)\in\Z^3: z\geq 0, \, |x|+|y|+|z|\leq k\}$ with the origin represented by the dark cell at the centre. Letting $C$ be the compatibility function $C=(+,\ast,\ast)$, the set $B$ may also be viewed as the intersection of the $\ell_1$ ball $B_t(0)$ with the set of sites that are $C$-compatible with $0$. Suppose $0$ is protected under the $2$-neighbour model. In this example, Case $1$ of the proof of Lemma \ref{le:key} corresponds to only three of the four neighbours of $0$ in the hyperplane $H:=\{(x,y,0)\in\Z^3\}$ (shown here as the shaded area) being protected. This implies that $e_3$, the site immediately above $0$, must be protected. The proof proceeds by showing inductively that there must be many protected sites above $e_3$, and just enough additional protected sites inside $H$. Note that these two sets of protected sites are clearly disjoint.}
  \label{fi:case1}
\end{figure}
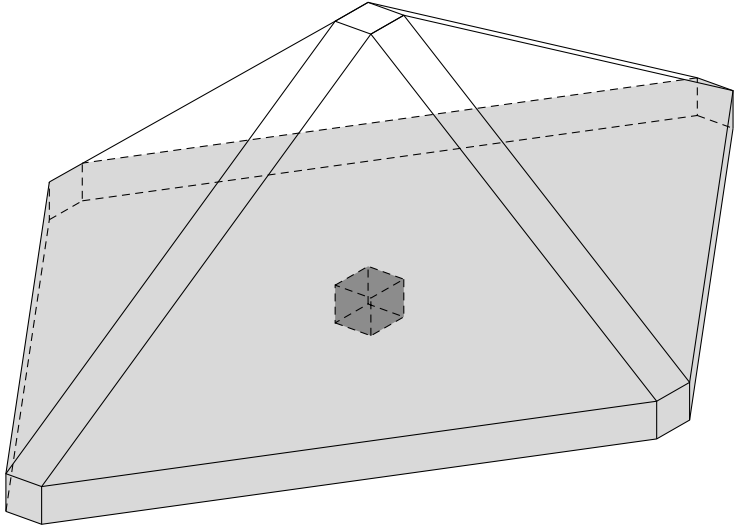

\emph{Case 1: $x$ has at most $d-r$ pairs of opposing $C$-compatible protected neighbours.} The site $x$ is protected, so it must have at least $2d-r+1$ protected neighbours. Exactly $d-f$ neighbours of $x$ are not $C$-compatible, so $x$ must have at least $d+f-r+1$ $C$-compatible protected neighbours. Now, $C$ has $f$ free coordinates, and $x$ has at most $d-r$ pairs of opposing $C$-compatible protected neighbours, so it has at most $d+f-r$ protected neighbours of the form $x+e_i$ or $x-e_i$, where $i\in\{d-f+1,\dots,d\}$. It follows that $x$ has at least one protected neighbour of the form $x+e_i$, for some $i\in[d-f]$. Let us assume that $x':=x+e_1$ is protected. Observe that $P_{k-1}^C(x') \subset P_k^C(x)$, since $x'$ differs from $x$ only in one of its positive coordinates. By induction,
\begin{equation}\label{eq:ineq1}
|P_{k-1}^C(x')| \geq \sum_{i_1=0}^{k-1} \sum_{i_2=0}^{i_1} \cdots \sum_{i_{d-r+1}=0}^{i_{d-r}} \binom{f}{i_{d-r+1}}.
\end{equation}

Let $C'$ be the $1$-restriction of $C$. The remaining sites that we need to complete this case of the proof will be in $P_k^{C'}(x)$. A three-dimensional example is depicted in Figure \ref{fi:case1}. Observe that
\begin{equation}\label{eq:keydisjoint}
P_{k-1}^C(x') \cap P_k^{C'}(x) = \emptyset,
\end{equation}
because sites in $P_{k-1}^C(x')$ have first coordinate at least $x_1+1$ and sites in $P_k^{C'}(x)$ have first coordinate exactly $x_1$. Let $U$ be the codimension $1$ subspace given by
\[
U = \{y\in\Z^d:y_1=x_1\}.
\]
The set of sites which are $C'$-compatible with $x$ in $\Z^d$ is the same as the set of sites which are $C$-compatible with $x$ in $U$. Thus, by induction there are at least
\begin{equation}\label{eq:ineq2}
\sum_{i_2=0}^k \sum_{i_3=0}^{i_2} \cdots \sum_{i_{d-r+1}=0}^{i_{d-r}} \binom{f}{i_{d-r+1}}
\end{equation}
$C$-compatible protected sites at distance $k$ from $x$ all contained in $U$. By \eqref{eq:ineq1}, \eqref{eq:keydisjoint} and \eqref{eq:ineq2}, the total number number of $C$-compatible protected sites at distance $k$ from $x$ is
\begin{align*}
|P_k^C(x)| &\geq \sum_{i_1=0}^{k-1} \sum_{i_2=0}^{i_1} \cdots \sum_{i_{d-r+1}=0}^{i_{d-r}} \binom{f}{i_{d-r+1}} + \sum_{i_2=0}^k \sum_{i_3=0}^{i_2} \cdots \sum_{i_{d-r+1}=0}^{i_{d-r}} \binom{f}{i_{d-r+1}} \\
&= \sum_{i_1=0}^k \sum_{i_2=0}^{i_1} \cdots \sum_{i_{d-r+1}=0}^{i_{d-r}} \binom{f}{i_{d-r+1}},
\end{align*}
as claimed.

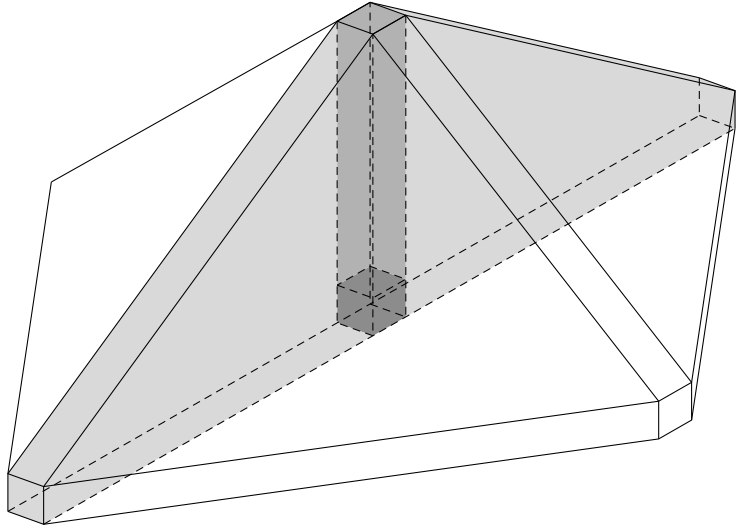
\begin{figure}[ht]
  \centering
  \begin{tikzpicture}[scale=0.5,>=latex]
    \fill[gray!90] (0,0) -- ++(-20:1) -- ++(30:1) -- ++(0,1) -- ++(160:1) -- ++(210:1) -- ++(0,-1);
    \fill[gray!60] (0,1) -- ++(30:1) -- ++(-20:1) -- ++(0,7) -- ++(160:1) -- ++(210:1) -- ++(0,-7);
    \fill[gray!30] (0,0) -- ++(0,8) -- ++($ (0,-7) +(210:10) $) -- ++(0,-1) -- ++(-20:1) -- ++ (30:10) -- ++(160:1);
    \fill[gray!30] (-20:1) ++(30:1) -- ++(30:10) -- ++(0,1) -- ++(160:1) -- ++($ (210:10) +(0,7) $) -- ++(-20:1) -- ++(0,-8);
    \draw[densely dashed] (0,8) -- ++(0,-8) -- ++(-20:1) -- ++(0,8) ++(30:1) -- ++(0,-8) -- ++(160:1) -- ++(0,8);
    \draw (0,8) -- ++(-20:1) -- ++(30:1) -- ++(160:1) -- ++(210:1);
    \draw[densely dashed] (0,1) -- ++(-20:1) -- ++(30:1) -- ++(160:1) -- ++(210:1);
    \draw[densely dashed] (210:10) -- ++(30:21) -- ++(0,1) (-20:1) ++(210:10) -- ++(30:21) -- ++(160:1);
    \draw ++(210:10) -- ++(0,1) -- ++($ (30:10) +(0,7) $) -- ++(30:1) -- ++($ (0,-7) +(30:10) $);
    \draw (-20:1) ++(210:10) -- ++(0,1) -- ++($ (30:10) +(0,7) $) -- ++(30:1) -- ++($ (0,-7) +(30:10) $) -- ++(0,-1);
    \draw (30:11) ++(0,1) -- ++(-20:1) -- ++($ (210:10) +(-20:8) $) -- ++(210:1) -- ++($ (160:8) +(210:10) $) -- ++(160:1) -- ++($ (30:10) +(160:8) $);
    \draw (30:11) ++(-20:1) -- ++($ (210:10) +(-20:8) $) -- ++(210:1) -- ++($ (160:8) +(210:10) $) -- ++(160:1);
    \draw (160:8) ++(0,1) -- ++($ (-20:8) +(0,7) $) ++(-20:1) -- ++($ (0,-7) +(-20:8) $) -- ++(0,-1) ++(30:1) -- ++(0,1) -- ++($ (160:8) +(0,7) $) ++(160:1) ++($ (0,-8) +(160:8) $);
  \end{tikzpicture}
  \caption{In this figure, which has the same setup as Figure \ref{fi:case1}, Case $2$ of the proof of Lemma \ref{le:key} corresponds to the situation in which all four neighbours of the origin that intersect the hyperplane $H$ are protected. The sites that are $C_1^+$-compatible with $e_1$ or $C_1^-$ compatible with $-e_1$ are shown in the unshaded areas; the sites that are $C_2^+$-compatible with $e_2$ or $C_2^-$ compatible with $-e_2$ are shown in the lightly shaded triangles; and the sites that are $C_3$-compatible with $e_3$ are shown in the medium shaded column. Each of these fives disjoint sets must contain many protected sites.}
  \label{fi:case2}
\end{figure}

\emph{Case 2: $x$ has at least $d-r+1$ pairs of opposing $C$-compatible protected neighbours.} Without loss of generality the opposing protected neighbours are $x\pm e_{d-f+1},\dots,x\pm e_{2d-f-r+1}$. (We have $2d-f-r+1\leq d$, or equivalently $f\geq d-r+1$, because every coordinate in which $x$ has a pair of $C$-compatible neighbours must necessarily be free.) Let $C_1=C$ and for $i=1,\dots,d-r+1$ let $C_{i+1}$ be the $(d-f+i)$-restriction of $C_i$. Also, for $i=1,\dots,d-r+1$, let $C_i^+$ be the compatibility function satisfying $C_i^+(d-f+i)=1$ and $C_i^+(j)=C_i(j)$ for $j\neq i$, and let $C_i^-$ be the compatibility function satisfying $C_i^-(d-f+i)=-1$ and $C_i^-(j)=C_i(j)$ for $j\neq i$. The key observation here is that the $2d-2r+3$ sets
\begin{gather*}
P_{k-1}^{C_1^+}(x+e_{d-f+1}) \text{ and } P_{k-1}^{C_1^-}(x-e_{d-f+1}), \\
P_{k-1}^{C_2^+}(x+e_{d-f+2}) \text{ and } P_{k-1}^{C_2^-}(x-e_{d-f+2}), \\
\vdots \\
P_{k-1}^{C_{d-r+1}^+}(x+e_{2d-f-r+1}) \text{ and } P_{k-1}^{C_{d-r+1}^-}(x-e_{2d-f-r+1}), \\
\text{and } P_{k-1}^{C_{d-r+2}}(x)
\end{gather*}
 are all pairwise disjoint, so we can obtain a bound on $P_k^C(x)$ by bounding the sizes of each of these sets individually. The reason for their disjointness is as follows. First, for each $i$, $P_{k-1}^{C_i^+}(x+e_{d-f+i})$ and $P_{k-1}^{C_i^-}(x-e_{d-f+i})$ are easily seen to be disjoint because the $(d-f+i)$th coordinates of sites in the first set are all greater than the $(d-f+i)$th coordinate of $x$, and the $(d-f+i)$th coordinates of sites in the second set are all less than the $(d-f+i)$th coordinate of $x$. Furthermore, the sets that appear after $P_{k-1}^{C_i^+}(x+e_{d-f+i})$ and $P_{k-1}^{C_i^-}(x-e_{d-f+i})$ in the list, which are
\begin{gather*}
P_{k-1}^{C_{i+1}^+}(x+e_{d-f+i+1}) \text{ and } P_{k-1}^{C_{i+1}^-}(x-e_{d-f+i+1}), \\
\vdots \\
P_{k-1}^{C_{d-r+1}^+}(x+e_{2d-f-r+1}) \text{ and } P_{k-1}^{C_{d-r+1}^-}(x-e_{2d-f-r+1}), \\
\text{and } P_{k-1}^{C_{d-r+2}}(x)
\end{gather*}
only contain sites that have $(d-f+i)$th coordinate equal to that of $x$, by the definition of restrictions. Therefore these sets are all also disjoint from both $P_{k-1}^{C_i^+}(x+e_{d-f+i})$ and $P_{k-1}^{C_i^-}(x-e_{d-f+i})$. This proves the disjointness property. Figure \ref{fi:case2} illustrates an example of these disjoint sets in three dimensions.

The compatibility function $C_i$ has exactly $i-1$ fixed coordinates. In fact, all sites that are $C_i$-compatible with $x$ lie inside the $(d-i+1)$-dimensional affine subspace
\[
U_i = \{y\in\Z^d : y_{d-f+1}=x_{d-f+1}, \dots, y_{d-f+i-1}=x_{d-f+i-1}\}.
\]
(We define $U_1=\Z^d$.) Thus, when we are looking for $C_i$-compatible protected sites, we are really looking for protected sites inside a $(d-i+1)$-dimensional space. The function $C_i$ has $f-i+1$ free coordinates, and $C_i^+$ and $C_i^-$ each have $f-i$ free coordinates. By induction, for each $i=1,\dots,d-r+1$ we have
\begin{equation}\label{eq:protCiplus}
|P_{k-1}^{C_i^+}(x+e_{d-f+i})| \geq \sum_{i_1=0}^{k-1} \sum_{i_2=0}^{i_1} \cdots \sum_{i_{d-i-r+2}=0}^{i_{d-i-r+1}} \binom{f-i}{i_{d-i-r+2}},
\end{equation}
and a similar inequality holds for $|P_{k-1}^{C_i^-}(x-e_{d-f+i})|$. Furthermore, $C_{d-r+2}$ has $f-d+r-1$ free coordinates, and sites which are $C_{d-r+2}$-compatible with $x$ all lie in the $(r-1)$-dimensional affine subspace $U_{d-r+2}$. By Lemma \ref{le:d+1},
\begin{equation}\label{eq:protbase}
|P_{k-1}^{C_{d-r+2}}(x)| \geq \binom{f-d+r-1}{k}.
\end{equation}
Summing \eqref{eq:protCiplus} over $i=1,\dots,d-r+1$ and each choice of $+$ or $-$, and adding \eqref{eq:protbase} to the sum, we obtain precisely the right-hand side of the identity \eqref{eq:megabinom}. Lemma \ref{le:megabinom} then completes this case of the proof.
\end{proof}

\begin{corollary}\label{co:min}
Let $t\in\N$ and let $2\leq r\leq d$. Suppose that the origin is protected under $r$-neighbour bootstrap percolation. Then for $k=0,\dots,t$,
\[
|P(S_k)| \geq \sum_{i_1=0}^k \sum_{i_2=0}^{i_1} \cdots \sum_{i_{d-r+1}=0}^{i_{d-r}} \binom{d}{i_{d-r+1}}.
\]
Consequently,
\begin{equation}
|P(B_t)| \geq m_{d,r}(t) = \sum_{i_0=0}^t \sum_{i_1=0}^{i_0} \sum_{i_2=0}^{i_1} \cdots \sum_{i_{d-r+1}=0}^{i_{d-r}} \binom{d}{i_{d-r+1}}. \tag*{\qed}
\end{equation}
\end{corollary}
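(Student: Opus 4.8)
The plan is to read off the corollary directly from Lemma \ref{le:key}. For each $k\in\{0,1,\dots,t\}$ I would apply that lemma with $x=0$ and with the compatibility function $C$ defined by $C(i)=\ast$ for every $i\in[d]$, so that $C$ has no fixed coordinates and $f=|\free(C)|=d$. The hypotheses of Lemma \ref{le:key} are met: $0\in B_k$ for every $k\geq 0$, and since $\|0\|_1=0$ the assumption that the origin is protected is precisely the statement $0\notin A_t$ required by the lemma. Because every site is trivially $C$-compatible with $0$ when all coordinates of $C$ are free, we have $P_k^C(0)=P(S_k)$, and hence Lemma \ref{le:key} yields
\[
|P(S_k)| = |P_k^C(0)| \geq \sum_{i_1=0}^k \sum_{i_2=0}^{i_1} \cdots \sum_{i_{d-r+1}=0}^{i_{d-r}} \binom{d}{i_{d-r+1}},
\]
which is the first displayed bound. (The case $k=0$ is in any event immediate, since $P(S_0)=\{0\}$ and the right-hand side is $\binom{d}{0}=1$.)

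For the second assertion I would use that $B_t$ is the disjoint union $\bigsqcup_{k=0}^t S_k$, so that $|P(B_t)|=\sum_{k=0}^t |P(S_k)|$. Summing the bound just obtained over $k=0,\dots,t$ and renaming the outer summation variable $k$ as $i_0$ gives
\[
|P(B_t)| \geq \sum_{i_0=0}^t \sum_{i_1=0}^{i_0} \sum_{i_2=0}^{i_1} \cdots \sum_{i_{d-r+1}=0}^{i_{d-r}} \binom{d}{i_{d-r+1}} = m_{d,r}(t),
\]
the last equality being the definition \eqref{eq:mt} of $m_{d,r}(t)$. There is no real obstacle here: the substance of the corollary is entirely contained in Lemma \ref{le:key}, and the only things to verify are the bookkeeping points that the origin being protected matches the lemma's hypothesis, that the all-free compatibility function makes $P_k^C(0)$ coincide with $P(S_k)$, and that the spheres $S_0,\dots,S_t$ partition $B_t$.
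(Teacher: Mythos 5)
Your proof is correct and follows exactly the route the paper intends: the corollary is labelled with a $\qed$ precisely because it is the specialization of Lemma \ref{le:key} to $x=0$ with the all-free compatibility function (which is the choice $\pos(C)=\{i:x_i>0\}$, $\neg(C)=\{i:x_i<0\}$, $\free(C)=\{i:x_i=0\}$ singled out in the paragraph following Lemma \ref{le:key}), followed by summing over the spheres $S_0,\dots,S_t$ that partition $B_t$. The bookkeeping you verify—that $0$ protected means $0\notin A_t$, that $P_k^C(0)=P(S_k)$ when $C\equiv\ast$, and the partition of $B_t$—is exactly what is needed and is all there is to check.
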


Corollary \ref{co:min} is not quite what we have so far described as the first extremal result. It determines an upper bound for $\ex_{d,r}(t)$, namely $m_{d,r}(t)$, but we do not yet know that $\ex_{d,r}(t)=m_{d,r}(t)$. In order to prove that, we shall show that the size of a set that we know protects the origin, namely the set defined in \eqref{eq:extrset}, is equal to $m_{d,r}(t)$. Our next aim is to verify that claim, but before we do, we define the extremal sets in full generality.

A subset $K$ of $B_t(x)$ is \emph{$(d,r)$-canonical} if there is a subset $I$ of $[d]$ of size $r-1$ and $\epsilon_i\in\{-1,1\}$ for each $i\in I$ such that
\[
K =\{y\in B_t(x) : y_i-x_i\in\{0,\epsilon_i\} \text{ for all } i\in I\}.
\]
The elements of the set $K$ are called \emph{canonical sites}. The ball $B_t(x)$ will often be implicit, but where needed we call the parameter $t$ the \emph{radius} of $K$. The set $I$ is the \emph{orientation} of $K$ and its complement $[d]\setminus I$ is the \emph{alignment} of $K$. A coordinate $i\in I$ is an \emph{orientation coordinate} and a coordinate $j\in[d]\setminus I$ is an \emph{alignment coordinate}.

Canonical sets, as just defined and as in \eqref{eq:extrset}, are the natural candidates for extremal sets. Unfortunately they are not the only examples, although the other examples, of which there are only a constant number, only differ in the positions of the sites of degree $1$. More specifically, any extremal set can be obtained from a $(d,r)$-canonical set $K$ by applying the following algorithm. For each site $x\in K$ that has degree inside $K$ equal to $1$, and its unique neighbour $y\in K$, either keep $x$ or replace it by any other neighbour of $y$ not already in $K$. The following definitions formalize this.

Given $x$, and $I$ and $\epsilon_i$ as above, let
\[
E_j^+ = \{x + te_j\} \cup \{x + (t-1)e_j -\epsilon_i e_i : i\in I\}
\]
for $j\in[d]\setminus I$, and similarly let
\[
E_j^- = \{x - te_j\} \cup \{x - (t-1)e_j -\epsilon_i e_i : i\in I\}.
\]
for $j\in[d]\setminus I$. Let $E$ be any set consisting of exactly one site from each of the $E_j^+$ and each of the $E_j^-$, so $|E|=2(d-r+1)$. A subset $K'$ of $B_t(x)$ is \emph{$(d,r)$-semi-canonical} if there is a $(d,r)$-canonical set $K$ and a choice of $E$ (with the alignment and orientation given by $K$) such that
\[
K' = \big( K \setminus \{x+te_j,x-te_j:j\in[d]\setminus I\} \big) \cup E.
\]
We call the sites in $E$ the \emph{extreme sites} of the $(d,r)$-semi-canonical set $K'$.

Observe that there are
\[
g_{d,r} = \binom{d}{d-r+1} 2^{r-1} d^{2(d-r+1)}
\]
$(d,r)$-semi-canonical sets: $\binom{d}{d-r+1}$ choices of orientation $I$, $2^{r-1}$ choices of the $\epsilon_i$, and $d$ choices for each of the $2(d-r+1)$ extreme sites.

\begin{lemma}\label{le:canonsize}
Let $t\in\N$ and let $2\leq r\leq d$. Suppose $K\subset B_t$ is $(d,r)$-canonical. Then $|K| = m_{d,r}(t)$.
\end{lemma}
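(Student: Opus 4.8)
The plan is to reduce first to the single explicit set $P_{d,r}(t)$ of \eqref{eq:extrset}, then to count that set one $\ell_1$-sphere at a time, matching the answer against the iterated sum \eqref{eq:mt} by means of Lemma \ref{le:minibinom}.

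\emph{Reduction to $P_{d,r}(t)$.} Write $K=\{y\in B_t : y_i\in\{0,\epsilon_i\}\text{ for all }i\in I\}$ with $|I|=r-1$. For each $i\in I$ with $\epsilon_i=-1$, the map $y\mapsto(y_1,\dots,-y_i,\dots,y_d)$ is a bijection of $B_t$ preserving $\|\cdot\|_1$; composing all of these and then applying a coordinate permutation carrying $I$ onto $\{d-r+2,\dots,d\}$ transforms $K$ bijectively onto $P_{d,r}(t)=\{y\in B_t:y_{d-r+2},\dots,y_d\in\{0,1\}\}$. Hence $|K|=|P_{d,r}(t)|$, and it suffices to prove $|P_{d,r}(t)|=m_{d,r}(t)$.

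\emph{Slicing into spheres.} Directly from \eqref{eq:mt}, $m_{d,r}(t)=\sum_{k=0}^{t}N(k)$, where $N(k):=\sum_{i_1=0}^{k}\sum_{i_2=0}^{i_1}\cdots\sum_{i_{d-r+1}=0}^{i_{d-r}}\binom{d}{i_{d-r+1}}$. Also $P_{d,r}(t)$ is the disjoint union of its slices $P_{d,r}(t)\cap S_k$ for $0\le k\le t$, and each such slice equals $\{y\in\Z^d:\|y\|_1=k,\ y_{d-r+2},\dots,y_d\in\{0,1\}\}$, with no dependence on $t$. So the lemma follows once we show this set has size exactly $N(k)$ for every $k\ge0$. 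It is cleanest to prove the more uniform statement that for all integers $a\ge0$, $b\ge0$, $k\ge0$,
\[
g(k;a,b):=\bigl|\{(z,w)\in\Z^a\times\{0,1\}^b:\|z\|_1+\|w\|_1=k\}\bigr|=\sum_{i_1=0}^{k}\sum_{i_2=0}^{i_1}\cdots\sum_{i_a=0}^{i_{a-1}}\binom{a+b}{i_a}=:M(k;a,b),
\]
where for $a=0$ the empty iterated sum is read as $\binom{b}{k}$; taking $a=d-r+1$ and $b=r-1$ gives $M(k;a,b)=N(k)$, which is what is needed.

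\emph{The induction.} Induct on $a$. When $a=0$ both sides equal $\binom{b}{k}$. For $a\ge1$, conditioning on the first coordinate $z_1$ of $z$ (the case $z_1=0$ contributing $g(k;a-1,b)$, and each of the cases $z_1=\pm s$ with $s\ge1$ contributing $g(k-s;a-1,b)$) yields
\[
g(k;a,b)=g(k;a-1,b)+2\sum_{j=0}^{k-1}g(j;a-1,b).
\]
On the other hand, after relabelling summation indices, $2\sum_{j=0}^{k-1}M(j;a-1,b)$ is exactly $2\sum_{i_1=0}^{k-1}\sum_{i_2=0}^{i_1}\cdots\sum_{i_a=0}^{i_{a-1}}\binom{a+b-1}{i_a}$ while $M(k;a-1,b)$ is exactly $\sum_{i_2=0}^{k}\sum_{i_3=0}^{i_2}\cdots\sum_{i_a=0}^{i_{a-1}}\binom{a+b-1}{i_a}$; so Lemma \ref{le:minibinom}, applied with its ``$d-r+1$'' taken to be $a$, its ``$f$'' taken to be $a+b$, and its ``$k$'' taken to be $k$, says precisely that $M(k;a,b)=M(k;a-1,b)+2\sum_{j=0}^{k-1}M(j;a-1,b)$. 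Thus $g$ and $M$ obey the same recursion and agree at $a=0$, hence agree always; summing the resulting identity $|P_{d,r}(t)\cap S_k|=N(k)$ over $0\le k\le t$ gives $|P_{d,r}(t)|=\sum_{k=0}^{t}N(k)=m_{d,r}(t)$. The only point that needs care is the index-bookkeeping identification of the two terms on the right of Lemma \ref{le:minibinom} with $2\sum_{j=0}^{k-1}M(j;a-1,b)$ and $M(k;a-1,b)$; everything else is routine. (One could instead bypass the recursion: with $g(k;a,b)=\sum_{j=0}^{b}\binom{b}{j}V_a(k-j)$, where $V_a(\ell)=[x^\ell]\bigl((1+x)/(1-x)\bigr)^a$ is the $\ell_1$-sphere count in $\Z^a$, expanding $V_a$ by the binomial theorem and applying Vandermonde's identity gives $g(k;a,b)=\sum_{m}\binom{k-m+a-1}{a-1}\binom{a+b}{m}$, which equals $N(k)$ upon reading the iterated sum defining $N(k)$ as a sum over chains $k\ge i_1\ge\cdots\ge i_{d-r}\ge i_{d-r+1}=m$.)
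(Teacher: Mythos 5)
Your proof is correct and is essentially the paper's argument: after a WLOG reduction to a fixed canonical representative, both slice into $\ell_1$-spheres, decompose each slice over the value of the first alignment (free) coordinate to obtain the recursion, and close that recursion with Lemma~\ref{le:minibinom}. Your induction on $a$ is exactly the paper's induction on $d$ (since $r$ is held fixed, so $a=d-r+1$ decreases with $d$), just repackaged through the explicit counting function $g(k;a,b)$.
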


Note that a $(d,r)$-semi-canonical set has the same size as a $(d,r)$-canonical set (with the same radius), so Lemma \ref{le:canonsize} also applies to these sets.

\begin{proof}
The induction is on $d$. Let $K_k$ be the intersection of $K$ with $S_k$. We shall prove that
\[
|K_k| = \sum_{i_1=0}^k \sum_{i_2=0}^{i_1} \cdots \sum_{i_{d-r+1}=0}^{i_{d-r}} \binom{d}{i_{d-r+1}}
\]
for $k=0,\dots,t$, which will prove the lemma.

Let the alignment set be $[d-r+1]$ and the orientation set $[d]\setminus[d-r+1]$. When $d=r$ there is exactly one alignment coordinate, so
\[
|K_k| = \binom{r-1}{k} + 2\sum_{i=0}^{k-1} \binom{r-1}{i} = \sum_{i=0}^k \binom{r}{i},
\]
the second equality following from Lemma \ref{le:minibinom}.

Suppose the lemma holds for $d-1$. The set of sites in $K_k$ with first coordinate zero is a $(d-1,r)$-canonical set, so the number of such sites is
\[
\sum_{i_2=0}^k \cdots \sum_{i_{d-r+1}=0}^{i_{d-r}} \binom{d-1}{i_{d-r+1}}
\]
by the induction hypothesis. Now fix $i$ such that $0\leq i\leq k-1$; negative $i$ are treated similarly by symmetry. The set of sites in $K_k$ with first coordinate $k-i$ is a $(d-1,r)$-canonical set of radius $i$ in the affine subspace $\{x:x_1=k-i\}$, so by induction the number of such sites is
\[
\sum_{i_2=0}^i \cdots \sum_{i_{d-r+1}=0}^{i_{d-r}} \binom{d-1}{i_{d-r+1}}.
\]
Summing over $i$ and doubling to take account of the choice of sign of $x_1$, the number of sites in $K_k$ with first coordinate non-zero is
\[
2\sum_{i_1=0}^{k-1} \sum_{i_2=0}^{i_1} \cdots \sum_{i_{d-r+1}=0}^{i_{d-r}} \binom{d-1}{i_{d-r+1}}.
\]
The result now follows from Lemma \ref{le:minibinom}.
\end{proof}

Given that the origin is protected, we say that the set $P(B_t)$ is \emph{minimal} if it has size $m_{d,r}(t)$. For any minimal set $P(B_t)$, we define $l_{d,r}(t)$ to be the number of sites in $P(S_t)$, which by Lemmas \ref{le:key} and \ref{le:canonsize} is equal to $m_{d,r+1}(t)$ and to the size of the intersection of a $(d,r)$-canonical set with $S_t$. Thus,
\[
l_{d,r}(t) = \sum_{i_1=0}^t \sum_{i_2=0}^{i_1} \cdots \sum_{i_{d-r+1}=0}^{i_{d-r}} \binom{d}{i_{d-r+1}}.
\]
Usually $d$ and $r$ will be clear from the context, so we shall write $l(t)$ and $m(t)$ for $l_{d,r}(t)$ and $m_{d,r}(t)$ respectively. Given that the origin is protected, the set $P(S_t)$ is \emph{minimal} if it has size $l_{d,r}(t)$.

The following result, which is a combination of Corollary \ref{co:min} and Lemma \ref{le:canonsize}, is the full version of the first extremal theorem.

\begin{theorem}\label{th:min}
Let $t\in\N$ and let $2\leq r\leq d$. Suppose that the origin is protected under $r$-neighbour bootstrap percolation. Then $|P(S_k)| \geq l_{d,r}(k)$ for $k=0,\dots,t$, and
\[
|P(B_t)| \geq |K| = m_{d,r}(t),
\]
where $K$ is any $(d,r)$-canonical set.
\end{theorem}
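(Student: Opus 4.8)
The plan is to assemble the theorem from pieces that are already in place: Lemma \ref{le:key} (equivalently Corollary \ref{co:min}), Lemma \ref{le:canonsize}, and the definition of $l_{d,r}$. Almost no new work is required; the statement is a repackaging of results proved above, as the sentence preceding the theorem already indicates.

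First I would establish the lower bound on $|P(S_k)|$. Since the origin is protected and $0\in B_k$ trivially, I apply Lemma \ref{le:key} with $x=0$ and the compatibility function $C$ given by $C(i)=\ast$ for every $i\in[d]$; that is, $\pos(C)=\neg(C)=\fixed(C)=\emptyset$ and $\free(C)=[d]$, so $f=d$. Every site of $\T_n^d$ is $C$-compatible with $0$, hence $P_k^C(0)$ is exactly $P(S_k)$, and Lemma \ref{le:key} gives
\[
|P(S_k)| \geq \sum_{i_1=0}^k \sum_{i_2=0}^{i_1} \cdots \sum_{i_{d-r+1}=0}^{i_{d-r}} \binom{d}{i_{d-r+1}} = l_{d,r}(k)
\]
for each $k=0,\dots,t$, the equality being the definition of $l_{d,r}$. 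This is precisely Corollary \ref{co:min}, so it suffices to cite it.

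Next I would deduce the bound on $|P(B_t)|$. Since $B_t=\bigsqcup_{k=0}^t S_k$ is a disjoint union, $|P(B_t)| = \sum_{k=0}^t |P(S_k)| \geq \sum_{k=0}^t l_{d,r}(k)$, and prepending the outer summation $\sum_{i_0=0}^t$ to the iterated sum defining $l_{d,r}$ recovers exactly the expression $m_{d,r}(t)$ of \eqref{eq:mt}; so $|P(B_t)| \geq m_{d,r}(t)$, which is the ``Consequently'' clause of Corollary \ref{co:min}. Finally, Lemma \ref{le:canonsize} (together with the remark that $(d,r)$-semi-canonical sets have the same size as $(d,r)$-canonical sets of the same radius) gives $|K| = m_{d,r}(t)$ for any $(d,r)$-canonical $K\subset B_t$. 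Combining the two equalities with the inequality completes the proof.

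I expect no real obstacle at this stage: all the substance lies in the inductive proof of Lemma \ref{le:key} and in the separate induction behind Lemma \ref{le:canonsize}. The only point needing care here is the bookkeeping with the iterated sums — verifying that the index shift turning $\sum_{k=0}^t l_{d,r}(k)$ into $m_{d,r}(t)$ matches \eqref{eq:mt} exactly, and that the hypothesis of Lemma \ref{le:key} is genuinely met by the all-free compatibility function based at the origin.
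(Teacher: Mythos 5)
Your proposal is correct and takes essentially the same route as the paper: the paper itself introduces Theorem~\ref{th:min} as ``a combination of Corollary~\ref{co:min} and Lemma~\ref{le:canonsize},'' and you have simply unpacked that combination — applying Lemma~\ref{le:key} at $x=0$ with the all-free compatibility function to recover Corollary~\ref{co:min}, summing over the disjoint spheres $S_0,\dots,S_t$, and invoking Lemma~\ref{le:canonsize} for the equality $|K|=m_{d,r}(t)$.
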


The next theorem states that all minimal sets protecting the origin are semi-canonical. It is the second extremal theorem.

\begin{theorem}\label{th:extremal}
Let $t\geq 2$ and let $2\leq r\leq d$. Suppose that the origin is protected under $r$-neighbour bootstrap percolation and that $P(B_t)$ is minimal. Then $P(B_t)$ is $(d,r)$-semi-canonical.
\end{theorem}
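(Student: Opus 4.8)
The plan is to prove Theorem~\ref{th:extremal} by induction on the dimension $d$, the base case $d=r$ being precisely the corresponding result of \cite{BHSU}. So suppose $d>r$ and that the statement holds with $d$ replaced by $d-1$ (which is legitimate since $d-1\geq r$). The first step is to observe that minimality propagates downwards. Since $|P(B_t)|=\sum_{k=0}^t|P(S_k)|$ and $|P(S_k)|\geq l_{d,r}(k)$ by Theorem~\ref{th:min}, minimality of $P(B_t)$ forces $|P(S_k)|=l_{d,r}(k)$ for every $0\leq k\leq t$; in particular $|P(S_1)|=l_{d,r}(1)=2d-r+1$, so exactly $r-1$ of the $2d$ neighbours of the origin are unprotected. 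Running the proof of Lemma~\ref{le:key} (applied at $x=0$ with all coordinates free) with equality forced throughout, one further sees that in Case~2 every one of the disjoint pieces $P_{k-1}^{C_i^\pm}(\pm e_{d-f+i})$ and the base piece $P_{k-1}^{C_{d-r+2}}(0)$, and likewise the pieces arising in Case~1, must itself be minimal. I would record this as a short lemma: a minimal protecting configuration restricts to a minimal protecting configuration on each of the coordinate subspaces and half-spaces that occur in that argument.

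Next I would set up the dimension reduction. An elementary count shows that if the protected neighbours of the origin split into $p$ opposing pairs, $s$ singletons and $b$ coordinates with both $\pm e_i$ unprotected, then $p=d-r+1+b\geq d-r+1\geq 2$; thus there is always an \emph{alignment coordinate} $j$, meaning a coordinate with $e_j$ and $-e_j$ both protected. Fixing such a $j$, the hyperplane restriction principle (valid because $\pm e_j$ are protected and $P(B_t)$ is minimal) shows that the protected sites of $P(B_t)$ contained in $\{x:x_j=0\}\cong\Z^{d-1}$ form a minimal protecting configuration for the $(d-1,r)$ problem, and so by the induction hypothesis this set is $(d-1,r)$-semi-canonical. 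Iterating this over several alignment hyperplanes (each intersection being again minimal of one lower dimension, the iteration bottoming out at the already-known case $r=d$) one recovers the full canonical `core': every canonical site of degree at least $2$ lies in $P(B_t)$, and these slices force a single global orientation set $I$ of size $r-1$ with signs $\epsilon_i\in\{\pm1\}$; in particular this also forces $b=0$, i.e.\ the orientation coordinates are exactly the $r-1$ singleton directions, with $\epsilon_i$ the sign of the protected one of $\pm e_i$.

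It then remains to account for the sites not in the core. Since $|P(B_t)|=m_{d,r}(t)=|K|$ for the $(d,r)$-canonical set $K$ with orientation $I$ and signs $\epsilon_i$, and the core of $K$ is contained in $P(B_t)$, exactly $2(d-r+1)$ sites of $P(B_t)$ remain to be located, one `in place of' each tip $\pm te_j$ with $j$ an alignment coordinate. A final local argument — again using that the relevant one-lower-dimensional slice is minimal and hence semi-canonical, together with the fact that such a site must be a protected neighbour of the (already located) degree-$1$ boundary site $x\pm(t-1)e_j$ — shows that each of these sites lies in the corresponding star $E_j^+$ or $E_j^-$. Hence $P(B_t)$ is obtained from $K$ by the prescribed replacement of tips, i.e.\ $P(B_t)$ is $(d,r)$-semi-canonical.

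I expect the main obstacle to be this last reconstruction step: ensuring that the lower-dimensional semi-canonical structures produced on the various hyperplanes are mutually consistent — the same orientation $I$, the same signs $\epsilon_i$, and compatible with the local data at the origin — and, hand in hand with this, doing the degree-$1$ bookkeeping correctly, so that the only freedom is genuinely the choice of one site from each $E_j^{\pm}$ and no further perturbation can occur in the interior of the ball. A second, smaller technical point is establishing the hyperplane restriction principle itself in the form needed here (that an alignment-hyperplane slice of a minimal configuration is minimal), which underlies the whole induction.
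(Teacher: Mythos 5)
The paper proves Theorem~\ref{th:extremal} quite differently from what you propose: it fixes $d$ and works by induction on the radius $k$ of the sphere. Having first forced $P(S_1)$ to have exactly $d-r+1$ opposing pairs via a degree count in $S_2$ (ruling out $d-r+2$ pairs by comparing the sum $\sum_{x\in P_2} d_{P_1}(x)$ against the size of the set $R_2$ of sites in $S_2$ of degree $\geq 2$ into $P_1$), it then compares total degrees between consecutive spheres --- the sets $R_k$ of sites in $S_k$ with degree $\geq 2$ into $P_{k-1}$ play the central role --- to show (Part~(A)) that a canonical layer $P_{k-1}$ forces $P_k$ to be semi-canonical, and (Part~(B)) that a semi-canonical but non-canonical layer cannot be extended into $S_k$ at all. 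It never passes to lower dimension, and the hyperplane restriction principle does not appear in this proof; that tool is developed for Theorem~\ref{th:stability}.

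Your proposal instead inducts on $d$ via hyperplane slicing, which is essentially the machinery the paper develops for Theorem~\ref{th:stability}, not for Theorem~\ref{th:extremal}. The part you do carry out --- that $\pm e_j$ protected plus sphere-minimality forces the slice $P(S_k)\cap\{x:x_j=0\}$ to be minimal, and thence (after the non-immediate check that $\Z^d$-protection restricts to $\Z^{d-1}$-protection on a hyperplane) semi-canonical, which in turn forces $b=0$ --- is sound. But there is a genuine gap after that. The alignment hyperplanes $\{x:x_j=0\}$ through the origin do not cover $B_t$: any site with all $d-r+1$ alignment coordinates non-zero lies in none of them, and there are plenty of such sites as soon as $t> d-r$. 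To reach them you would have to re-centre the hyperplane argument at sites such as $ie_j$ and iterate over shifted hyperplanes $\{x:x_j=i\}$, as the paper does in proving Theorem~\ref{th:stability}; that requires first showing each re-centring site is protected, that the corresponding shifted spheres are minimal, and that the resulting family of $(d-1,r)$-semi-canonical slices agrees globally on orientation and signs. You label these steps as ``the main obstacle'' and leave them unaddressed, but they are the body of the proof, not a consistency check. It is also worth noting that even when the paper does carry out this iterated-slicing programme (Theorem~\ref{th:stability}) it only concludes that a \emph{smaller} ball $B_{k_1-c_1}$ is $(d,r)$-canonical --- not the exact semi-canonical description of all of $B_t$ with the precise bookkeeping of the $2(d-r+1)$ extreme sites. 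So the reconstruction in your final paragraph would need to do strictly more than Theorem~\ref{th:stability} achieves, and as written it does not.
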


\begin{proof}
Throughout the proof we write $P_k$ for $P(S_k)$ for each $k\in[t]$. Since the origin is protected, it must have at least $2d-r+1$ protected neighbours. Therefore, by the pigeonhole principle, it must have at least $d-r+1$ pairs of opposing protected neighbours. Suppose it has at least $d-r+2$ pairs of opposing protected neighbours. Let $R_2$ be the set of sites in $S_2$ which have degree at least $2$ (and hence exactly $2$) in $P_1$. $R_2$ is precisely the set of all $x+y$ such that $x$ and $y$ belong to $P_1$ and $x+y\neq 0$. Thus,
\begin{equation}\label{eq:r2}
|R_2| \leq \binom{2d-r+1}{2} - (d-r+2).
\end{equation}
It is easy to verify (for example, by using Lemma \ref{le:minibinom} and induction on $d-r$) that
\[
l_{d,r}(2) = \sum_{i_1=0}^2 \sum_{i_2=0}^{i_1} \dots \sum_{i_{d-r+1}=0}^{i_{d-r}} \binom{d}{i_{d-r+1}} = \binom{2d-r+1}{2} + (d-r+1),
\]
which means that the right-hand side of \eqref{eq:r2} is exactly $2(d-r+1)+1$ less than $l_{d,r}(2)$. Therefore,
\begin{equation}\label{eq:r2-2}
\sum_{x\in P_2} d_{P_1}(x) \leq \sum_{x\in R_2} d_{P_1}(x) + 2(d-r+1)+1 = 2|R_2| + 2(d-r+1)+1.
\end{equation}
Every $x\in P_1$ has at least $2d-r+1$ protected neighbours, of which one is the origin and the rest are in $S_2$. The total out degree of $P_1$ is thus
\begin{equation}\label{eq:r2-3}
\sum_{x\in P_1} d_{P_2}(x) = (2d-r)|P_1| = (2d-r)(2d-r+1).
\end{equation}
The combination of \eqref{eq:r2}, \eqref{eq:r2-2} and \eqref{eq:r2-3} is a contradiction. We conclude that the origin has exactly $d-r+1$ pairs of opposing protected neighbours. Without loss of generality, let
\begin{equation}\label{eq:P1}
P_1 = \{e_1,\dots,e_d,-e_1,\dots,-e_{d-r+1}\}.
\end{equation}

This is the base case of our induction. There are two parts to the remainder of the proof. In Part (A), we show that if $P_{k-1}$ is $(d,r)$-canonical then $P_k$ must also be $(d,r)$-semi-canonical. Later, in Part (B), we show that if a set of sites $P_{k-1}'$ in $B_{k-1}$ is semi-canonical but not canonical then we cannot extend $P_{k-1}'$ to the $k$th sphere.

\emph{Part (A).} In this part of the proof we assume that $P_{k-1}$ is $(d,r)$-canonical. For the induction step, the idea is as follows. On the one hand, every site in $P_{k-1}$ must have at least a certain fixed number of edges into $P_k$ to ensure that it is protected, while on the other hand, we are only allowed a certain fixed number of sites in $P_k$ to achieve this, so we need to choose relatively few sites in $S_k$ with relatively large degree into $P_{k-1}$. We shall show that the canonical vertices in $S_k$ (or more precisely, those with degree at least $2$ into $P_{k-1}$) are the only vertices in $S_k$ with enough edges into $P_{k-1}$ to achieve this aim.

Let $R_k$ be the set of sites in $S_k$ with degree at least $2$ in $P_{k-1}$. We know exactly what $P_{k-1}$ is: it is the intersection of $S_{k-1}$ with the $(d,r)$-canonical set having alignment coordinates $1,\dots,d-r+1$ and $\epsilon_{d-r+2}=\dots=\epsilon_d=1$. Therefore, we can also easily see exactly what $R_k$ is: it is the intersection of $S_k$ with the corresponding $(d,r)$-canonical set that has radius $k$, excluding its extreme sites. It follows that $|R_k|$ is exactly $2(d-r+1)$ less than $l_{d,r}(k)$, by Lemma \ref{le:canonsize}. The protected set $P_k$ is minimal and therefore has size $l_{d,r}(k)$ by Theorem \ref{th:min}, so we have
\begin{equation}\label{eq:PkRk}
|P_k|=|R_k|+2(d-r+1).
\end{equation}
Since $R_k$ contains every site in $S_k$ that has degree at least $2$ into $P_{k-1}$, and $R_k$ has size exactly $2(d-r+1)$ less than the extremal number for the sphere $S_k$, we must have
\begin{equation}\label{eq:configscompare}
\sum_{x\in P_k} d_{P_{k-1}}(x) \leq \sum_{x\in R_k} d_{P_{k-1}}(x) + 2(d-r+1),
\end{equation}
and if there is equality in \eqref{eq:configscompare} then it must be the case that $R_k\subset P_k$. Also, we have just observed that $R_k$ consists of all non-extreme canonical sites in $S_k$. Together with the minimality of $P_k$ and Theorem \ref{th:min}, this implies that
\begin{equation}\label{eq:configsextreme}
\sum_{x\in R_k} d_{P_{k-1}}(x) = \sum_{x\in P_{k-1}} d_{P_k}(x) - 2(d-r+1).
\end{equation}
Thus we have equality in \eqref{eq:configscompare}, and as noted above, this means that $R_k\subset P_k$. The only sites in $P_{k-1}$ that do not have enough protected neighbours in $P_k$ so far are the extreme sites, and they all need exactly one more protected neighbour. It follows that $P_k$ must be $(d,r)$-semi-canonical. This completes Part (A) of the proof.

\emph{Part (B).} In this part of the proof we let $P_{k-1}$ and $R_k$ be as before (that is, a canonical set in $B_{k-1}$ and the set of sites in $S_k$ that have degree at least $2$ into $P_{k-1}$, respectively), and we let $P_{k-1}'$ be any semi-canonical (but not canonical) set obtained from $P_{k-1}$ by changing one or more extreme sites. Thus, $P_{k-1}'$ has $m\geq 1$ extreme sites $x_1,\dots,x_m$ not of the form $\pm (k-1)e_i$ for any $i$, and the remaining extreme sites $y_1,\dots,y_{2(d-r+1)-m}$ are of that form. Our aim will be to show that if the set of protected sites in $B_{k-1}$ is precisely $P_{k-1}'$, and $P(B_k)$ is minimal, then there is no way to extend $P_{k-1}'$ into the $k$th sphere, which would be a contradiction. Our method for proving this is to show that each $x_i$ requires at least one more protected neighbour in $S_k$ than the site it replaced in $P_{k-1}$, and by comparison with the inequalities derived above for $P_{k-1}$, we deduce that there would need to be more than the minimum number of protected sites in $S_k$ to extend $P_{k-1}'$ into the $k$th sphere, a contradiction.

Suppose on the contrary that there is a minimal set $P_k'$ in $S_k$ extending $P_{k-1}'$. Let $R_k'$ be the set of sites in $S_k$ with degree at least $2$ in $P_{k-1}'$. Observe that $R_k\setminus R_k'$ consists precisely of the sites in $R_k$ that neighbour one of the extreme sites in $P_{k-1}\setminus P_{k-1}'$. There are $m$ extreme sites in $P_{k-1}\setminus P_{k-1}'$ and each has exactly $2d-r-1$ neighbours in $R_k$: two for each of the $d-r+1$ alignment coordinates, except that of the extreme site itself, and one for each of the $r-1$ orientation coordinates. Therefore we have $|R_k\setminus R_k'| = m(2d-r-1)$. On the other hand, $R_k'\setminus R_k$ consists precisely of the sites in $R_k'$ that neighbour one of the extreme sites in $P_{k-1}'\setminus P_{k-1}$. Again, there are $m$ extreme sites in $P_{k-1}'\setminus P_{k-1}$, but here each only has $2d-r-2$ neighbours in $R_k'$: if the site is $(k-2)e_i-e_j$, say, then it has two neighbours in $R_k'$ for each alignment coordinate except $i$, and one for each orientation coordinate except $j$. It follows that $|R_k'\setminus R_k|=m(2d-r-2)$, and therefore
\[
|R_k'| = |R_k|-m.
\]
From \eqref{eq:PkRk} we know that $|P_k| = |R_k| + 2(d-r+1)$, and so
\[
|P_k'| = |P_k| = |R_k| + 2(d-r+1) = |R_k'| + 2(d-r+1) + m.
\]
Therefore, as in \eqref{eq:configscompare},
\begin{equation}\label{eq:semicompare}
\sum_{x\in P_k'} d_{P_{k-1}'}(x) \leq \sum_{x\in R_k'} d_{P_{k-1}'}(x) + 2(d-r+1) + m.
\end{equation}

Next, observe that every site in $R_k\setminus R_k'$ has degree exactly $2$ into $P_{k-1}$ and that every site in $R_k'\setminus R_k$ has degree exactly $2$ into $P_{k-1}'$. From this it follows that
\begin{equation}\label{eq:semitransfer1}
\sum_{x\in R_k'} d_{P_{k-1}'}(x) = \sum_{x\in R_k} d_{P_{k-1}}(x) - 2m.
\end{equation}

The final ingredient we need to obtain a contradiction is the observation that
\begin{equation}\label{eq:semitransfer2}
\sum_{x\in P_{k-1}} d_{P_k}(x) \leq \sum_{x\in P_{k-1}'} d_{P_k'}(x).
\end{equation}
This inequality holds because every site (necessarily extreme) in $P_{k-1}\setminus P_{k-1}'$ has degree exactly $2d-r$ into $P_k$, and every site in $P_{k-1}'\setminus P_{k-1}$ (also necessarily extreme) has degree exactly $1$ into $P_{k-2}$, and hence degree at least $2d-r$ into $P_k'$.

The combination of \eqref{eq:semicompare}, \eqref{eq:semitransfer1}, \eqref{eq:configscompare} and \eqref{eq:semitransfer2} gives
\[
\sum_{x\in P_k'} d_{P_{k-1}'}(x) \leq \sum_{x\in P_{k-1}'} d_{P_k'}(x) - m,
\]
a contradiction if $m\geq 1$.
\end{proof}

\section{Near-minimal configurations}\label{se:stab}

We turn to the third of the three extremal theorems, which gives a rough description of near-minimal protecting sets. The theorem is as follows.

\begin{theorem}\label{th:stability}
Let $2\leq r\leq d$. There exist $c_1$ and $c_2$ depending only on $d$ such that the following holds. Suppose there exists $k_1\geq c_1$ such that $P(S_k)$ is minimal for all $k$ in the range $k_1\leq k\leq k_1+c_2$. Let $t\geq k_1+c_2$ and suppose that the origin is protected. Then $P(B_{k_1-c_1})$ is $(d,r)$-canonical.
\end{theorem}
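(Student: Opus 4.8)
The plan is to prove Theorem~\ref{th:stability} by a downward induction on the radius, using the hyperplane restriction principle as the engine that lets one pass to lower dimensions. The rough shape is: starting from the fact that $P(S_k)$ is minimal (and hence, by Theorem~\ref{th:extremal}, $(d,r)$-semi-canonical) for a whole block of consecutive values $k_1\le k\le k_1+c_2$, first deduce that on at least one of these spheres the protecting set is actually \emph{canonical}, not merely semi-canonical; the constant $c_2$ should be chosen large enough (depending only on $d$) that among $c_2+1$ consecutive semi-canonical shells the extreme sites cannot keep ``drifting'', so the orientation $I$ and the signs $\epsilon_i$ stabilise and at least one shell is forced to be genuinely canonical. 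Once we have a canonical $P(S_{k_0})$ for some $k_0$ in the block, the task is to propagate canonicity \emph{inwards}, down to radius $k_1-c_1$, shedding a bounded amount of radius ($c_1$) to absorb boundary effects.

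\textbf{The inward propagation.} For the inward step, suppose $P(B_k)$ is known to be minimal and that $P(S_{k+1})$ (or a suitable range above $k$) is canonical; we want to conclude $P(B_{k-1})$ is canonical, or at least that the sites in $B_{k-1}$ are forced into canonical position. Here is where the hyperplane restriction principle enters. The origin is protected, so it has at least $2d-r+1$ protected neighbours, hence at least $d-r+1$ opposing pairs; write (after a rotation/reflection) $\pm e_1,\dots,\pm e_{d-r+1}$ and $e_{d-r+2},\dots,e_d$ among them. For each alignment coordinate $i\le d-r+1$, the sites $-e_i$ and $e_i$ are both protected, so by the hyperplane restriction principle the slice $B_{k}\cap\{x_i=0\}$ contains at most the minimal number of protected sites for the $(d-1)$-dimensional $(d-1,r-1)$ problem — and combined with the lower bound of Theorem~\ref{th:min} applied inside that hyperplane, it must be \emph{exactly} minimal. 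By the induction hypothesis of Theorem~\ref{th:stability} (now in dimension $d-1$, threshold $r-1$), the protected sites in that slice, out to radius $k-c_1$, form a $(d-1,r-1)$-canonical set. Doing this for every alignment coordinate, and for the orientation coordinates one argues slightly differently (using that $e_j$ alone is protected and tracking degrees into the already-canonical outer shell), we learn the restriction of $P(B_{k-c_1})$ to every coordinate hyperplane through the origin is canonical with compatible orientation and signs; gluing these slices together — the overlaps being lower-dimensional canonical sets that pin down the global orientation $I$ and signs $\epsilon_i$ — forces $P(B_{k-c_1})$ itself to be $(d,r)$-canonical.

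\textbf{Base cases and bookkeeping.} The base case $r=2$, $d=2$ (or more generally the one-dimensional geometry $r=d$) is essentially the content of the corresponding stability lemma of \cite{BHSU}, which we may invoke; the induction on $d$ (with $r$ decreasing alongside, so that $d-r$ is what really drives the recursion, matching the $(d-r+1)$-dimensional geometry) needs $c_1,c_2$ to grow with $d$ but only boundedly, so one sets them by the recursion $c_1(d)=c_1(d-1)+O(1)$, $c_2(d)=c_2(d-1)+O(1)$ with explicit $O(1)$ read off from the degree-counting slack (the $2(d-r+1)$ term that appears throughout Section~\ref{se:min}). Throughout one keeps track, as in the proof of Theorem~\ref{th:extremal}, of the identity $\sum_{x\in P_k}d_{P_{k-1}}(x)=\sum_{x\in P_{k-1}}d_{P_k}(x)$ and the fact that any non-canonical placement costs at least one extra incidence, so that the minimality hypothesis on the block of spheres is violated unless everything lines up canonically.

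\textbf{Main obstacle.} The hard part will be the gluing step: knowing that every coordinate-hyperplane slice of $P(B_{k-c_1})$ is canonical does not immediately give that the whole set is canonical, because a priori the slices could be canonical with respect to incompatible orientations or signs, or the ``off-axis'' sites (those with two or more nonzero coordinates) could fail to be determined by their projections. Resolving this requires showing that the pairwise overlaps of the slices — which are themselves lower-dimensional canonical sets by induction — rigidly propagate a single choice of orientation $I$ and sign vector $(\epsilon_i)$ across all of them, and then a separate argument (again a local degree count against the outer canonical shell $P(S_{k-c_1+1})$, using that each off-axis protected site must supply the required number of protected neighbours to the sites just outside it) that any site in $B_{k-c_1}$ consistent with all slices but not globally canonical would force an extra protected site somewhere in the block, contradicting minimality. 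Managing the constant loss $c_1$ so that it stays bounded while absorbing all of these boundary mismatches is the delicate quantitative point.
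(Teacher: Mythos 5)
Your proposal correctly identifies the hyperplane restriction principle as the engine and the induction on $d-r$ (you call it induction on $d$ with $r$ tracking alongside) as the framework — both match the paper's strategy. However, there are several significant errors and gaps.

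\textbf{The slice has the wrong parameters.} For an alignment coordinate $i$ with $\pm e_i$ both protected, the slice $B_k\cap\{x_i=0\}$ should carry the $(d-1,r)$-model, not the $(d-1,r-1)$-model. Removing one alignment coordinate drops the dimension by one but leaves the $r-1$ orientation coordinates untouched, so the threshold stays $r$; equivalently, the hyperplane restriction bound $\sum_{i_2=0}^k\cdots\sum_{i_{d-r+1}=0}^{i_{d-r}}\binom{d-1}{i_{d-r+1}}$ has $d-r$ summation indices, matching $(d-1)-r+1=d-r$, i.e.\ parameters $(d-1,r)$. With your $(d-1,r-1)$, the quantity $d-r$ would not decrease, so the induction would not terminate; this is not a minor slip but derails the whole recursion.

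\textbf{The opening step is unjustified and unnecessary.} You want to first find a shell on which the protecting set is ``canonical rather than semi-canonical,'' by arguing that the extreme sites cannot drift across $c_2+1$ consecutive minimal shells. But ``canonical'' and ``semi-canonical'' are properties of $P(B_k)$, not of $P(S_k)$, and to invoke Theorem~\ref{th:extremal} you would need $P(B_k)$ to be minimal — i.e.\ minimality of \emph{every} shell up to $k$ — whereas the hypothesis only gives minimality on a bounded window $[k_1,k_1+c_2]$, telling you nothing about the inner shells. The paper never needs this step: from the protection of the origin and Lemma~\ref{le:key}/the pigeonhole principle one immediately gets $d-r+1$ opposing pairs of protected neighbours, and the argument proceeds from there.

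\textbf{The gluing strategy does not cover the ball.} You slice only by the coordinate hyperplanes through the origin, $\{x_j=0\}$; their union misses every site with all coordinates nonzero, which is almost all of $B_{k_1-c_1}$. You acknowledge this (``off-axis sites'') and propose a degree-count fix, but leave it unspecified. The paper's key structural idea — absent from your proposal — is to slice once by $U_0=\{x_1=0\}$ to learn that every site $ie_2$, $|i|\le k_1-c_1$, is protected, and then apply the hyperplane restriction principle \emph{recursively at each of these translated centres} to obtain minimality of $P(S_k\cap V_i)$ for every slab $V_i=\{x_2=i\}$. Those slabs tile the ball, and the induction hypothesis gives a $(d-1,r)$-canonical set in each; the remaining work (Claims (A) and (B) of the paper) is matching alignments (via a counting argument in the hyperplanes $\{x_j=0\}$, $j\le d-r+1$) and matching orientations (by re-running the construction with $e_1$ and $e_2$ interchanged and comparing). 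Your plan to argue ``slightly differently'' for orientation coordinates does not appear to recover this; the paper never slices by orientation coordinates at all.

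In short: right tool, right induction variable, but wrong reduced parameters, a spurious first step, and a gluing scheme that does not determine the ball. The missing ingredient is the recursive/translated use of the hyperplane restriction principle along a protected axis.
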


The theorem implies the following statement. If the origin is protected and the protected sites in at least a fixed constant number of spheres are minimal, then the protected sites in all but a final (different) fixed constant number of layers are a canonical set. The corollary of this that we need in the proof of Theorem \ref{th:main} is that if the origin is protected and there are $m(t)+a$ protected sites in $B_t$, then the number of possible configurations of the protected sites is $t^{O(a)}$. The trivial bound would be $t^{O(ta)}$.

Throughout the proof we make frequent use of variations on the following idea, which we refer to as the \emph{hyperplane restriction principle}. We start by assuming that $S_k$ is minimal. Suppose we know that both $-e_i$ and $e_i$ are protected. Let $C_1^+$ be the compatibility function given by $C_1^+(1)=1$ and $C_1^+(i)=\ast$ for $i\neq 1$, and let $C_1^-$ be the compatibility function given by $C_1^-(1)=-1$ and $C_1^-(i)=\ast$ for $i\neq 1$. By Lemma \ref{le:key},
\[
|P_{k-1}^{C_1^+}(e_1)|  \geq \sum_{i_1=0}^{k-1} \sum_{i_2=0}^{i_1} \cdots \sum_{i_{d-r+1}=0}^{i_{d-r}} \binom{d-1}{i_{d-r+1}},
\]
and the same inequality holds for $|P_{k-1}^{C_1^-}(-e_1)|$. Since $P(S_k)$ is minimal, there are exactly
\[
\sum_{i_1=0}^k \sum_{i_2=0}^{i_1} \cdots \sum_{i_{d-r+1}=0}^{i_{d-r}} \binom{d}{i_{d-r+1}}
\]
protected sites at distance $k$ from the origin. Let $C_1^0$ be the $1$-restriction of $C_1^+$ (equivalently, of $C_1^-$). Noting that the sets $P_{k-1}^{C_1^+}(e_1)$, $P_{k-1}^{C_1^-}(-e_1)$ and $P_k^{C_1^0}(0)$ partition the set of protected sites in $S_k$, we have from the above inequalities and Lemma \ref{le:minibinom} that
\begin{equation}\label{eq:stabobs}
|P_k^{C_1^0}(0)| \leq \sum_{i_2=0}^k \cdots \sum_{i_{d-r+1}=0}^{i_{d-r}} \binom{d-1}{i_{d-r+1}}.
\end{equation}
In other words, by assuming that $-e_1$ and $e_1$ are protected, we obtain an upper bound on the number of protected sites $x$ with $x_1=0$ that is equal to the minimum it could be. Why is this a useful observation? It is the key to applying induction: it gives minimality of $S_k$ in a codimension $1$ subspace of $\Z^d$, so we may deduce that the set of protected sites inside that subspace must be $(d-1,r)$-canonical. The principle is illustrated in Figure \ref{fi:hrp}.

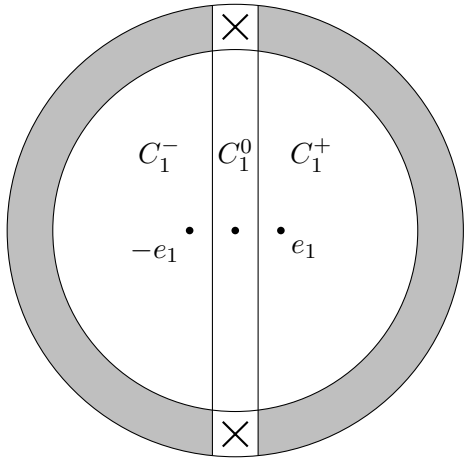
\begin{figure}[ht]
  \centering
  \begin{tikzpicture}[>=latex]
    \path [name path=C1,fill=gray!50] (0,0) circle (3);
    \path [name path=C2,fill=white] (0,0) circle (2.4);
    \path [name path=L1] (-0.3,0) -- (-0.3,3);
    \path [name path=L2] (-0.3,0) -- (-0.3,-3);
    \path [name path=L3] (0.3,0) -- (0.3,3);
    \path [name path=L4] (0.3,0) -- (0.3,-3);
    \path [name intersections={of=C1 and L1,by=P1}];
    \path [name intersections={of=C1 and L2,by=P2}];
    \path [name intersections={of=C1 and L3,by=P3}];
    \path [name intersections={of=C1 and L4,by=P4}];
    \fill [white] (-0.3,-3) rectangle (0.3,3);
    \draw (0,0) circle (3) (0,0) circle (2.4);
    \draw (P1) -- (P2) (P3) -- (P4);
    \node [circle,fill,inner sep=0,minimum size=0.1cm] at (0,0) {};
    \draw (-0.6,0) node [circle,fill,inner sep=0,minimum size=0.1cm] {} node [below left] {$-e_1$};
    \draw (0.6,0) node [circle,fill,inner sep=0,minimum size=0.1cm] {} node [below right] {$e_1$};
    \node at (-1,1) {$C_1^-$};
    \node at (0,1) {$C_1^0$};
    \node at (1,1) {$C_1^+$};
    \node [cross out,draw,thick,inner sep=0,minimum size=0.3cm] at (0,2.7) {};
    \node [cross out,draw,thick,inner sep=0,minimum size=0.3cm] at (0,-2.7) {};
  \end{tikzpicture}
  \caption{This figure depicts the hyperplane restriction principle. Assuming that $-e_1$ and $e_1$ are protected, Lemma \ref{le:key} implies that there are many protected sites in the two grey arcs, which represent the intersection of $S_k$ with the sites that are $C_1^-$-compatible with $-e_1$ (on the left) and $C_1^+$-compatible with $e_1$ (on the right). Assuming that $P(S_k)$ is minimal and that the origin is also protected, Lemma \ref{le:minibinom} then implies that the intersection of $S_k$ with the hyperplane $\{x:x_1=0\}$, shown here as the two crosses, is minimal.}
  \label{fi:hrp}
\end{figure}

The hyperplane restriction principle we have just described can be applied much more widely than as here to a hyperplane passing through the origin. We shall see in the proof of Theorem \ref{th:stability} that the calculation works when an arbitrary site plays the role played here by the origin.

We now sketch the proof of Theorem \ref{th:stability}, which is by induction on $d-r$. First, observe that the origin has at least $d-r+1$ pairs of opposing protected neighbours, which we may assume are $\pm e_1,\dots,\pm e_{d-r+1}$. Define for each $i$ the hyperplanes
\begin{equation}\label{eq:UiVi}
U_i = \{x\in\Z^d:x_1=i\} \qquad \text{and} \qquad V_i = \{x\in\Z^d:x_2=i\}.
\end{equation}
The hyperplane restriction principle shows that $U_i\cap S_k$ is minimal (under the $r$-neighbour model in $d-1$ dimensions, of course) for a large range of $k$, and the induction hypothesis then provides us with a $(d-1,r)$-canonical set of protected sites in $U_0$, of a suitable radius. The only corollary of this that we use is that $ie_2$ is protected for a large range of $i$. The same idea provides a $(d-1,r)$-canonical protected set in $V_0$, but now, because we know the sites $ie_2$ are protected, we can continue to apply the hyperplan restriction principle to obtain $(d-1,r)$-canonical protected sets $P_i$ of appropriate radii in each $V_i$.

Our remaining task is two-fold. First, we must show that the $P_i$ have the same set of alignment coordinates, namely $[d-r+1]$, and second we must show that they have the same orientation. For the first, we again use the hyperplane restriction principle. Since $\pm e_j$ are both protected for each $j\leq d-r+1$, we obtain an upper bound on the size of the set of protected sites in the hyperplane $\{x:x_j=0\}$ for each $i$ in the same range. If for one of the $P_i$ there is a coordinate $j\leq d-r+1$ that is not an alignment coordinate then we find too many protected sites in the hyperplane $\{x:x_j=0\}$. For the second claim, observe that we could have interchanged the roles of $e_1$ and $e_2$ in everything we have done so far in the proof, and obtain $(d-1,r)$-canonical protected sets in the hyperplanes $U_i$. It is then a simple counting argument to show that the union of the $U_i$ must equal the union of the $V_i$, and that this implies at the orientations match.

\begin{proof}[Proof of Theorem \ref{th:stability}]
The proof is by induction on $d-r$. The base case $d-r=0$ is Theorem $11$ in \cite{BHSU}, for which the proof gives constants $c_1=d$ and $c_2=3d+1$. Here we assume that $d-r\geq 1$ and we take as our hypothesis that the result holds for smaller values of $d-r$ with the same constants $c_1$ and $c_2$.

In what follows, $k$ will always be assumed to be in the range $k_1\leq k \leq k_1+c_2$. For each $i$, let $U_i$ be the hyperplane as in \eqref{eq:UiVi}. The origin can have at most $r-1$ neighbours that are not protected, so it must have at least $2d-r+1\geq d+2$ neighbours that are protected. Therefore we may assume that $\pm e_1,\dots,\pm e_{d-r+1}$ are all protected.

The hyperplane restriction principle, with precisely the example given in the preamble to the proof, implies that the set of protected sites in the $(d-2)$-dimensional sphere $S_k\cap U_0$ embedded in the $(d-1)$-dimensional space $U_0$ is minimal. This holds for each $k$ for which $S_k$ is minimal, so it certainly is true for all $k$ satisfying $k_1\leq k\leq k_1+c_2$. By induction, it follows that $P(B_{k_1-c_1}\cap U_0)$ is a $(d-1,r)$-canonical set. In particular this means that $ie_2$ is protected for $i=-k_2,\dots,k_2$, where $k_2=k_1-c_1$.

Next we show inductively that the intersections of $P(S_k)$ with the hyperplanes $V_i$ (defined in \eqref{eq:UiVi}) are minimal for $i=-k_2,\dots,k_2$. First, the set $P(S_k\cap V_0)$ is minimal by the same argument we used to prove $P(S_k\cap U_0)$ is minimal. So by symmetry, we just have to show that $P(S_k\cap V_i)$ is minimal for $i=1,\dots,k_2$.

We use the hyperplane restriction principle again, this time recursively. Let $C_2^+$ be the compatibility function given by $C_2^+(2)=1$ and $C_2^+(j)=\ast$ for $j\neq 2$, let $C_2^-$ be the compatibility function given by $C_2^-(2)=-1$ and $C_2^-(j)=\ast$ for $j\neq 2$, and let $C_2^0$ be the $2$-restriction of $C_2^+$. For the induction to go through, we make a stronger claim: that $P(S_k\cap V_i)$ is minimal for smaller values of $i$ and also that
\begin{equation}\label{eq:configs1}
|P_{k-i}^{C_2^+}(ie_2)| = \sum_{i_1=0}^{k-i} \sum_{i_2=0}^{i_1} \cdots \sum_{i_{d-r+1}=0}^{i_{d-r}} \binom{d-1}{i_{d-r+1}}
\end{equation}
holds for smaller values of $i$. This is the case when $i=0$ because
\[
|P_k^{C_2^-}(0)|  \geq \sum_{i_1=0}^k \sum_{i_2=0}^{i_1} \cdots \sum_{i_{d-r+1}=0}^{i_{d-r}} \binom{d-1}{i_{d-r+1}}
\]
by Lemma \ref{le:key}, so by minimality of $P(S_k)$ we must have equality here and in \eqref{eq:configs1} in the case $i=0$. Suppose the claim holds for $i-1$. Then
\begin{align}
|P_{k-i}^{C_2^0}(ie_2)| &= |P_{k-i}^{C_2^+}(ie_2)| - |P_{k-i-1}^{C_2^+}((i+1)e_2)| \notag \\
&\leq \sum_{i_1=0}^{k-i} \sum_{i_2=0}^{i_1} \cdots \sum_{i_{d-r+1}=0}^{i_{d-r}} \binom{d-1}{i_{d-r+1}} - \sum_{i_1=0}^{k-i-1} \sum_{i_2=0}^{i_1} \cdots \sum_{i_{d-r+1}=0}^{i_{d-r}} \binom{d-1}{i_{d-r+1}} \label{eq:configs2} \\
&= \sum_{i_2=0}^{k-i} \sum_{i_3=0}^{1_2} \cdots \sum_{i_{d-r+1}=0}^{i_{d-r}} \binom{d-1}{i_{d-r+1}}. \notag
\end{align}
So $P(S_k\cap V_i)$ is minimal. Furthermore, we must have equality in \eqref{eq:configs2}, so \eqref{eq:configs1} holds for $i+1$. This completes the proof of the claim that $P(S_k\cap V_i)$ is minimal for $i=-k_2,\dots,k_2$.

By induction, $P(B_{k_2}\cap V_i)$ is $(d-1,r)$-canonical for $i=-k_2,\dots,k_2$. To save space, we shall write $P_i$ for $P(B_{k_2}\cap V_i)$. To complete the proof of the theorem we have to show two things: Claim (A), that each $P_i$ has the same alignment, and Claim (B), that each $P_i$ has the same orientation. These claims are sufficient to prove the theorem because together they imply that $P_{-k_2}\cup\dots\cup P_{k_2}$ is a $(d,r)$-canonical set of protected sites, as desired.

\begin{figure}[ht]
  \centering
  \begin{tikzpicture}[scale=0.5,>=latex]
    \fill[gray!50] (-5,5) rectangle (6,6);
    \draw (-5,5) rectangle (-4,6) (-4,4) rectangle (-3,7) (-3,3) rectangle (-2,8) (-2,2) rectangle (-1,9) (-1,1) rectangle (0,10) (0,0) rectangle (1,11) (1,1) rectangle (2,10) (2,2) rectangle (3,9) (3,3) rectangle (4,8) (4,4) rectangle (5,7) (5,5) rectangle (6,6);
    \draw (-5,5) rectangle (6,6);
    \node [cross out,draw,thick,inner sep=0,minimum size=0.1cm] at (0.5,5.5) {};
  \end{tikzpicture}
  \caption{The filled left-right strip in the figure depicts the set of protected sites $\{ie_2:|i|\leq k_2\}$. The vertical strips depict the intersections of the hyperplanes $V_i$ with the ball $B_{k_2}$. Induction shows that the set of protected sites $P_i$ inside the $i$th strip is $(d-1,r)$-canonical. Since $\pm e_1,\dots, \pm e_{d-r+1}$ are all protected, multiple applications of the hyperplane restriction principle imply that none of the hyperplanes $\{x\in\Z^d:x_j=0\}$ can contain too many protected sites, for $j\leq d-r+1$. On the other hand, if one of the $P_i$ has alignment coordinates other than $[d-r+1]$ then that forces too many protected sites in one of the hyperplanes.}
  \label{fi:stabalign}
\end{figure}
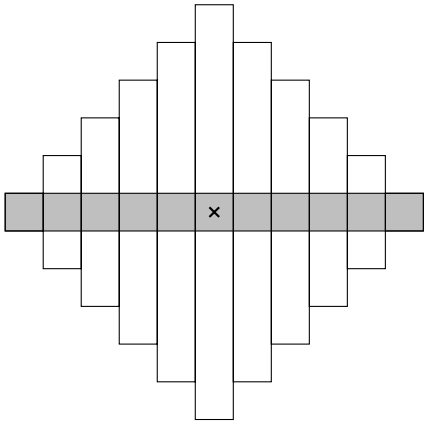

\emph{Claim (A).} We start with the claim that each $P_i$ has the same alignment. We shall show that if the set of alignment coordinates of one of the $P_i$ is not $[d-r+1]$ then there are too many protected sites in $S_k$; see Figure \ref{fi:stabalign}. Suppose there is a choice of $j\in[d-r+1]$ and $i\in\{-k+1,\dots,k-1\}$ such that $j$ is not a direction of alignment for $P_i$. We may assume that $d-r+2$ is a direction of alignment for this set instead. Again we use a variation on the hyperplane restriction principle. Since both $e_j$ and $-e_j$ are protected, Lemma \ref{le:key} tells us that there are at least
\[
2\sum_{i_1=0}^{k-1} \sum_{i_2=0}^{i_1} \cdots \sum_{i_{d-r+1}=0}^{i_{d-r}} \binom{d-1}{i_{d-r+1}}
\]
protected sites $x$ in $S_k$ with $x_j\neq 0$. By minimality there are exactly
\[
\sum_{i_1=0}^k \sum_{i_2=0}^{i_1} \cdots \sum_{i_{d-r+1}=0}^{i_{d-r}} \binom{d}{i_{d-r+1}}
\]
protected sites in $S_k$ in total, so by Lemma \ref{le:minibinom} there are at most
\begin{equation}\label{eq:stabxj=0}
\sum_{i_2=0}^k \sum_{i_3=0}^{i_2} \cdots \sum_{i_{d-r+1}=0}^{i_{d-r}} \binom{d-1}{i_{d-r+1}}
\end{equation}
protected sites $x$ in $S_k$ with $x_j=0$. (In fact we have equality, but we do not need that.) For $-k\leq l\leq k$, let
\[
Q_l = \{x\in P_l : x_j=0\}.
\]
If $j$ is an alignment coordinate for $P_l$ then $Q_l$ is $(d-2,r)$-canonical, while if $j$ is an orientation coordinate for $P_l$ then $Q_l$ is $(d-2,r-1)$-canonical. In either case,
\[
|Q_l\cap S_k| \geq \sum_{i_3=0}^{k-l} \sum_{i_4=0}^{i_3} \cdots \sum_{i_{d-r+1}=0}^{i_{d-r}} \binom{d-2}{i_{d-r+1}}.
\]
Also, $Q_i\cap S_k$ contains at least one more site than this, because by assumption $j$ is an orientation coordinate for $P_i$, so $Q_i$ is $(d-2,r-1)$-canonical and therefore
\[
|Q_i\cap S_k| \geq \sum_{i_2=0}^{k-i} \sum_{i_3=0}^{i_2} \cdots \sum_{i_{d-r+1}=0}^{i_{d-r}} \binom{d-2}{i_{d-r+1}} \geq \sum_{i_3=0}^{k-i} \sum_{i_4=0}^{i_3} \cdots \sum_{i_{d-r+1}=0}^{i_{d-r}} \binom{d-2}{i_{d-r+1}} + 1.
\]
Hence, by Lemma \ref{le:minibinom},
\[
\sum_{l=-k}^k |Q_l\cap S_k| \geq \sum_{i_2=0}^k \sum_{i_3=0}^{i_2} \cdots \sum_{i_{d-r+1}=0}^{i_{d-r}} \binom{d-1}{i_{d-r+1}} +1,
\]
contradicting \eqref{eq:stabxj=0}. This completes the proof of Claim (A).

\emph{Claim (B).} Next we prove the second claim, that each of the $(d-1,r)$-canonical sets has the same orientation. Note that we could have reversed the roles of $e_1$ and $e_2$ in everything we have done so far and obtained $(d-1,r)$-canonical sets $R_{-k},\dots,R_k$ in $P(B_k)\cap U_{-k},\dots,P(B_k)\cap U_k$ respectively. These canonical sets have the same alignment as the canonical sets for the $U_k$, by the same reasoning as above. The set $(P_{-k}\cup\dots\cup P_k) \cap B_k$ is equal to $P(B_k)$, the set of protected sites in $B_k$, and so is the set $(R_{-k}\cup\dots\cup R_k) \cap B_k$. Thus
\[
(P_{-k}\cup\dots\cup P_k) \cap B_k = (R_{-k}\cup\dots\cup R_k) \cap B_k.
\]
It now follows immediately that the orientations of the $P_i$ must match up: if some two are different then we are left with no choice of orientations for $R_0$. This completes the proof of the second claim, and of the theorem.
\end{proof}

\section{Proofs of main theorems}\label{se:main}

In the previous two sections we proved the three key extremal theorems. Here we use those theorems to derive good approximations to the first and second moments of the number of uninfected sites at time $t$. This will turn out to be key to proving Theorems \ref{th:main} and \ref{th:conc}.

Let us fix a sequence of probabilities $(p_n)_{n=1}^\infty$, let $E_{d,r}(t,n,x)$ be the event that the site $x\in\T_n^d$ is uninfected at time $t$, and let $F_{d,r}(t,n,x)$ be the corresponding indicator random variable. We are interested in the total number of uninfected sites at time $t$, defined to be $F_{d,r}(t,n)$; thus,
\[
F_{d,r}(t,n) = \sum_{x\in\T_n^d} F_{d,r}(t,n,x).
\]
Often we write $F(t,n)$ for $F_{d,r}(t,n)$. We would like to estimate the first two moments of $F(t,n)$. Together with the Stein-Chen method \cite{Stein,Chen}, this will allow us to prove that $F(t,n)$ is asymptotically Poisson distributed, which will enable us to complete the proof of Theorems \ref{th:main} and \ref{th:conc}. The version of Stein-Chen that we shall use is the following formulation due to Barbour and Eagleson \cite{BarEag}.

\begin{theorem}\label{th:steinchen}
Let $X_1,\dots,X_n$ be Bernoulli random variables with $\P (X_i = 1) = p_i.$ Let $Y_n = \sum_{i=1}^{n}{X_i}$, and let $\lambda_n = \mathbb{E}(Y_n) = \sum_{i=1}^n p_i.$ For each $i\in [n],$ let $N_i \subset [n]$ be such that $X_i$ is independent of $\{X_j:j\notin N_i\}$. For each $i,j \in [n],$ let $p_{ij} = \mathbb{P}(X_i X_j = 1)$. Let $Z_n \sim \Po(\lambda_n)$. Then 
\[
\sup_{A\subset\Z} \big| \P(Y_n\in A) - \P(Z_n\in A) \big| \leq \min\left\{1,\lambda_n^{-1}\right\} \Bigg( \sum_{i=1}^n \sum_{j\in N_i} p_i p_j + \sum_{i=1}^n \sum_{j\in N_i\setminus\{i\}} p_{ij} \Bigg).
\]
\end{theorem}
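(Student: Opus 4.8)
The plan is to prove this as a standard instance of the Stein--Chen method, so the argument should be viewed as a streamlined account of the Barbour--Eagleson formulation \cite{BarEag} (building on \cite{Chen}). First I would introduce the Poisson Stein operator $(\mathcal{A}g)(k) = \lambda_n g(k+1) - k g(k)$ for bounded $g\colon\Z_{\geq 0}\to\R$, and recall that $Z\sim\Po(\lambda_n)$ is characterised by $\E[(\mathcal{A}g)(Z)]=0$ for all such $g$. For a fixed $A\subset\Z$ let $g_A$ be the solution of the Stein equation
\[
\lambda_n g_A(k+1) - k g_A(k) = \mathbf{1}_{\{k\in A\}} - \P(Z_n\in A), \qquad k\geq 0,
\]
with $g_A(0)=0$; this first-order recursion has an explicit solution in terms of partial sums of Poisson point masses. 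Evaluating at $k=Y_n$ and taking expectations yields
\[
\P(Y_n\in A) - \P(Z_n\in A) = \E\big[\lambda_n g_A(Y_n+1) - Y_n g_A(Y_n)\big],
\]
so everything reduces to bounding the right-hand side uniformly in $A$.

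The one genuinely analytic ingredient, which I expect to be the main obstacle, is the bound $\Delta_A := \sup_{k\geq 0}\big|g_A(k+1)-g_A(k)\big| \leq \min\{1,\lambda_n^{-1}\}$ on the first difference of the Stein solution. This is the ``magic factor'' of Barbour and Eagleson; it follows from the explicit formula for $g_A$ together with monotonicity/unimodality estimates for the Poisson weights, and is precisely where the factor $\min\{1,\lambda_n^{-1}\}$ in the statement originates. One may either quote it directly or reprove it by a short direct computation with Poisson tail sums.

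Granting $\Delta_A\leq\min\{1,\lambda_n^{-1}\}$, I would finish by exploiting the local dependence structure. Writing $Y_n = X_i + W_i + V_i$ with $W_i = \sum_{j\in N_i\setminus\{i\}} X_j$ and $V_i = \sum_{j\notin N_i} X_j$ independent of $X_i$, and using $X_i g_A(Y_n) = X_i g_A(1+W_i+V_i)$, one gets for each $i$, after adding and subtracting the term $\E[X_i g_A(1+V_i)] = p_i\E[g_A(1+V_i)]$ (valid by independence of $X_i$ and $V_i$),
\[
p_i\E[g_A(Y_n+1)] - \E[X_i g_A(Y_n)] = p_i\E\big[g_A(1+X_i+W_i+V_i) - g_A(1+V_i)\big] - \E\big[X_i\big(g_A(1+W_i+V_i) - g_A(1+V_i)\big)\big].
\]
Bounding the first bracket by $\Delta_A(X_i+W_i)$ and the second by $\Delta_A W_i$, then taking expectations, gives a contribution of absolute value at most $\Delta_A\big(\sum_{j\in N_i} p_i p_j + \sum_{j\in N_i\setminus\{i\}} p_{ij}\big)$, since $p_i\E[X_i+W_i] = \sum_{j\in N_i} p_i p_j$ and $\E[X_i W_i] = \sum_{j\in N_i\setminus\{i\}} p_{ij}$. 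Summing the identity $\E[\lambda_n g_A(Y_n+1) - Y_n g_A(Y_n)] = \sum_{i=1}^n\big(p_i\E[g_A(Y_n+1)] - \E[X_i g_A(Y_n)]\big)$ over $i$, inserting $\Delta_A\leq\min\{1,\lambda_n^{-1}\}$, and taking the supremum over $A\subset\Z$ yields the claimed inequality.
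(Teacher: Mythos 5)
The paper does not prove this theorem: it is quoted verbatim as a known result, attributed to Barbour and Eagleson \cite{BarEag}, and used as a black box in Section \ref{se:main}. Your sketch is a correct reconstruction of the standard Stein--Chen local-dependence argument from that reference: the Poisson Stein equation, the decomposition $Y_n = X_i + W_i + V_i$ with $V_i$ independent of $X_i$, the add-and-subtract of $p_i\E[g_A(1+V_i)]$, and the telescoping bounds $|g_A(1+X_i+W_i+V_i)-g_A(1+V_i)|\leq\Delta_A(X_i+W_i)$ and $|g_A(1+W_i+V_i)-g_A(1+V_i)|\leq\Delta_A W_i$ combine exactly as you describe to give the stated error term. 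The one ingredient you quote rather than prove is the magic-factor bound $\sup_{k\geq 0}|g_A(k+1)-g_A(k)|\leq\min\{1,\lambda_n^{-1}\}$; this is legitimate (indeed the sharper bound $\lambda_n^{-1}(1-e^{-\lambda_n})$ holds, and $1-e^{-\lambda}\leq\min\{1,\lambda\}$ recovers what you need), but a self-contained write-up would need the explicit formula for $g_A$ in terms of Poisson tail sums and the unimodality argument to establish it. Since the paper itself offers no proof at all, your proposal supplies more than the paper does, and what it supplies is correct and is essentially the argument of \cite{BarEag}.
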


The expectation of $F(t,n)$ is $\E F(t,n) := \lambda(t,n) = n^d \rho_1$, where
\[
\rho_1 := \P_{p_n}(E_{d,r}(t,n,x)).
\]
We also need to bound the quantity
\[
\rho_2 := \max \big\{ \P_{p_n}\big(E_{d,r}(t,n,x) \cap E_{d,r}(t,n,y)\big) \, : \, \|x-y\| \leq 2t \big\},
\]
which we shall use to bound the $p_{ij}$ in Theorem \ref{th:steinchen}. The condition $\|x-y\| \leq 2t$ is equivalent to the statement that the events $E_{d,r}(t,n,x)$ and $E_{d,r}(t,n,y)$ are dependent.

The following lemma is a simple computation and is similar to Lemma 18 of \cite{BHSU}.

\begin{lemma}\label{le:tbound}
Let $t=o((\log n/\log\log n)^{d-r+1})$ and let
\begin{equation}\label{eq:qbound}
q_n = O(n^{-d/m_{d,r}(t)}\log n).
\end{equation}
Then for any constant $c>0$ we have $t^c q_n = o(1)$. \qed
\end{lemma}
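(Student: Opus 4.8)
The plan is to take logarithms and reduce the statement to a comparison of growth rates. Since $q_n = O\big(n^{-d/m_{d,r}(t)}\log n\big)$, for every fixed $c>0$ we have
\[
\log\big(t^c q_n\big) \;\leq\; c\log t + \log\log n - \frac{d\log n}{m_{d,r}(t)} + O(1),
\]
so it is enough to show that the term $\frac{d\log n}{m_{d,r}(t)}$ eventually dominates $c\log t + \log\log n$; then $\log(t^c q_n)\to-\infty$ and hence $t^c q_n = o(1)$. (If $t=0$ the statement is trivial, so we may assume $t\geq 1$.)

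The first substantive step is a crude polynomial upper bound on $m_{d,r}(t)$, namely $m_{d,r}(t) = O_d\big(t^{d-r+1}\big)$. One clean way to see this is via Lemma \ref{le:canonsize}: $m_{d,r}(t)$ equals the size of a $(d,r)$-canonical subset of $B_t$, which by definition is contained in a union of $2^{r-1}$ translates of a $(d-r+1)$-dimensional $\ell_1$ ball of radius $t$, so $m_{d,r}(t)\leq 2^{r-1}(2t+1)^{d-r+1}$. (Equivalently, bound the iterated sum in \eqref{eq:mt} directly: the innermost factor $\binom{d}{i_{d-r+1}}$ vanishes once $i_{d-r+1}>d$, so that index is effectively constrained to a set of size at most $d+1$, while the remaining $d-r+1$ nested indices range over $\{0,\dots,t\}$ and contribute a factor $O(t^{d-r+1})$, and $\sum_{j=0}^{d}\binom{d}{j}=2^d$.) Feeding in the hypothesis on $t$ — which gives $t^{d-r+1} = o(\log n/\log\log n)$ — we obtain $m_{d,r}(t) = o(\log n/\log\log n)$, and therefore $\frac{d\log n}{m_{d,r}(t)} = \omega(\log\log n)$.

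It remains to dispose of the lower-order terms. The assumed bound on $t$ also forces $t \leq \log n$ for all large $n$, whence $c\log t \leq c\log\log n$, so
\[
c\log t + \log\log n \;\leq\; (c+1)\log\log n \;=\; o\!\left(\frac{d\log n}{m_{d,r}(t)}\right).
\]
Combining this with the first display gives $\log(t^c q_n) \leq -\big(1-o(1)\big)\frac{d\log n}{m_{d,r}(t)} + O(1) \to -\infty$, which proves the lemma.

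The only step that is not pure bookkeeping is the estimate $m_{d,r}(t) = O_d(t^{d-r+1})$ — in particular the fact that the exponent is $d-r+1$ rather than $d-r+2$, i.e.\ that the innermost binomial coefficient in \eqref{eq:mt} removes one effective degree of freedom from the $(d-r+2)$-fold iterated sum. After that, the argument is just a comparison of $\log n$, $\log\log n$ and $\log t$.
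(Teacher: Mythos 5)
Your argument is correct and is almost certainly what the authors intended: the paper offers no proof (it calls the lemma ``a simple computation''), and taking logarithms, bounding $m_{d,r}(t)=O_d(t^{d-r+1})$ via the canonical-set description (or directly from the iterated sum, as you note), and comparing growth rates is the natural route. One point you should make explicit rather than leave implicit: your key step ``which gives $t^{d-r+1}=o(\log n/\log\log n)$'' uses the hypothesis $t=o\bigl((\log n/\log\log n)^{1/(d-r+1)}\bigr)$, whereas the lemma as printed has the exponent $d-r+1$ rather than $1/(d-r+1)$. The printed exponent appears to be a typo: it disagrees with Theorems~\ref{th:main} and~\ref{th:conc}, and under it the lemma is actually false for $d>r$ — if $t$ is close to $(\log n/\log\log n)^{d-r+1}$ then $m_{d,r}(t)\gg\log n$, so $n^{-d/m_{d,r}(t)}\log n\to\infty$ and the stated $O$-bound on $q_n$ permits $q_n$ bounded away from zero, whence $t^cq_n\to\infty$. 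Your correction is the right one (and your proof essentially demonstrates why it is forced), but it is worth flagging explicitly rather than silently adopting the corrected exponent.
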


\begin{lemma}\label{le:rho}
Let $t=o((\log n/\log\log n)^{d-r+1})$ and let $q_n$ satisfy \eqref{eq:qbound}. Then
\begin{equation}\label{eq:rho1}
\rho_1 = (1+o(1)) g_{d,r} q_n^{m_{d,r}(t)},
\end{equation}
where
\[
g_{d,r} = \binom{d}{d-r+1} 2^{r-1} d^{2(d-r+1)}.
\]
Furthermore,
\begin{equation}\label{eq:rho2}
\rho_2 = O(q_n\rho_1) = o(\rho_1).
\end{equation}
\end{lemma}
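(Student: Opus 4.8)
The plan is to evaluate $\rho_1$ and $\rho_2$ directly, by summing over initially‑uninfected configurations and controlling the sums sphere by sphere using the three extremal theorems together with Lemma~\ref{le:tbound}.

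For $\rho_1$, fix the site $0$. The event $E_{d,r}(t,n,0)$ depends only on the initial states of the sites of $B_t$, and, since the property of protecting $0$ is upward closed under inclusion, $0\notin A_t$ holds exactly when the set $U$ of initially uninfected sites of $B_t$ protects $0$. Hence $\rho_1=\sum_U q_n^{|U|}(1-q_n)^{|B_t|-|U|}$, the sum being over all $U\subseteq B_t$ that protect $0$. By Theorem~\ref{th:min} every such $U$ has $|U|\ge m$, where $m:=m_{d,r}(t)$; write $|U|=m+a$. By Theorem~\ref{th:extremal} the protecting sets with $a=0$ are precisely the $g_{d,r}$ semi-canonical sets, contributing $g_{d,r}q_n^m(1-q_n)^{|B_t|-m}$; since $|B_t|=O(t^d)$ with $d$ a constant, Lemma~\ref{le:tbound} gives $q_n|B_t|=o(1)$, so $(1-q_n)^{|B_t|-m}=1-o(1)$ and this term equals $(1+o(1))g_{d,r}q_n^m$. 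For each $a\ge 1$ the number of protecting sets of size $m+a$ is $t^{O(a)}$ by the corollary of Theorem~\ref{th:stability}, so these contribute at most $\sum_{a\ge 1}t^{O(a)}q_n^{m+a}=q_n^m\sum_{a\ge1}(t^{O(1)}q_n)^a$, which is $o(q_n^m)$ by Lemma~\ref{le:tbound}. Adding these gives \eqref{eq:rho1}.

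For $\rho_2$, fix two distinct sites at $\ell_1$-distance at most $2t$ and translate them to $0$ and $v\neq 0$. The event $E_{d,r}(t,n,0)\cap E_{d,r}(t,n,v)$ depends only on the set $U$ of uninfected sites of $B_t(0)\cup B_t(v)$, and holds iff $U\cap B_t(0)$ protects $0$ and $U\cap B_t(v)$ protects $v$; in particular $|U\cap B_t(0)|,|U\cap B_t(v)|\ge m$, so $|U|\ge m$, and if $|U|=m$ then $U$ would be a minimal protecting set simultaneously for $0$ and for $v$ --- by Theorem~\ref{th:extremal} a semi-canonical set centred at both --- which is impossible when $v\neq 0$, since the $\ell_1$ constraint forces two semi-canonical sets with distinct centres inside a common ball to have different extreme sites. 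Thus $|U|\ge m+1$. Writing $U=P_0\cup P_v$ with $P_0=U\cap B_t(0)$ and $P_v=U\cap B_t(v)$, the principal contribution --- from $P_0$ and $P_v$ both exactly semi-canonical --- is at most $\sum_{K_0,K_v}q_n^{|K_0\cup K_v|}\le g_{d,r}^2\,q_n^{m+1}$ (since $K_0\neq K_v$ forces $|K_0\cup K_v|\ge m+1$), and this is $O(q_n\rho_1)$ by \eqref{eq:rho1}. For the remaining contributions, where $P_0$ or $P_v$ has excess at least $1$, the number of such $U$ of size $m+a$ is $t^{O(a)}$ by Theorem~\ref{th:stability} (each of $P_0,P_v$ has excess at most $a$, and $U$ is determined by the pair), so these contribute $\sum_{a\ge1}t^{O(a)}q_n^{m+a}=o(q_n^m)=o(\rho_1)$ by Lemma~\ref{le:tbound}; a finer analysis --- using that if a near-canonical protecting set for $0$ and one for $v$ must overlap in all but $O(a)$ sites then their canonical cores coincide on almost all of the ball, which pins down both orientations and essentially all of the positions of the non-canonical sites --- reduces this count enough to give the sharper estimate $O(q_n\rho_1)$, and hence \eqref{eq:rho2}.

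The layer-by-layer sums and the invocations of Lemma~\ref{le:tbound} are routine; the real difficulty is the geometric input behind $\rho_2$, namely understanding the possible configurations of two near-minimal protecting sets centred at distinct sites that are forced to almost coincide. This is what turns the easy bound $\rho_2=o(\rho_1)$ into the stated $\rho_2=O(q_n\rho_1)$, and it is where one must work hardest.
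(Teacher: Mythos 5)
Your argument is correct and follows essentially the same route as the paper: write $\rho_1$ as a sum over the excess $a=|U|-m_{d,r}(t)$, use Theorem~\ref{th:extremal} to count the $a=0$ contributions as $g_{d,r}$, use (the corollary of) Theorem~\ref{th:stability} to bound the number of configurations with excess $a$ by $t^{O(a)}$, and use Lemma~\ref{le:tbound} to show the excess terms are lower order; and for $\rho_2$, observe that a minimal protecting set cannot be simultaneously semi-canonical for two distinct sites, forcing $|U|\geq m_{d,r}(t)+1$ and then count as before. The one place you part ways is that you candidly flag that the counting argument by itself only yields $\rho_2 = O(t^{O(1)}q_n^{m+1}) = o(\rho_1)$ rather than the stated $O(q_n\rho_1)$, and you leave the ``finer analysis'' needed to remove the $t^{O(1)}$ factor as a sketch; the paper's own proof is equally brief here and simply asserts the bound. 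That loss is immaterial for Theorem~\ref{th:dist}, where the error term is already $O(t^d q_n)$, so Lemma~\ref{le:tbound} absorbs the extra polynomial factor in $t$ --- it would be cleaner (and in the spirit of the paper's sketch) to just record $\rho_2 = t^{O(1)} q_n \rho_1 = o(\rho_1)$ and note this suffices downstream.
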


\begin{proof}
We only sketch the proof, since it is similar to the proofs of several lemmas in Section 4 of \cite{BHSU}. Let $g_{d,r}(t,k)$ be the number of arrangements of $m_{d,r}(t)+k$ uninfected sites in $B_t$ such that the origin is protected. Thus,
\begin{equation}\label{eq:rho1sum}
\rho_1 = \sum_{k=0}^{|B_t|-m_{d,r}(t)} g_{d,r}(t,k) p_n^{|B_t|-m_{d,r}(t)-k} q_n^{m_{d,r}(t)+k}.
\end{equation}
Theorem \ref{th:extremal} implies that
\[
g_{d,r}(t,0) = \binom{d}{d-r+1} 2^{r-1} d^{2(d-r+1)} = g_{d,r},
\]
while Theorem \ref{th:stability} allows us to bound $g_{d,r}(t,k)$ for general $k$ by
\begin{equation}\label{eq:gbound}
g_{d,r}(t,k) = t^{O(k)}.
\end{equation}
It is now easy to see that these last two equations combined with \eqref{eq:rho1sum} give the bound we want on $\rho_1$ in \eqref{eq:rho1}.

For \eqref{eq:rho2}, we follow Lemmas 18 and 19 of \cite{BHSU}. The key point is that a set of sites cannot be a semi-canonical set for two distinct sites, so if $x$ and $y$ are protected then $B_t(x)\cup B_t(y)$ contains at least $m_{d,r}(t)+1$ uninfected sites. This means that
\[
\rho_2 \leq \sum_{k=0}^{2|B_t|} h_{d,r}(t,k) q_n^{m(t)+1+k},
\]
where $h_{d,r}(t,k)$ is the number of configurations of $m(t)+1+k$ uninfected sites in $B_t(x)\cup B_t(y)$ such that both $x$ and $y$ are protected. Using the bound on $g_{d,r}(t,k)$ from \eqref{eq:gbound} we obtain a similar bound on $h_{d,r}(t,k)$, namely
\[
h_{d,r}(t,k) = O(t^{O(k)}).
\]
We are now able to estimate $\rho_2$ in much the same way that we estimated $\rho_1$.
\end{proof}

The following is an easy consequence of Lemma \ref{le:rho} and Theorem \ref{th:steinchen}.

\begin{theorem}\label{th:dist}
Let $2\leq r\leq d$, let $t=o((\log n/\log\log n)^{d-r+1})$, let $q_n$ satisfy \eqref{eq:qbound}, and let $X(t,n)\sim\Po(\lambda(t,n))$. Then
\[
\sup_{A\subset\Z} \big| \P(F(t,n)\in A) - \P(X(t,n)\in A) \big| = O(t^dq_n) = o(1). \tag*{\qedsymbol}
\]
\end{theorem}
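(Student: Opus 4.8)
The plan is to apply the Stein--Chen bound of Theorem \ref{th:steinchen} directly to the indicator variables $X_x := F_{d,r}(t,n,x)$, $x \in \T_n^d$, whose sum is $F(t,n)$. First I would record that by translation-invariance of the torus and of the product measure $\P_{p_n}$, each $X_x$ is a Bernoulli variable with the same parameter $\rho_1$, so that $\lambda_n = \E F(t,n) = n^d\rho_1 = \lambda(t,n)$. For the dependency structure I would use the locality fact already noted in the text, namely that the event $E_{d,r}(t,n,x)$ is determined by the initial states of the sites in $B_t(x)$; hence $X_x$ is independent of $\{X_y : \|x-y\|>2t\}$, and one may take $N_x = \{y\in\T_n^d : \|x-y\|\le 2t\}$, so that $|N_x| = |B_{2t}| = O(t^d)$.

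Next I would feed these quantities into Theorem \ref{th:steinchen}. The first double sum there becomes $\sum_x\sum_{y\in N_x}\rho_1^2 = n^d|B_{2t}|\rho_1^2$, while the second is $\sum_x\sum_{y\in N_x\setminus\{x\}}p_{xy} \le n^d|B_{2t}|\rho_2$, the inequality being immediate from the definition of $\rho_2$ since every such $y$ satisfies $\|x-y\|\le 2t$. Bounding $\min\{1,\lambda_n^{-1}\}\le\lambda_n^{-1}$ and then dividing through by $\lambda_n = n^d\rho_1$, the Stein--Chen estimate collapses to $|B_{2t}|\,(\rho_1 + \rho_2/\rho_1)$.

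Finally I would invoke Lemma \ref{le:rho}: it gives $\rho_2/\rho_1 = O(q_n)$, and since $m_{d,r}(t)\ge 1$ and $q_n\le 1$ the same lemma gives $\rho_1 = (1+o(1))g_{d,r}q_n^{m_{d,r}(t)} = O(q_n)$. Together with $|B_{2t}| = O(t^d)$ this yields the bound $O(t^d q_n)$, and an application of Lemma \ref{le:tbound} with the constant $c=d$ converts this into $o(1)$, completing the proof.

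I do not expect a genuine obstacle here: the real content of the result is packed into the three extremal theorems and their probabilistic consequence Lemma \ref{le:rho}, in particular the bound $\rho_2 = O(q_n\rho_1)$, which encodes the geometric fact that two protected sites force strictly more than $m_{d,r}(t)$ uninfected sites into $B_t(x)\cup B_t(y)$. The only points that need any care are confirming that the dependency range is exactly $2t$ (so $|N_x| = O(t^d)$) and checking that, after division by $\lambda_n$, both error terms are genuinely of order $t^d q_n$ rather than something larger — which is immediate once one notes $\rho_1 \le q_n$.
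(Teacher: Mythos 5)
Your proposal is correct and is exactly the intended argument: the paper explicitly describes Theorem~\ref{th:dist} as "an easy consequence of Lemma~\ref{le:rho} and Theorem~\ref{th:steinchen}," and your route—Stein--Chen with dependency neighbourhoods $N_x = B_{2t}(x)$, reducing the bound to $|B_{2t}|(\rho_1 + \rho_2/\rho_1)$, then invoking $\rho_1 = O(q_n)$ and $\rho_2/\rho_1 = O(q_n)$ from Lemma~\ref{le:rho} and finishing with Lemma~\ref{le:tbound}—is precisely that consequence spelled out. All the steps (the $2t$ dependency radius, $|B_{2t}| = O(t^d)$, $q_n^{m_{d,r}(t)} \le q_n$) check out.
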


\begin{proof}[Proof of Theorem \ref{th:main}]
The probability of percolating in time at most $t$ is increasing in $p$, by a standard coupling argument. Therefore, we may assume that the usual bound \eqref{eq:qbound} on $q_n$ holds. Theorem \ref{th:dist} tells us that $F(t,n)$ converges in distribution to $\Po(\lambda(t,n))$, so
\[
\P_{p_n}(T\leq t) = \P_{p_n}(F(t,n)=0) = (1+o(1))e^{-\lambda(t,n)}.
\]
We have
\[
\lambda(t,n) = n^d \rho_1 = n^d \Theta(1) q_n^{m_{d,r}(t)}
\]
by Lemma \ref{le:rho}. Therefore,
\[
\P_{p_n}(T\leq t) \to
\begin{cases}
1 & \text{if } q_n \leq (n^{-d}/\omega(n))^{1/m_{d,r}(t)}, \\
0 & \text{if } q_n \geq (n^{-d}\omega(n))^{1/m_{d,r}(t)},
\end{cases}
\]
for some function $\omega(n)\to\infty$, as required.
\end{proof}

\begin{proof}[Proof of Theorem \ref{th:conc}]
Part (i) of the theorem is an immediate corollary of Theorem \ref{th:main}. For part (ii), we are given that $q_n$ satisfies
\[
(n^{-d}/\omega(n))^{1/m_{d,r}(t)} \leq q_n \leq (n^{-d}\omega(n))^{1/m_{d,r}(t)}
\]
for all $\omega(n)\to\infty$. Observe that if
\[
\omega(n) = \exp\left(c_0\frac{\log n}{t}\right)
\]
for a sufficiently small constant $c_0$, then
\[
(n^{-d}/\omega(n))^{1/m_{d,r}(t)} \geq (n^{-d}\omega(n))^{1/m_{d,r}(t-1)}.
\]
Therefore, $\P_{p_n}(T\leq t-1) = o(1)$ by Theorem \ref{th:main}. Similarly we have $\P_{p_n}(T\geq t+2)=o(1)$. So $T\in\{t,t+1\}$ with high probability.

Now let $q_n^{m_{d,r}(t)}n^d \to c$ as $n\to\infty$. Then
\[
\P_{p_n}(T=t) \sim \P_{p_n}(T\leq t) \sim e^{-\lambda(t,n)} \sim \exp\big( -n^d g_{d,r} q_n^{m_{d,r}(t)} \big) \sim \exp(-g_{d,r}c).
\]
Since $T\in\{t,t+1\}$ with high probability, we must also have
\[
\P_{p_n}(T=t+1) \sim 1 - \exp(-g_{d,r}c). \qedhere
\]
\end{proof}

That completes the proofs of the main results. We now briefly turn our attention to the rather easier setting of subcritical models.

\section{Subcritical models}\label{se:sub}

In this final section we give a complete description of the time for percolation in the case of subcritical models. Here, subcritical means that there exist closed cofinite sets; in $d$ dimensions, this means that the threshold $r$ is strictly greater than $d$. Our description of the percolation time is valid for all $p$ for which percolation occurs with high probability; we determine how large $q=1-p$ needs to be for this to be the case, and as a corollary we determine the critical probabilities for percolation under subcritical models on the torus.

\begin{lemma}\label{le:sub}
Let $3\leq d+1\leq r\leq 2d$ and $k\geq 0$. Suppose the origin is protected under the $r$-neighbour model. Then
\begin{equation}\label{eq:subkey}
|P(B_k)| \geq \binom{2d-r+1}{k}.
\end{equation}
\end{lemma}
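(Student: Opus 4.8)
The plan is to deduce \eqref{eq:subkey} from a stronger statement phrased in the language of compatibility functions, exactly paralleling Lemma~\ref{le:key} but in the subcritical range: if $x$ is protected under the $r$-neighbour model with $d+1\le r\le 2d$, and $C$ is a compatibility function with no fixed coordinates and $f$ free coordinates, then
\[
|P_k^C(x)| \ge \binom{f-(r-d-1)}{k}.
\]
Taking $x=0$ and $C\equiv\ast$ (all $d$ coordinates free) gives $|P(B_k)|\ge|P(S_k)|=|P_k^C(0)|\ge\binom{2d-r+1}{k}$, which is the lemma; and the case $r=d+1$ of the strengthened statement recovers Lemma~\ref{le:d+1}, as asserted earlier in the introduction. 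The exponent $f-(r-d-1)$ is exactly the number of protected neighbours of $x$ that the definition of \emph{protected} forces to lie among the $C$-compatible (equivalently, outward) neighbours of $x$: at the last time its state is relevant $x$ has at most $r-1$ infected neighbours, hence at least $2d-r+1$ protected ones, of which at most $d-f$ can fail to be $C$-compatible.

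First I would set up the induction along the lines of the proof of Lemma~\ref{le:key}, but it should be considerably simpler here, because the right-hand side is a single binomial coefficient rather than an iterated sum; the roles played by Lemmas~\ref{le:minibinom} and~\ref{le:megabinom} in Section~\ref{se:min} are taken over entirely by Pascal's identity $\binom{g}{k}=\binom{g-1}{k-1}+\binom{g-1}{k}$. The base case $k=0$, and the cases $f\le r-d$ in which the right-hand side is $0$ (for $k\ge 1$), are immediate. For the inductive step, assume $x$ has at least one protected $C$-compatible neighbour (otherwise there is nothing to prove) and split into cases. If some directional (non-free) coordinate $i$ has $x+e_i$ protected, then $P_{k-1}^C(x+e_i)\subseteq P_k^C(x)$, and the remaining required sites lie in the codimension-one subspace $\{y:y_i=x_i\}$, to which the induction hypothesis applies in dimension $d-1$ with the same number of free coordinates; Pascal then closes the estimate. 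If instead every protected $C$-compatible neighbour of $x$ lies in a free direction, one either finds an opposing pair $x\pm e_j$ of protected neighbours and partitions $P_k^C(x)$ into three disjoint pieces according to whether the $j$th coordinate exceeds, equals, or falls below $x_j$, or else recurses along a single protected free direction; in each subcase one applies the induction hypothesis to each piece and recombines via Pascal.

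The step I expect to be the main obstacle is precisely the last one: a site $x$ all of whose forward protected neighbours lie in free directions, with no opposing pair among them. A naive passage to a hyperplane transverse to one of those directions loses two protected neighbours, and the resulting bound is off by one level. The resolution is that recursing along a \emph{solo} free direction $j$ costs only the single protected neighbour $x+e_j$ — the opposite neighbour $x-e_j$ was not protected in the first place — so the hyperplane $\{y:y_j=x_j\}$ still inherits enough protected neighbours for the induction. Making this work forces one to carry the exact number of protected $C$-compatible neighbours of $x$ through the induction rather than just the combinatorial quantity $f$, and to use the (standard but slightly delicate) fact that the collection of protected sites restricts correctly to a coordinate subspace so that the induction hypothesis may legitimately be invoked there. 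This is the same difficulty that arises in the proof of Lemma~\ref{le:d+1} in \cite{BHSU}, and I would handle it in the same way; apart from it, the subcritical argument is routine bookkeeping with Pascal's rule.
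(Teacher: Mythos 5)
Your proposal takes a radically different, and much harder, route than the paper. The paper proves Lemma~\ref{le:sub} with a short double-counting argument: for each $k$, count the edges of the bipartite graph $H_k$ between $P(S_{k-1})$ and $P(S_k)$. Each $x\in P(S_k)$ has at most $k$ nonzero coordinates, hence at most $k$ neighbours in $S_{k-1}$; each $y\in P(S_{k-1})$ is protected, so has at least $2d-r+1$ protected neighbours, of which at most $k-1$ lie in $S_{k-2}$, leaving at least $2d-r-k+2$ in $P(S_k)$. Thus $k\,|P(S_k)|\geq(2d-r-k+2)\,|P(S_{k-1})|$, and iterating from $|P(S_0)|=1$ gives $|P(S_k)|\geq\binom{2d-r+1}{k}$. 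No induction on $d-r$, no compatibility functions, no hyperplane passage. (Incidentally, the same edge count restricted to forward $C$-compatible edges proves your strengthening $|P_k^C(x)|\geq\binom{f-(r-d-1)}{k}$ just as directly, but that strengthening is not needed for the lemma.)

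Your own route, an analogue of Lemma~\ref{le:key}, has a genuine gap precisely where you flag it, and the fix you gesture at does not close it. In the solo-free-direction subcase, the two disjoint pieces $P_{k-1}^{C_j^+}(x+e_j)$ and $P_k^{C_j^0}(x)$ give, under the hypothesis as stated, $\binom{n-1}{k-1}+\binom{n-2}{k}$ with $n:=f-(r-d-1)$, which falls short of $\binom{n}{k}=\binom{n-1}{k-1}+\binom{n-2}{k-1}+\binom{n-2}{k}$ by $\binom{n-2}{k-1}$. Your proposed remedy, carrying the exact number $m$ of protected $C$-compatible neighbours rather than the guaranteed minimum $n$, does repair this subcase (the subspace then inherits $m-1$ rather than $n-2$), but it breaks the directional case: when $x+e_1$ is a protected directional neighbour, the induction hypothesis applied to $x+e_1$ yields only $\binom{n}{k-1}$ for $P_{k-1}^C(x+e_1)$, since you have no control on how many protected $C$-compatible neighbours $x+e_1$ has beyond the generic lower bound $n$, and $\binom{n}{k-1}+\binom{m-1}{k}$ is strictly less than $\binom{m}{k}$ once $m\geq n+2$. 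So the strengthened hypothesis as you have framed it does not support the induction; something more careful would be required, and it is not supplied. The paper's edge count avoids the case analysis entirely.
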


\begin{proof}
The proof is the usual inductive double counting argument. Write $P_k$ for $P(S_k)$ and consider the bipartite graph $H_k$ with vertex sets $P_{k-1}$ and $P_k$, and edges induced by $\Z^d$. Note that $|P_0|=1$, as claimed. If $x\in P_k$ then $x$ has at most $k$ non-zero coordinates, so the degree of $x$ in $H_k$ is at most $k$. If $y\in P_{k-1}$ then, since $y$ is protected under the $r$-neighbour model, it follows that $y$ can have at most $r-1$ infected neighbours at time $(t-k-1)$, and hence it has at least $2d-r+1$ uninfected neighbours. Of these, at most $k-1$ are in $P_{k-2}$, since $y$ has at most $k-1$ non-zero coordinates, so $y$ has at least $2d-r-k+2$ protected neighbours in $P_k$.

We have proved that
\[
k|P_k| \geq (2d-r-k+2)|P_{k-1}|,
\]
which implies \eqref{eq:subkey}.
\end{proof}

The above proof holds for any $r$, but it is only tight for $r\geq d+1$.

For $d>r$, a subset $K$ of $B_t(x)$ is \emph{$(d,r)$-canonical} if there is a subset $I$ of $[d]$ of size $2d-r+1$ and $\epsilon_i\in\{-1,1\}$ for each $i\in I$ such that
\[
K = \{y\in B_t(x) : y_i-x_i=\epsilon_i \text{ for all $i\in I$ and } y_i=x_i \text{ otherwise}\}.
\]

\begin{lemma}
Let $3\leq d+1\leq r\leq 2d$ and $t\geq 0$. Suppose the origin is protected and $P(B_t)$ is minimal. Then $P(B_t)$ is $(d,r)$-canonical.
\end{lemma}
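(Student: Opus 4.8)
The plan is to extract rigidity from the double-counting argument that proves Lemma~\ref{le:sub} by observing that, under minimality, every inequality in that argument is forced to be an equality. Write $s=2d-r+1$, $P_k=P(S_k)$, and let $H_k$ be the bipartite graph between $P_{k-1}$ and $P_k$ with edges induced by $\Z^d$, exactly as in the proof of Lemma~\ref{le:sub}. Since $|P(B_t)|=\sum_{k=0}^t|P_k|$ and each $|P_k|\ge\binom{s}{k}$ by Lemma~\ref{le:sub}, minimality of $P(B_t)$ forces $|P_k|=\binom{s}{k}$ for every $0\le k\le t$. Now a site $x\in P_k$ has at most $k$ neighbours in $S_{k-1}$, with equality only if $x$ has exactly $k$ nonzero coordinates (each then $\pm1$) and all of them are protected; and a protected $y\in P_{k-1}$ has at least $2d-r-k+2$ neighbours in $P_k$. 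Because $k\binom{s}{k}=(2d-r-k+2)\binom{s}{k-1}$, the two sums $\sum_{x\in P_k}d_{H_k}(x)$ and $\sum_{y\in P_{k-1}}d_{H_k}(y)$ coincide, so both families of inequalities hold with equality. This yields the single structural fact we shall use: for each $1\le k\le t$, every $x\in P_k$ has exactly $k$ nonzero coordinates, each equal to $\pm1$, and every inward neighbour $x-(\operatorname{sign}x_i)e_i$ of $x$ lies in $P_{k-1}$.

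I would then prove by induction on $k$ that $P(B_k)$ is $(d,r)$-canonical for $1\le k\le t$; the case $k=t$ is the lemma. For the base case $k=1$: the origin is protected, so $P_1$ is an $s$-element subset of $\{\pm e_i:i\in[d]\}$, and it suffices to show $P_1$ contains no opposing pair $\{e_i,-e_i\}$, since then $P_1=\{\epsilon_ie_i:i\in I\}$ for some $I\in\binom{[d]}{s}$ and signs $\epsilon_i\in\{-1,1\}$, i.e.\ $P(B_1)$ is $(d,r)$-canonical. Suppose $\{e_i,-e_i\}\subseteq P_1$. Applying the structural fact with $k=2$ (available since $t\ge2$), every $x\in P_2$ is the sum $v+v'$ of its two inward neighbours, which are distinct elements of $P_1$ with $v+v'\ne0$; moreover distinct such unordered pairs give distinct sums, because the two nonzero coordinates of $v+v'$ together with their signs recover $\{v,v'\}$. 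Hence $|P_2|\le\binom{s}{2}-(\text{number of opposing pairs in }P_1)<\binom{s}{2}$, contradicting $|P_2|=\binom{s}{2}$.

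For the induction step, let $2\le k\le t$ and suppose $P(B_{k-1})$ is $(d,r)$-canonical, so $P_{k-1}=\{\sum_{j\in J}\epsilon_je_j:J\in\binom{I}{k-1}\}$ for some $I$ of size $s$ and signs $\epsilon_j$. Take $x\in P_k$. By the structural fact $x$ is a $\pm1$-vector whose support $J_x$ has size $k\ge2$, and $x-(\operatorname{sign}x_i)e_i\in P_{k-1}$ for each $i\in J_x$. Comparing the support and the nonzero entries of $x-(\operatorname{sign}x_i)e_i$ with the canonical description of $P_{k-1}$, and using that $|J_x|\ge2$ (so that each coordinate of $x$ is undisturbed in at least one of these inward neighbours), one reads off that $J_x\subseteq I$ and $x_j=\epsilon_j$ for all $j\in J_x$. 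Thus $P_k\subseteq\{\sum_{j\in J}\epsilon_je_j:J\in\binom{I}{k}\}$, a set of size $\binom{s}{k}=|P_k|$; equality holds, and $P(B_k)=P(B_{k-1})\cup P_k$ is $(d,r)$-canonical with the same $I$ and signs. Taking $k=t$ finishes the proof. (When $t\le1$ the statement is degenerate: $P(B_0)=\{0\}$, and for $t=1$ the base case above is exactly the conclusion.)

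The subcritical regime is genuinely easier than the critical one — there is no higher-dimensional geometry to keep track of — so apart from the bookkeeping the only step that needs an idea is the base case, namely forbidding opposing pairs in the first layer in order to fix a well-defined set $I$ of moved coordinates; once that is in place, the induction is a purely local propagation of the canonical pattern out through the spheres.
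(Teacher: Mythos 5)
Your proof is correct for $t\geq 2$, and it takes essentially the same route that the paper gestures at (``similar to the proof of Theorem~\ref{th:extremal}''): an inductive degree double-count between consecutive spheres. What you do slightly differently, and better for this regime, is to extract all the rigidity up front. Writing $s=2d-r+1$, you observe that minimality forces $|P_k|=\binom{s}{k}$ for every $k$, and then the identity $k\binom{s}{k}=(s-k+1)\binom{s}{k-1}$ pins both sides of the double count to equality simultaneously, giving the single ``structural fact'' that every $x\in P_k$ is a $\pm 1$-vector of support size exactly $k$ all of whose inward neighbours lie in $P_{k-1}$. The base case (no opposing pair in $P_1$, via injectivity of $\{v,v'\}\mapsto v+v'$ and the count $|P_2|=\binom{s}{2}$) and the induction step (reading $J_x\subseteq I$ and $x_j=\epsilon_j$ off any inward neighbour that keeps coordinate $j$ intact, available since $|J_x|\geq 2$) both check out. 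This is cleaner than a literal transcription of the proof of Theorem~\ref{th:extremal} precisely because in the subcritical range there is no semi-canonical intermediate class to introduce in Part~(A) and then eliminate in Part~(B): a radius-$t$ canonical set here has no degree-one sites (its boundary sites all have degree $t\geq 2$ within the set), so ``extreme-site swaps'' produce nothing new, and your direct propagation is the right simplification.

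The one place you go wrong is the closing parenthetical about $t\leq 1$. Your base-case argument needs the structural fact at $k=2$, hence $t\geq 2$; it does \emph{not} cover $t=1$, and in fact the statement is false at $t=1$ whenever $2d-r+1\geq 2$. For $t=1$, a site $y\in S_1$ is protected iff $y\notin A_0$, so $P_1$ is simply a size-$s$ set of initially uninfected neighbours of the origin, and nothing prevents it from containing an opposing pair $\{e_i,-e_i\}$; such a $P(B_1)$ is minimal but not $(d,r)$-canonical. This appears to be an oversight in the lemma's hypotheses as stated (the paper writes $t\geq 0$ but refers to the proof of Theorem~\ref{th:extremal}, which assumes $t\geq 2$). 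You should restrict to $t\geq 2$ (or $t=0$, which is vacuous) rather than claim the base case handles $t=1$. Relatedly, note that the paper's displayed definition of a subcritical $(d,r)$-canonical set has two apparent typos (``$d>r$'' for ``$r>d$'', and ``$y_i-x_i=\epsilon_i$'' where $y_i-x_i\in\{0,\epsilon_i\}$ is clearly intended, else $K$ is a singleton); your reading, under which $K\cap S_k=\{\sum_{j\in J}\epsilon_j e_j:J\in\binom{I}{k}\}$, is the correct one and matches the count $\binom{s}{k}$.
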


We omit the proof since it is similar to the proof of Theorem \ref{th:extremal}.

\begin{theorem}\label{th:sub}
Let $3\leq d+1\leq r\leq 2d$ and $t\geq 0$. Let $(p_n)_{n=1}^\infty$ be a sequence of probabilities and let $\omega(n)\to\infty$.
\begin{enumerate}
\item If $t\leq 2d-r+1$ and
\[
n^{-d/m_{d,r}(t-1)}\omega(n) \leq q_n \leq n^{-d/m_{d,r}(t)}/\omega(n),
\]
then $T=t$ with high probability.
\item If $t\leq 2d-r+1$ and $q_n^{m_{d,r}(t)}n^d \to c$ as $n\to\infty$, for some constant $c>0$, then $T\in\{t,t+1\}$ with high probability.
\item If
\begin{equation}\label{eq:qlarge}
q_n \geq n^{-d/m_{d,r}(2d-r+1)}\omega(n) = n^{-d/2^{2d-r+1}}\omega(n),
\end{equation}
then $T=\infty$ with high probability.
\end{enumerate}
\end{theorem}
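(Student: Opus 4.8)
The plan is to mirror the argument of Section~\ref{se:main}, which is substantially easier here: since $d+1\le r\le 2d$ we have $2d-r+1\le d$, so every time parameter occurring in parts~(i) and~(ii) is bounded by the constant $2d-r+1$, and Lemma~\ref{le:sub} together with the (unnumbered) lemma preceding Theorem~\ref{th:sub} completely describes the minimal protecting sets as $(d,r)$-canonical ``cubes''. As in Section~\ref{se:main}, write $F(t,n)$ for the number of sites of $\T_n^d$ uninfected at time~$t$ and put $\lambda(t,n)=n^d\rho_1$ with $\rho_1=\P_{p_n}(E_{d,r}(t,n,x))$. First I would prove the subcritical analogue of Lemma~\ref{le:rho}. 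Because $B_t$ is a fixed finite set, $\rho_1$ is a polynomial in $q_n$; by Lemma~\ref{le:sub} no site can be protected by fewer than $m_{d,r}(t)=\sum_{k=0}^{t}\binom{2d-r+1}{k}$ uninfected sites in $B_t$, and by the preceding lemma the configurations attaining this bound are exactly the $(d,r)$-canonical subsets of $B_t$, of which there is a constant number $g=g_{d,r}(t)$. Hence the lowest-order term of $\rho_1$ is $g\,q_n^{m_{d,r}(t)}$ and every higher-order term is smaller by a factor $O(q_n)$ (the relevant configuration counts are all bounded by constants, $|B_t|$ being constant), so $\rho_1=(1+o(1))g\,q_n^{m_{d,r}(t)}$ whenever $q_n=o(1)$; the same reasoning, using that a canonical set cannot protect two distinct sites, gives $\rho_2=O(q_n\rho_1)=o(\rho_1)$.

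Feeding these estimates into Theorem~\ref{th:steinchen} exactly as in the proof of Theorem~\ref{th:dist} (with $N_x=\{y:\|x-y\|\le 2t\}$, and using that $t$ is constant) yields $\sup_{A\subset\Z}\bigl|\P(F(t,n)\in A)-\P(\Po(\lambda(t,n))\in A)\bigr|=O(t^dq_n)=o(1)$ for $q_n=o(1)$, so in the ranges relevant to parts~(i) and~(ii) the variable $F(t,n)$ is asymptotically $\Po(\lambda(t,n))$ with $\lambda(t,n)=\Theta(n^dq_n^{m_{d,r}(t)})$. Part~(i) is then immediate: the hypothesis $q_n\le n^{-d/m_{d,r}(t)}/\omega(n)$ forces $\lambda(t,n)\to0$, hence $\P(T\le t)=\P(F(t,n)=0)\to1$, while $q_n\ge n^{-d/m_{d,r}(t-1)}\omega(n)$ forces $\lambda(t-1,n)\to\infty$, hence $\P(T\le t-1)=\P(F(t-1,n)=0)\to0$; together these give $T=t$ with high probability. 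Part~(ii) is the same computation: $q_n^{m_{d,r}(t)}n^d\to c$ gives $\lambda(t,n)\to gc$, while the strict monotonicity $m_{d,r}(t-1)<m_{d,r}(t)<m_{d,r}(t+1)$ forces $\lambda(t-1,n)\to\infty$ and $\lambda(t+1,n)\to0$, so $\P(T\le t-1)\to0$ and $\P(T\ge t+2)=\P(F(t+1,n)>0)\to0$, and hence $T\in\{t,t+1\}$ with high probability. (At the boundary $t=2d-r+1$ the monotonicity $m_{d,r}(t)<m_{d,r}(t+1)$ degenerates, $m_{d,r}$ having stabilised at $2^{2d-r+1}$; there the same argument instead shows that the only alternative to $T=t$ is $T=\infty$, consistently with part~(iii).)

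Part~(iii) I would prove directly, without the Stein--Chen machinery, since now $q_n$ need not tend to $0$. A $(d,r)$-canonical set of radius $2d-r+1$ is a full $\{0,1\}$-cube spanning $2d-r+1$ of the $d$ coordinate directions; each of its $2^{2d-r+1}$ vertices has exactly $2d-r+1$ neighbours inside the cube, hence at most $r-1$ neighbours outside, so if the whole cube is uninfected at time~$0$ it stays uninfected forever and therefore $T=\infty$. Let $W$ count the entirely-uninfected cubes of this form in $\T_n^d$; there are $\binom{d}{2d-r+1}$ orientation classes and $\Theta(n^d)$ translates of each, so $\E W=\Theta\bigl(n^dq_n^{2^{2d-r+1}}\bigr)\ge\Theta(1)\,\omega(n)^{2^{2d-r+1}}\to\infty$ under~\eqref{eq:qlarge}. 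Two such cubes are independent unless they overlap, and each overlaps only $O(1)$ others, so $\operatorname{Var}(W)=O(\E W)$ and Chebyshev's inequality gives $\P(W=0)\le\operatorname{Var}(W)/(\E W)^2=O(1/\E W)\to0$. Thus with high probability some cube survives for all time, and $T=\infty$.

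The routine parts are the moment estimates and the Stein--Chen step, which are strictly simpler versions of Lemma~\ref{le:rho} and Theorem~\ref{th:dist}; I expect the only care to be needed in the near-minimal configuration count (here essentially trivial, since $B_t$ is a fixed finite object and no stability theorem is required, only Lemma~\ref{le:sub}) and in the bookkeeping at the boundary value $t=2d-r+1$ in part~(ii).
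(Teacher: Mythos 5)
Your proof is correct. Parts (i) and (ii) follow exactly the route the paper intends: it merely remarks that they ``follow using the same methods as Theorem~\ref{th:main},'' and you have filled in those details faithfully, in particular noting that since $t\le 2d-r+1\le d$ is bounded, $B_t$ is a fixed finite set, so Lemma~\ref{le:sub} and the unnumbered canonical-set lemma suffice and no stability theorem is needed — the paper makes the same observation after the theorem statement. Your aside on the boundary case $t=2d-r+1$, where $m_{d,r}$ stabilises at $2^{2d-r+1}$ and the dichotomy is really $T\in\{t,\infty\}$ rather than $T\in\{t,t+1\}$, is a sharp and correct reading of the situation.

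For part (iii) you take a genuinely different route from the paper. The paper restricts attention to the $n^d2^{-m}$ \emph{pairwise disjoint} translates of $\{0,1\}^m\times\{0\}^{d-m}$ (with $m=2d-r+1$): since these are disjoint, the events that each is entirely uninfected are mutually independent, so the probability that none survives is simply $\bigl(1-q_n^{2^m}\bigr)^{n^d2^{-m}}\le\exp\bigl(-2^{-m}\omega(n)^{2^m}\bigr)=o(1)$, with no second moment required. You instead count all $m$-cubes (all orientations and translates), and run a standard second-moment argument: $\E W\to\infty$ under~\eqref{eq:qlarge}, and because each cube overlaps only $O(1)$ others, with each off-diagonal covariance over overlapping $K\ne K'$ bounded by $q_n^{|K\cup K'|}\le q_n^{2^m+1}$, one gets $\operatorname{Var}(W)=O(\E W)$ and Chebyshev finishes. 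Both arguments are correct; the paper's use of a disjoint subfamily is marginally slicker because it sidesteps the variance estimate entirely, while your version has the small advantage of not requiring $n$ to be divisible by $2$ and of counting all orientations at once.
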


No stability theorem is needed for the proof of Theorem \ref{th:sub} because if percolation occurs then the assertion is that it occurs in a time that does not depend on $n$, so we only ever need to consider configurations inside $\ell_1$ balls of bounded size.

\begin{proof}
Parts (i) and (ii) of the theorem follow using the same methods as Theorem \ref{th:main}, so we only have to prove (iii). Let $m=2d-r+1$. It is sufficient to show that if $q_n$ satisfies \eqref{eq:qlarge} then with high probability $\T_n^d$ contains an uninfected $m$-dimensional hypercube at time $0$. More specifically, if all the sites in some translate of
\[
\{0,1\}^{m} \times \{0\}^{d-m}
\]
are initially uninfected, then they remain uninfected forever, so it suffices to prove that there exists an empty such translate with high probability. There are $n^d2^{-m}$ disjoint hypercubes of the given form, and the probability that none is initially uninfected is
\[
\big(1-q_n^{2^m}\big)^{n^d 2^{-m}} \leq \exp\big(-q_n^{2^m} n^d 2^{-m}\big) \leq \exp\big(-2^{-m}\omega(n)^{2^m}\big),
\]
which is $o(1)$.
\end{proof}

Finally, we note the corollary mentioned at the beginning of the section, which concerns the critical probabilities of subcritical models. Let $P(d,r,n,p)$ be the probability that a random set $A\subset \T_n^d$ percolates under the $r$-neighbour process, where sites are included in $A$ independently with probability $p$.

\begin{corollary}
Let $3\leq d+1\leq r\leq 2d$ and let $(p_n)_{n=1}^\infty$ be a sequence of probabilities.
\begin{enumerate}
\item If $q_n = O(n^{-d/m_{d,r}(2d-r+1)})$, then $P(d,r,n,p_n) \to 1$ as $n\to\infty$. \\
\item If $q_n \gg n^{-d/m_{d,r}(2d-r+1)}$, then $P(d,r,n,p_n) \to 0$ as $n\to\infty$. \qed
\end{enumerate}
\end{corollary}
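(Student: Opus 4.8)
The two parts of the corollary are the matching upper and lower bounds for the critical value of $q_n$, and both are short deductions from Theorem~\ref{th:sub} together with the extremal results of this section. I would dispose of (ii) first: given $q_n\gg n^{-d/m_{d,r}(2d-r+1)}$, set $\omega(n):=q_n\,n^{d/m_{d,r}(2d-r+1)}$, so that $\omega(n)\to\infty$ and $q_n=n^{-d/m_{d,r}(2d-r+1)}\omega(n)$. Theorem~\ref{th:sub}(iii) then gives $T=\infty$ with high probability, and since a random set percolates exactly when $T<\infty$, we conclude $P(d,r,n,p_n)=\P_{p_n}(T<\infty)\to 0$.

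For (i) the plan is a first moment bound on the number of sites of $\T_n^d$ still uninfected at a time that does not depend on $n$. Because $r>d$, a site $x$ can be uninfected at time $t$ only if $B_t(x)$ contains a set of uninfected sites that protects $x$; and once $t\ge 2d-r+1$, every such configuration already lies inside $B_{2d-r+1}(x)$ and, by Lemma~\ref{le:sub} together with the canonicity of minimal subcritical protecting configurations, contains at least $m_{d,r}(2d-r+1)$ uninfected sites. Since $B_{2d-r+1}$ has size bounded in terms of $d$ and $r$ only, for each $a\ge 0$ there are only boundedly many protecting configurations of size $m_{d,r}(2d-r+1)+a$ inside $B_{2d-r+1}(x)$, so summing a geometric series in $q_n$ yields
\[
\P_{p_n}\big(x\notin A_{2d-r+1}\big)=O\big(q_n^{\,m_{d,r}(2d-r+1)}\big).
\]
Summing over the $n^d$ sites, the expected number uninfected at time $2d-r+1$ is $O\big(n^{d}q_n^{\,m_{d,r}(2d-r+1)}\big)$, which tends to $0$ once $q_n$ is of smaller order than the critical scale $n^{-d/m_{d,r}(2d-r+1)}$. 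Markov's inequality then gives $A_{2d-r+1}=V(\T_n^d)$ with high probability, so $T\le 2d-r+1<\infty$ and $P(d,r,n,p_n)\to 1$.

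The only step requiring any attention is the estimate $\P_{p_n}(x\notin A_{2d-r+1})=O\big(q_n^{m_{d,r}(2d-r+1)}\big)$, and this is not a genuine obstacle: it is precisely what Lemma~\ref{le:sub} and the subcritical analogue of Theorem~\ref{th:extremal} deliver, and no stability theorem enters because all the relevant $\ell_1$ balls have bounded size. Everything else is routine bookkeeping; in particular, the ranges of $q_n$ in (i) and (ii) are exactly those under which the first moment of the number of minimal protecting (equivalently, minimal closed) configurations vanishes, respectively diverges, which is why the critical probability for percolation under the subcritical $r$-neighbour model on $\T_n^d$ is of order $1-n^{-d/m_{d,r}(2d-r+1)}=1-n^{-d/2^{2d-r+1}}$.
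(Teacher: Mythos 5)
Your handling of (ii) is correct and is essentially what the paper intends: setting $\omega(n)=q_n n^{d/m_{d,r}(2d-r+1)}$ puts $q_n$ in the range covered by Theorem~\ref{th:sub}(iii), giving $T=\infty$ with high probability and hence non-percolation.

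The first-moment argument for (i) is a reasonable alternative route, but it only yields $P(d,r,n,p_n)\to 1$ when $q_n=o\big(n^{-d/m_{d,r}(2d-r+1)}\big)$. The corollary is stated with the weaker hypothesis $q_n=O\big(n^{-d/m_{d,r}(2d-r+1)}\big)$, and in the borderline case $q_n\sim c\,n^{-d/m_{d,r}(2d-r+1)}$ the expected number of uninfected sites at time $2d-r+1$ is $\Theta(1)$, so Markov's inequality gives nothing. This is not a fixable gap in your argument: the conclusion itself fails at the critical scale. Writing $m=2d-r+1$, any translate or coordinate relabelling of the cube $\{0,1\}^m\times\{0\}^{r-d-1}$ is closed under the $r$-neighbour rule, since each of its sites has exactly $2d-m=r-1<r$ neighbours outside the cube; with $q_n\sim c\,n^{-d/2^m}$ the expected number of such cubes that are entirely uninfected is $\binom{d}{m}n^d q_n^{2^m}\to\binom{d}{m}c^{2^m}>0$, and a Stein--Chen or second-moment estimate (overlapping cubes contribute only $O(n^d q_n^{2^m+1})=O(q_n)=o(1)$) gives a limiting probability bounded away from zero that some such cube is empty, hence that percolation fails. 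So $P(d,r,n,p_n)$ tends to a constant strictly between $0$ and $1$ at the critical scale, and part (i) of the corollary should read $o$ rather than $O$; for the same reason the case $t=2d-r+1$ should be excluded from Theorem~\ref{th:sub}(ii). Your proof correctly covers the range where the conclusion holds, but you assert the stated $O$-version without flagging this discrepancy, which is the one point requiring correction.
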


\bibliographystyle{amsplain}
\bibliography{../bprefs}

\end{document}